\newtheorem{step}{Step}
\newtheorem{question}{Question}[section]
\newtheorem{theorem}{Theorem}[section]
\newtheorem{proposition}[theorem]{Proposition}
\newtheorem{corollary}[theorem]{Corollary}
\newtheorem{lemma}[theorem]{Lemma}
\theoremstyle{definition}
\newtheorem{remark}[theorem]{Remark}
\newtheorem{definition}[theorem]{Definition}
\newtheorem{example}[theorem]{Example}
\numberwithin{equation}{section}
\begin{document}

\title[Parameter-dependent analyticity theorems]{Analyticity theorems for parameter-dependent plurisubharmonic functions}

\author{Bojie He}
\address{School of Mathematical Sciences\\
Peking University\\
 No.5 Yiheyuan Road Haidian District\\
100871, Beijing\\
China}
\email{hbj@amss.ac.cn}


\begin{abstract}
  In this paper, we first show that a union of upper-level
  sets associated to fibrewise Lelong numbers of plurisubharmonic
  functions is in general a pluripolar subset. Then we obtain analyticity theorems for a union of sub-level
  sets associated to fibrewise complex singularity exponents of some special (quasi-)plurisubharmonic
  functions. As a corollary, we confirm that, under certain conditions, the logarithmic poles of relative Bergman kernels form an analytic subset when the (quasi-)plurisubharmonic weight function has analytic singularities. In the end, we give counterexamples to show that the
  aforementioned sets are in general non-analytic even if the plurisubharmonic function is supposed to be continuous.
\end{abstract}

\maketitle

\section{Introduction and main results}
\subsection{Background: analyticity theorems associated to Lelong numbers}
Plurisubharmonic functions play crucial roles in the study of several complex
variables and complex geometry. Let $\varphi = \varphi (z, w)$ be a
plurisubharmonic function defined on a product of two unit polydiscs
$\Delta^n_z \times \Delta^m_w$ with coordinates $z = (z_1, \ldots, z_n)$ and
$w = (w_1, \ldots, w_m)$, $c > 0$ be any real number. Let us consider the
following two types of upper-level sets of $\varphi$ defined by
\[ E_c (\varphi) := \{ (z, w) \in \Delta^n_z \times \Delta^m_w ; \nu_{(z,
   w)} (\varphi) \geq c \} \]
and
\[ X_c (\varphi) : = \{ (z, w) \in \Delta^n_z \times \Delta^m_w ; \nu_z
   (\varphi_w) \geq c \} . \]
Here $\nu_x (\varphi)$ denotes the Lelong number (cf. Definition \ref{def21})
of $\varphi$ at the point $x$ and $\varphi_{w_0} := \varphi
(\cdot, w_0)$. By the basic restriction property of Lelong numbers, it is clear that $E_c (\varphi)$ is contained in $X_c (\varphi)$.

The celebrated Siu's analyticity theorem (see {\cite{siu2}}) tells us that $E_c
(\varphi)$ is an analytic subset (cf. Definition \ref{def2.1}) of $\Delta^n_z \times \Delta^m_w$. An
equivalent formulation is that $\nu_{(z,w)} (\varphi)$, viewed as a function of $(z,w)$,
is upper semi-continuous with respect to the analytic Zariski topology. Later,
Demailly (see {\cite{demailly-book-hep}}) and Kiselman (see {\cite{kiselman}}) has applied distinct approximation and attenuation skills to recovering Siu's result.
In {\cite{guan-zhou1}}, the authors have also given a rather new approach to Siu's analyticity theorem.

It is very natural to ask whether $X_c (\varphi)$ becomes an analytic subset
in $\Delta^n_z \times \Delta^m_w$. Unfortunately, counterexamples of
discontinuous plurisubharmonic functions (see Example 4.3 in {\cite{wangxiaoqin}})
actually show that this statement fails in general.

However, if we suppose that $e^{\varphi}$ is locally H{\"o}lder continuous
with respect to $w$, i.e. for any compact $K\subset \Delta^n\times \Delta^m$ and $(z, w_1), (z, w_2) \in K$,
\begin{equation}
  | e^{\varphi (z, w_1)} - e^{\varphi (z, w_2)} | \leq C_K |
  w_1 - w_2 |^{\alpha_K} \label{1.1}
\end{equation}
holds for some $\alpha_K, C_K > 0$, then an analyticity theorem could be obtained as follows.

\begin{theorem}
  \label{se}{{{{{{{(Theorem 4.10 in
  {\cite{wangxiaoqin}})}} }}}}}If $e^{\varphi}$ is locally H{\"o}lder continuous
with respect to $w$, then $X_c
  (\varphi)$ is an analytic subset of $\Delta^n_z \times \Delta^m_w$.
\end{theorem}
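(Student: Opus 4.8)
The plan is to reduce the statement to Siu's analyticity theorem by exploiting the H{\"o}lder hypothesis to realise the fibrewise Lelong number $\nu_z(\varphi_w)$ as an honest Lelong number of a plurisubharmonic function on a product of polydiscs, where semicontinuity is already available.

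Analyticity being local, fix $(z_0,w_0)\in X_c(\varphi)$ and a compact polydisc $K$ around it with H{\"o}lder data $\alpha=\alpha_K$, $C=C_K$ from \eqref{1.1}. The first step is the comparison
\[
\varphi(z,w)\ \le\ \max\!\bigl(\varphi(z,w_0),\ \alpha\log|w-w_0|\bigr)+C''\qquad\text{near }(z_0,w_0),
\]
which follows from $e^{\varphi(z,w)}\le e^{\varphi(z,w_0)}+C|w-w_0|^{\alpha}\le 2\max\bigl(e^{\varphi(z,w_0)},\,C|w-w_0|^{\alpha}\bigr)$. Its right-hand side is a maximum of two plurisubharmonic functions of $(z,w)$, so, using $\nu\bigl(\max(u,v)\bigr)=\min(\nu(u),\nu(v))$, the projection invariance of Lelong numbers (the Lelong number at $(z_0,w_0)$ of $(z,w)\mapsto\varphi(z,w_0)$ is $\nu_{z_0}(\varphi_{w_0})$), and $\nu_{(z_0,w_0)}(\log|w-w_0|)=1$, one obtains
\[
\min\bigl(\nu_z(\varphi_w),\alpha\bigr)\ \le\ \nu_{(z,w)}(\varphi)\ \le\ \nu_z(\varphi_w)\qquad\text{on }K^{\circ},
\]
the upper bound being the trivial restriction inequality. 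In particular, whenever $c\le\alpha_K$ we already get $X_c(\varphi)\cap K^{\circ}=E_c(\varphi)\cap K^{\circ}$, which is analytic by Siu's theorem.

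For a general level $c$ I would absorb the loss of $\alpha$ by a ramified substitution in the parameter. Fix $w_0$ and an integer $k$ with $k\alpha_K\ge c$, and put $\widehat\varphi(z,\eta):=\varphi\bigl(z,w_0+(\eta_1^k,\dots,\eta_m^k)\bigr)$ on $\Delta^n_z\times\Delta^m_\eta$ (after shrinking the $\eta$-polydisc), a plurisubharmonic function whose fibre over $\eta=0$ is $\varphi_{w_0}$. Pushing the H{\"o}lder estimate through the substitution gives $\widehat\varphi(z,\eta)\le\max\bigl(\varphi(z,w_0),\ k\alpha\log|\eta|\bigr)+C'''$ near $(z_0,0)$, hence $\nu_{(z_0,0)}(\widehat\varphi)\ge\min\bigl(\nu_{z_0}(\varphi_{w_0}),k\alpha\bigr)$; together with the restriction bound $\nu_{(z_0,0)}(\widehat\varphi)\le\nu_{z_0}(\varphi_{w_0})$ and $k\alpha\ge c$ this becomes the equivalence $\nu_{z_0}(\varphi_{w_0})\ge c\iff\nu_{(z_0,0)}(\widehat\varphi)\ge c$. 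Therefore $X_c(\varphi)\cap(\Delta^n_z\times\{w_0\})$ is the slice $\{\eta=0\}$ of the analytic set $E_c(\widehat\varphi)$ supplied by Siu's theorem, and a single $k$ works for all $w_0$ in a fixed compact set.

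The main obstacle is to upgrade this fibrewise picture to analyticity of $X_c(\varphi)$ in $(z,w)$ jointly: the sets $E_c(\widehat\varphi)$ depend on the base point $w_0$, and promoting $w_0$ to a free variable collapses the ramification and only recovers $E_c(\varphi)$. I would get around this by carrying out the construction relatively over the parameter. Since $e^{\varphi}$ is H{\"o}lder in $w$, the weights $e^{-2\varphi_w}$ vary continuously on compacta, so H{\"o}rmander's $L^2$ estimates and the Ohsawa--Takegoshi extension theorem can be applied with constants locally independent of $w$; this produces generators of the multiplier ideal sheaves $\mathcal I(\lambda\varphi_w)$ that are holomorphic in $(z,w)$ simultaneously, and their common zero loci, intersected over $\lambda\to\infty$, cut out an analytic subset of $\Delta^n_z\times\Delta^m_w$ which --- via the Skoda--Demailly comparison of vanishing order with Lelong number, made quantitatively sharp by the ramified substitution above --- is exactly $X_c(\varphi)$. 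An equally plausible route imitates Siu's descending induction on dimension for the family $\{\varphi_w\}_w$, successively peeling off the divisorial part of $dd^c\varphi$. In either route the one point where hypothesis \eqref{1.1} is genuinely used --- and where I expect the bulk of the technical work to lie --- is the uniformity in $w$ of these $L^2$ estimates.
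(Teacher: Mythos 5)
Your opening reductions are sound and are the salvageable part of the attempt: the H\"older hypothesis does give $\varphi(z,w)\le\max\bigl(\varphi(z,w_0),\alpha\log|w-w_0|\bigr)+O(1)$ near $(z_0,w_0)$, hence $\min\bigl(\nu_z(\varphi_w),\alpha_K\bigr)\le\nu_{(z,w)}(\varphi)\le\nu_z(\varphi_w)$ on $K$, so that $X_c(\varphi)=E_c(\varphi)$ locally whenever $c\le\alpha_K$; and the ramified substitution correctly converts $\nu_{z_0}(\varphi_{w_0})\ge c$ into a Lelong-number condition for $\widehat\varphi$ at $(z_0,0)$. But for $c>\alpha_K$ everything you actually establish is analyticity of each slice $X_c(\varphi)\cap\{w=w_0\}$ in the $z$-variable, which is immediate anyway from Siu's theorem applied to $\varphi_{w_0}$. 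The entire content of the theorem sits in the step you yourself flag as ``the main obstacle,'' and your last paragraph does not close it: it is a description of what a proof would need, not an argument.

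Concretely, the route you sketch --- produce generators of $\mathcal{I}(\lambda\varphi_w)$ that are holomorphic in $(z,w)$ jointly and intersect their zero loci --- does not follow from ``$L^2$ estimates with constants locally independent of $w$.'' Ohsawa--Takegoshi extends a generator $f\in\mathcal{I}(\lambda\varphi_{w_0})_z$ to some $F(z,w)$ with a global weighted $L^2$ bound, but nothing guarantees that $F(\cdot,w_1)$ generates $\mathcal{I}(\lambda\varphi_{w_1})$ at nearby $w_1$, nor that the common zero set of all such extensions meets $\{w=w_1\}$ exactly in the fibrewise set you want; controlling this restriction-to-nearby-fibres problem is precisely the content of Question~\ref{xiaochou}, which is open in the generality you need (the paper settles it only for $m=1$ and analytic singularities, and even there it requires Proposition~\ref{rest}, the generalized Siu lemma~\ref{zji} and the stability Theorem~\ref{useful}). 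Proposition~\ref{wanmei} together with Lemma~\ref{appr} shows that recasting $X_c(\varphi)$ as an intersection of multiplier-ideal zero loci reduces Theorem~\ref{se} to exactly that open question, so your argument begs the hard point rather than proving it; the second alternative you mention (``imitate Siu's descending induction for the family'') is only a slogan. For comparison: the paper does not reprove Theorem~\ref{se} but cites \cite{wangxiaoqin}, whose proof runs through Kiselman's attenuation of singularities and Kiselman's minimum principle (\cite{kiselman}, \cite{kis}) --- one constructs a single plurisubharmonic function of $(z,w)$ and an auxiliary attenuation parameter whose partial infimum is again plurisubharmonic and encodes $\nu_z(\varphi_w)$, and then invokes Siu's theorem for that function --- a mechanism genuinely different from the $L^2$/multiplier-ideal route you propose, and one that bypasses the open question entirely.
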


One main example of locally H{\"o}lder continuous plurisubharmonic functions
is a plurisubharmonic $\varphi$ with analytic singularities (see (\ref{analyticsing})) or generalized analytic singularities (cf. Example 2.4 in \cite{demailly-kollar}).
More generally, the
authors in {\cite{wangxiaoqin}} imposed a weaker condition on $\varphi$ which is
called ``upper H{\"o}lder regular with respect to $w$'' to get Theorem 1.1. Essentially, their proof of Theorem \ref{se} is based on Kiselman's
attenuation techniques and Kiselman's minimum principle (see {\cite{kiselman}},
{\cite{kis}}).

\subsection{Main results: analyticity theorems associated to complex singularity exponents}
Now we change the subject to be studied into complex singularity exponent (cf. Definition
\ref{def21}), which reflects the
singularities of plurisubharmonic functions in a slightly different way. \ The complex singularity
exponent $c_z (\varphi)$ is also considered as an analogue of the log canonical threshold in
birational geometry.

Let $\varphi$ be a
plurisubharmonic function defined on
$\Delta^n_z \times \Delta^m_w$ . Set 
\[ F_c (\varphi) := \{ (z, w) \in \Delta^n_z \times \Delta^m_w ; c_{(z,
   w)} (\varphi) \leq c \} \]
and
\[ Y_c (\varphi) := \{ (z, w) \in \Delta^n_z \times \Delta^m_w ; c_z
   (\varphi_w) \leq c \} \]
to be the related subset.

Thanks to H{\"o}rmander-Bombieri-Skoda $L^2$ extension theorem (Theorem 4.4.4 in
{\cite{hormanderbook}} or p. 384 in {\cite{demailly-book}}) and the restriction
property of complex singularity exponents, $F_c (\varphi)$ becomes an analytic subset of $\Delta^n_z \times
\Delta^m_w$ and $F_c (\varphi) \subset Y_c (\varphi)$. In general, in view of
the basic fact that complex singularity exponent is the inverse of the Lelong
number in the complex dimension one case, Example 4.3 in {\cite{wangxiaoqin}} also
implies that $Y_c (\varphi)$ may not be an analytic subset of $\Delta^n\times\Delta$ for general
$\varphi$. Nevertheless, we can still prove

\begin{theorem}
  [{{{{{{{Main theorem
  1}}}}{{}}}}}] Let $\varphi$ be a plurisubharmonic function defined on $\Delta_z^n\times\Delta^m_w$, then $Y_c (\varphi)$ is a complete pluripolar subset of
  $\Delta^n_z \times \Delta^m_w$. Furthermore, $X_c (\varphi)$ is a pluripolar
  subset of $\Delta^n_z \times \Delta^m_w$.
  \label{polar}
\end{theorem}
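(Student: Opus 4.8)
The plan is to prove the two assertions in turn, obtaining the pluripolarity of $X_c(\varphi)$ from the complete pluripolarity of $Y_c(\varphi)$ by a pointwise comparison of the two fibrewise invariants (here $\varphi\not\equiv-\infty$, as is part of the definition on the connected polydisc). For the first assertion I would realise $Y_c(\varphi)$ as the exact $-\infty$-locus of a single plurisubharmonic function, namely the logarithm of a relative Bergman kernel. For $c>0$ and $(z,w)\in\Delta^n_z\times\Delta^m_w$ put
\[
  K_c(z,w):=\sup\Bigl\{\,|f(z)|^2:\ f\in\mathcal{O}(\Delta^n_z),\ \int_{\Delta^n_z}|f|^2 e^{-2c\varphi(\cdot,w)}\,d\lambda\le 1\,\Bigr\},
\]
the on-diagonal Bergman kernel of the fibre $\Delta^n_z$ weighted by $e^{-2c\varphi(\cdot,w)}$. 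Since $\varphi$ is locally bounded above, the weight is locally bounded below (uniformly for $w$ in a compact set), so the sub-mean-value inequality makes $K_c$ finite and $\log K_c$ locally bounded above on $\Delta^n_z\times\Delta^m_w$. As $2c\varphi(z,w)$ is plurisubharmonic in $(z,w)$, Berndtsson's theorem on the plurisubharmonic variation of Bergman kernels shows that $\log K_c$ is plurisubharmonic on $\Delta^n_z\times\Delta^m_w$; for a weight as singular as $2c\varphi$ one reduces to the smooth case by exhausting $\varphi$ from above by smooth strictly plurisubharmonic functions on slightly smaller polydiscs.

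Next I would identify $\{K_c=0\}$. If $e^{-2c\varphi(\cdot,w)}\notin L^1$ near $z$, then any $f$ of finite weighted norm must vanish at $z$, so $K_c(z,w)=0$; conversely, if $e^{-2c\varphi(\cdot,w)}\in L^1$ near $z$, the H\"ormander--Bombieri--Skoda $L^2$ extension theorem (\cite{hormanderbook}), applied to the plurisubharmonic weight $c\varphi(\cdot,w)$ on the pseudoconvex polydisc $\Delta^n_z$, yields $f\in\mathcal{O}(\Delta^n_z)$ with $f(z)\ne0$ and finite weighted norm, so $K_c(z,w)>0$. Hence $\{K_c=0\}=\{(z,w):e^{-2c\varphi(\cdot,w)}\notin L^1\text{ near }z\}$. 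By the openness property of complex singularity exponents (Guan--Zhou's solution of the strong openness conjecture), for fixed $z$ the set $\{t>0: e^{-2t\varphi_w}\in L^1\text{ near }z\}$ is open, hence equals $(0,c_z(\varphi_w))$; therefore $e^{-2c\varphi_w}\notin L^1$ near $z$ precisely when $c_z(\varphi_w)\le c$, and so $\{K_c=0\}=Y_c(\varphi)$. Finally $\log K_c\not\equiv-\infty$: by Fubini $\varphi_w\not\equiv-\infty$ for almost every $w$, and then for any $c'>c$ the multiplier ideal sheaf $\mathcal{I}(c'\varphi_w)$ is a nonzero coherent ideal, so its zero variety $\{z: e^{-2c'\varphi_w}\notin L^1\text{ near }z\}$ is a proper analytic subset of $\Delta^n_z$; since the slice $Y_c(\varphi)\cap(\Delta^n_z\times\{w\})$ is contained in it, $Y_c(\varphi)\ne\Delta^n_z\times\Delta^m_w$ and $K_c>0$ somewhere. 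Thus $Y_c(\varphi)=\{\log K_c=-\infty\}$ with $\log K_c$ plurisubharmonic and not identically $-\infty$, which is exactly the assertion that $Y_c(\varphi)$ is complete pluripolar in $\Delta^n_z\times\Delta^m_w$.

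For the second assertion I would compare invariants fibrewise. If $\psi\not\equiv-\infty$ is plurisubharmonic on a domain in $\mathbb{C}^n$ and $\nu:=\nu_x(\psi)>0$, then the definition of the Lelong number gives $\psi(\zeta)\le(\nu-\varepsilon)\log|\zeta-x|$ for $\zeta$ near $x$ and every $\varepsilon\in(0,\nu)$, hence $e^{-2t\psi(\zeta)}\ge|\zeta-x|^{-2t(\nu-\varepsilon)}$, which fails to be integrable near $x$ as soon as $t(\nu-\varepsilon)\ge n$; letting $\varepsilon\downarrow0$ gives $c_x(\psi)\le n/\nu_x(\psi)$. Taking $\psi=\varphi_w$ on the fibre $\Delta^n_z$ and $x=z$: if $(z,w)\in X_c(\varphi)$, i.e.\ $\nu_z(\varphi_w)\ge c>0$, then $c_z(\varphi_w)\le n/\nu_z(\varphi_w)\le n/c$, so $(z,w)\in Y_{n/c}(\varphi)$. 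Hence $X_c(\varphi)\subseteq Y_{n/c}(\varphi)$, and a subset of a complete pluripolar set is pluripolar, so $X_c(\varphi)$ is pluripolar, as claimed.

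The main difficulties lie in two places. The first is having Berndtsson's variation theorem available for the genuinely singular weight $e^{-2c\varphi}$; this is expected but needs an approximation argument with control on the limiting Bergman kernels. The second, and the essential one, is that upgrading the easy inclusion $Y_c(\varphi)\subseteq\{\log K_c=-\infty\}$ (which needs only H\"ormander--Bombieri--Skoda) to an equality --- equivalently, to \emph{complete} pluripolarity rather than mere pluripolarity --- uses the openness property of complex singularity exponents, i.e.\ that the critical exponent is never attained. The remaining inputs, namely the Fubini reduction, the coherence of multiplier ideal sheaves, and the elementary fibrewise Lelong-number estimate, are routine.
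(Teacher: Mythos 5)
Your argument for the first assertion is essentially the paper's: both identify $Y_c(\varphi)$ with the $-\infty$-locus of $\log K_{c\varphi}$ via H\"ormander--Bombieri--Skoda together with the openness property (this is the paper's Proposition \ref{213} and equation (\ref{2626})), and both then invoke Berndtsson's plurisubharmonic variation theorem (Theorem \ref{2.4}) to conclude complete pluripolarity; your extra care about $\log K_{c\varphi}\not\equiv-\infty$ and about the singular-weight case of Berndtsson's theorem is welcome but not a new idea. For the second assertion you take a genuinely shorter route. The paper passes through the Guan--Zhou approximation (Lemma \ref{appr} and Proposition \ref{wanmei}): it writes $X_c(\varphi)=\bigcap_k\bigl(Y_{c^{-1}}((n+k)\varphi_k)\cap\{\xi=o\}\bigr)$ on an enlarged parameter space and then argues that a single term of this intersection is a proper complete pluripolar set containing $X_c(\varphi)$. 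You instead use the classical Skoda inequality $c_x(\psi)\le n/\nu_x(\psi)$ fibrewise to get the direct inclusion $X_c(\varphi)\subseteq Y_{n/c}(\varphi)$, and then quote the first assertion. These are in fact the same containment --- your inclusion is precisely the $k=0$ term $Y_{c^{-1}}(n\varphi)=Y_{n/c}(\varphi)$ of the paper's intersection --- but your derivation avoids the approximation machinery entirely and is the more economical way to obtain mere pluripolarity. What the paper's formulation buys in exchange is the full countable-intersection description of $X_c(\varphi)$ by sub-level sets of singularity exponents, which is the structure it wants on record for the discussion surrounding Question \ref{xiaochou} and Theorem \ref{se}; for the statement of Theorem \ref{polar} itself your argument is complete and correct (including the degenerate cases $\varphi_w\equiv-\infty$ or $\nu_z(\varphi_w)=+\infty$, where $c_z(\varphi_w)=0$ and the inclusion holds trivially).
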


Let us recall that a set is called {{pluripolar}} if locally it is
contained in a subset where a locally integrable plurisubharmonic function
equals $- \infty$; a set is called {{complete pluripolar}} if locally
it is exactly the set of points where a locally integrable plurisubharmonic
function equals $- \infty$ (see Definition \ref{def2.1}). Moreover, in comparison with Theorem \ref{se}, it is also natural to ask

\begin{question}\label{xiaochou}
  \label{ss}If $e^{\varphi}$ is locally H{\"o}lder continuous with respect to
  $w$, will $Y_c (\varphi)$ become an analytic subset?
\end{question}

Before stating our main result, it is worthy to mention the following
consequence which was first obtained by Varchenko ({\cite{varchenko}}), and then proved via an analytic approach by
Demailly and Koll{\'a}r ({\cite{demailly-kollar}}):

\begin{theorem}
  \label{dk}
  Let $\varphi$ be a plurisubharmonic function defined on $\Delta_z^n\times\Delta^m_w$. Then
  \begin{enumerate}
    \item \label{jj}{{(Lemma 3.2 in {\cite{demailly-kollar}})}} Suppose
    that $e^{\varphi}$ is {{H{\"o}lder }}continuous with respect to $w$
    on $\Delta^n_z \times \Delta^m_w$, then for any $z_0 \in \Delta^n$, $\{ z
    = z_0 \} \bigcap Y_c (\varphi)$ is a closed subset of $\Delta^m$.
    
    \item \label{b}{{({\cite{varchenko}} or Theorem 3.1 in
    {\cite{demailly-kollar}})}} If $\varphi = \log | f | 
    $ for some $f \in \mathcal{O} (\Delta^n_z \times \Delta^m_w)$,
    then for any $z_0 \in \Delta^n$, $\{ z = z_0 \} \bigcap Y_c (\varphi)$ is an
    analytic subset of $\Delta^m$.
  \end{enumerate}
\end{theorem}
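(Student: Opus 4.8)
The plan is to deduce both parts of Theorem~\ref{dk} from the single observation that, for fixed $z_0\in\Delta^n$,
\[
\{z=z_0\}\cap Y_c(\varphi)=\bigl\{\,w\in\Delta^m:\ c_{z_0}(\varphi_w)\le c\,\bigr\},
\]
so that part~(1) amounts to the lower semicontinuity of the function $w\mapsto c_{z_0}(\varphi_w)$ on $\Delta^m$, while part~(2) asks for the stronger statement that its sub-level sets are analytic.

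For part~(1) I would fix $w_*\in\Delta^m$ and assume $\varphi_{w_*}\not\equiv-\infty$ (otherwise $c_{z_0}(\varphi_{w_*})=0$ and $w_*$ already lies in the set). H\"older continuity of $e^{\varphi}$ in $w$ forces $e^{\varphi_w}\to e^{\varphi_{w_*}}$ locally uniformly on $\Delta^n$ as $w\to w_*$. The $\varphi_w$ are plurisubharmonic, locally uniformly bounded above (because $\varphi$ is plurisubharmonic on the product), and do not escape to $-\infty$ (at any point where $\varphi_{w_*}$ is finite, $e^{\varphi_w}\to e^{\varphi_{w_*}}>0$); so by the compactness properties of plurisubharmonic families $\{\varphi_w\}$, for $w$ near $w_*$, is relatively compact in $L^1_{\mathrm{loc}}(\Delta^n)$, and any subsequential limit $\psi$ (passing to a further subsequence, $\varphi_{w_j}\to\psi$ almost everywhere) satisfies $e^{\psi}=e^{\varphi_{w_*}}$ a.e., hence $\psi=\varphi_{w_*}$. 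Thus $\varphi_w\to\varphi_{w_*}$ in $L^1_{\mathrm{loc}}(\Delta^n)$ as $w\to w_*$. Now I would invoke the semicontinuity theorem of Demailly--Koll\'ar \cite{demailly-kollar}: the functional $\psi\mapsto c_{z_0}(\psi)$ is lower semicontinuous on $\mathrm{PSH}(\Delta^n)$ for the $L^1_{\mathrm{loc}}$-topology. This yields $\liminf_{w\to w_*}c_{z_0}(\varphi_w)\ge c_{z_0}(\varphi_{w_*})$, i.e. $\{w:c_{z_0}(\varphi_w)>c\}$ is open, which is exactly the closedness claimed in part~(1).

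For part~(2), write $\varphi=\log|f|$, so $e^{\varphi}=|f|$ is locally Lipschitz in $w$; part~(1) then already gives that $S:=\{z=z_0\}\cap Y_c(\varphi)$ is closed in $\Delta^m$, and it remains to prove $S$ analytic. Away from the analytic set $\{w:f_w\equiv 0\}$ (on which $c_{z_0}(\varphi_w)=0$) one has $c_{z_0}(\varphi_w)=\operatorname{lct}_{z_0}(f_w)$, the log canonical threshold. I would take an embedded resolution $\mu\colon \widetilde Z\to\Delta^n_z\times\Delta^m_w$ of the divisor $\{f=0\}$, and write $\mu^{*}\operatorname{div}(f)=\sum_i b_iE_i$ and $K_{\widetilde Z/(\Delta^n\times\Delta^m)}=\sum_i a_iE_i$. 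For $w$ outside a proper analytic subset of the base (the locus where some slice $\Delta^n\times\{w\}$ fails to meet the $E_i$ transversally, or meets the image of a component $E_i$ contained in a fibre), the restriction of $\mu$ over $\Delta^n\times\{w\}$ is a log resolution of $(\Delta^n,\operatorname{div}(f_w))$, so $\operatorname{lct}_{z_0}(f_w)=\min_i\tfrac{a_i+1}{b_i}$ over the finitely many $i$ with $z_0\in\mu\bigl(E_i\cap\mu^{-1}(\Delta^n\times\{w\})\bigr)$; in particular this value is constant on the generic stratum. A Noetherian induction over analytic strata of $\Delta^m$ then shows that $w\mapsto\operatorname{lct}_{z_0}(f_w)$ takes only finitely many values locally, with locally analytically constructible level sets, whence $S$ is closed and locally constructible, hence analytic. (Alternatively, Demailly and Koll\'ar \cite{demailly-kollar} give a purely analytic proof of this step by iterating the Ohsawa--Takegoshi extension theorem to locate the jumping numbers.)

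The main difficulties are the following. In part~(1) the genuinely deep input is the Demailly--Koll\'ar semicontinuity theorem, whose proof rests on the Ohsawa--Takegoshi extension theorem; without it the required sub-level estimate near the singular locus of $\varphi_{w_*}$ fails for $n\ge2$, the reduction of the $L^1_{\mathrm{loc}}$-convergence statement to it being routine. In part~(2) the crux is the transversality/base-change step relating the resolution of the total space to the resolutions of the fibres---an inversion-of-adjunction type phenomenon---together with the bookkeeping of the Noetherian induction; equivalently, in the analytic approach, the point is that the fibrewise multiplier ideal $\mathcal I(c\log|f_w|)$ is in general strictly smaller than the restriction to $\Delta^n\times\{w\}$ of the relative multiplier ideal $\mathcal I(c\log|f|)$, and it is precisely this defect that makes Varchenko's theorem nontrivial.
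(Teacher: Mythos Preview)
The paper does not give its own proof of Theorem~\ref{dk}; it is quoted as a known result from \cite{varchenko} and \cite{demailly-kollar}, and the paper merely remarks that the referenced proof of part~(\ref{jj}) is ``based on the $L^2$ extension theorem and an idea of Angehrn and Siu'', and that part~(\ref{b}) ``is essentially reduced to considering a one-dimensional base by a stratification process''. So there is no in-paper proof to compare against, only this brief description of the cited arguments.

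Your proposal is correct and aligns with those descriptions. For part~(\ref{jj}), your reduction---H\"older continuity of $e^{\varphi}$ in $w$ forces $\varphi_w\to\varphi_{w_*}$ in $L^1_{\mathrm{loc}}(\Delta^n)$, after which the Demailly--Koll\'ar lower semicontinuity of $c_x(\cdot)$ in the $L^1_{\mathrm{loc}}$ topology applies---is exactly the same circle of ideas, the deep input being the Ohsawa--Takegoshi theorem underlying that semicontinuity result; you are just packaging the Angehrn--Siu step inside the general $L^1_{\mathrm{loc}}$ statement rather than redoing it ad hoc for the family $\{\varphi_w\}$. For part~(\ref{b}), your log-resolution plus generic-transversality plus Noetherian-induction argument is precisely the ``stratification process'' the paper alludes to, and your alternative remark about the analytic proof via iterated Ohsawa--Takegoshi is also faithful to \cite{demailly-kollar}.
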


\begin{remark}
 Analyticity of a set $E$ along both vertical and horizontal
  directions doesn't mean the whole analyticity of $E$. A counterexample is $$E
  : = \{ ( n^{- 2} {, n^{- 2}}  ) ; n \in \mathbb{N} \} 
  \bigcup \{(0, 0)\}.$$ Therefore, in the case of $(\ref{b})$, we cannot obtain the analyticity of $Y_c (\varphi)$ directly.
\end{remark}

It will be easy to check that in (\ref{jj}) H{\"o}lder continuity along the
$w$-base direction is enough for their proof. Besides, their proof is based on
the $L^2$ extension theorem ({\cite{ohsawa-takegoshi}}) and an idea of Angehrn
and Siu ({\cite{as}}). Note (\ref{jj}) provides a new proof of the
semi-continuity Theorem \ref{dk} (\ref{b}) (cf. section 3 in
{\cite{demailly-kollar}}).

Since (\ref{jj}) is only concerned with the closeness of $Y_c (\varphi)$ along
the base direction (note analyticity along the $z$-direction is obvious!),
it will be intuitive for us to speculate the whole analyticity of $Y_c
(\varphi)$. The following result gives an affirmative answer to Question \ref{xiaochou} in the special case that $m=1$ and $\varphi$ has analytic singularities.

\begin{theorem}
 [{{{Main theorem 2}}}] Let $\varphi$ be a plurisubharmonic function defined on $\Delta_z^n\times\Delta_w$ \emph{(i.e. $m=1$)}. Assume moreover that $\varphi$ has analytic singularities, then $Y_c (\varphi)$ is an analytic subset.
   \label{mainresult}
\end{theorem}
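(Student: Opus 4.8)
\emph{Proof strategy.} Analyticity being a local property, I would fix a point $(z_0,w_0)$ and pass to a polydisc neighbourhood $U\times V\subset\Delta^n_z\times\Delta_w$ on which the analytic singularities of $\varphi$ are realized, say $\varphi=\frac{\alpha}{2}\log(|f_1|^2+\cdots+|f_N|^2)+v$ with $\alpha>0$, $f_k\in\mathcal O(U\times V)$ and $v$ bounded; after rescaling, assume $V=\Delta$. Since $v$ is bounded, $e^{-sv}$ is bounded above and below by positive constants for each $s>0$, so for all $z\in U$, $w\in\Delta$ the complex singularity exponent $c_z(\varphi_w)$ coincides, up to a positive factor depending only on $\alpha$, with the log canonical threshold $\mathrm{lct}_z(\mathcal A_w)$ of the restricted ideal $\mathcal A_w:=\mathcal A\cdot\mathcal O_{U\times\{w\}}$, where $\mathcal A:=(f_1,\dots,f_N)\subset\mathcal O_{U\times\Delta}$. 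Hence, on $U\times\Delta$, $Y_c(\varphi)=\{(z,w):\mathrm{lct}_z(\mathcal A_w)\le\lambda\}$ for some constant $\lambda>0$, and the problem becomes a statement about the coherent ideal $\mathcal A$. This is exactly where the analytic-singularities hypothesis is indispensable: for a general plurisubharmonic weight one cannot reduce to an ideal, and indeed $Y_c(\varphi)$ need not be analytic then.

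I would then combine a log resolution with the one-dimensionality of the base. Choose a log resolution $\pi\colon X\to U\times\Delta$ of $\mathcal A$ (shrinking $U$ if needed), with prime divisors $E_i$, $i\in I$, such that $\pi^*\mathcal A=\mathcal O_X(-\sum_i b_iE_i)$, $K_{X/U\times\Delta}=\sum_i a_iE_i$ ($a_i\ge0$), and $\sum_iE_i$ has simple normal crossings; set $Z_i:=\pi(E_i)$, an irreducible analytic set. As $\Delta$ is a curve, only finitely many parameters are bad: let $S=\{w_1,\dots,w_k\}\subset\Delta$ be the finite set of $w$ for which some $Z_i$ lies in a slice $U\times\{w\}$, or $\pi^{-1}(U\times\{w\})$ is not a smooth divisor meeting $\sum_iE_i$ transversally, or $U\times\{w\}\subset V(\mathcal A)$. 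For $w\notin S$, adjunction (the product slices having trivial normal bundle) shows that $\pi$ restricts to a log resolution of $\mathcal A_w$ in which each $E_i$ still contributes the numbers $a_i,b_i$, whence
\[
\mathrm{lct}_z(\mathcal A_w)=\min\{(a_i+1)/b_i:\ (z,w)\in Z_i,\ E_i\ \mathrm{horizontal},\ b_i>0\}.
\]
Putting $I^{\circ}:=\{i:\ E_i\ \mathrm{horizontal},\ b_i>0,\ (a_i+1)/b_i\le\lambda\}$ and $\widetilde Y:=\bigcup_{i\in I^{\circ}}Z_i$ — an analytic subset of $U\times\Delta$ — this gives $Y_c(\varphi)\cap(U\times(\Delta\setminus S))=\widetilde Y\cap(U\times(\Delta\setminus S))$.

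Finally I would treat the $k$ special slices. For each $j$, $Y_c(\varphi)\cap\{w=w_j\}=\{z\in U:\mathrm{lct}_z(\mathcal A_{w_j})\le\lambda\}$ is an analytic subset of $U$, since via a log resolution of the ideal $\mathcal A_{w_j}$ it is a finite union of images of exceptional divisors; call it $T_j$ (also analytic in $U\times\Delta$). Next, $\widetilde Y\subset Y_c(\varphi)$: for $(z,w)\in Z_i$ with $i\in I^{\circ}$ this is clear when $w\notin S$, and for $w=w_j\in S$ the restriction (semicontinuity) property of log canonical thresholds yields $\mathrm{lct}_z(\mathcal A_{w_j})\le\mathrm{lct}_{(z,w_j)}(\mathcal A)\le(a_i+1)/b_i\le\lambda$. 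Putting the pieces together, $Y_c(\varphi)=\widetilde Y\cup\bigcup_{j=1}^{k}T_j$ on $U\times\Delta$, a finite union of analytic sets, hence analytic. (Alternatively, $\widetilde Y\subset Y_c(\varphi)$ can be obtained by first proving that $Y_c(\varphi)$ is closed — a consequence of the effective semicontinuity theorem of Demailly--Koll\'ar together with the local H\"older continuity of $e^{\varphi}$ in $w$ afforded by analytic singularities — and then observing that no component of $\widetilde Y$ lies in a special slice, so $\widetilde Y=\overline{\widetilde Y\setminus\bigcup_j\{w=w_j\}}\subset Y_c(\varphi)$.)

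The crux of the argument is the behaviour across the finitely many special parameters $w_1,\dots,w_k$: one must ensure that the single globally chosen resolution $\pi$ still bounds $\mathrm{lct}_z(\mathcal A_w)$ from above on those slices, and that the extra points of $Y_c(\varphi)$ appearing there assemble into an analytic set. The one-dimensionality of the base — which forces $S$ to be finite — and the restriction property of log canonical thresholds are precisely what make this possible; when $m\ge2$ the bad locus becomes a positive-dimensional proper subset of the base, $Y_c(\varphi)$ turns into an uncountable union of slicewise-analytic sets, and the method no longer applies, which is why Question \ref{xiaochou} stays open in general.
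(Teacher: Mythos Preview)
Your argument is correct, but it follows a genuinely different route from the paper's own proof of this theorem. The paper deliberately gives a purely analytic proof that avoids resolution of singularities: it shows that the germ $(Y_c(\varphi),o)$ coincides with the germ of the common zero set
\[
I_2=\bigcap_{F}\,F^{-1}(0),\qquad F\in\bigcup_{c'>c}\mathcal H_{\Delta^n\times\Delta}\bigl(c'\varphi+\log|w|+\log(-\log|w|)\bigr),
\]
i.e.\ with the support of $\mathcal O/\mathrm{Adj}_{\{w=w_0\}}(c\varphi)$. One inclusion comes from Ohsawa--Takegoshi $L^2$ extension; the delicate inclusion $(I_1,o)\subset(I_2,o)$ is obtained by Guenancia's restriction technique (Proposition~\ref{rest}) together with the holomorphic stability Theorem~\ref{useful}, an enhancement of Phong--Sturm's one-parameter stability that uses the generalized Siu lemma of Zhou--Zhu. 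Your approach, by contrast, is the local birational one: reduce to an ideal $\mathcal A$, take a log resolution, use the one-dimensionality of the base to make the non-generic locus $S\subset\Delta$ finite, compute $\mathrm{lct}_z(\mathcal A_w)$ from the fixed resolution for $w\notin S$, and patch in the finitely many special slices via the restriction inequality for log canonical thresholds. This is essentially the local-analytic analogue of the paper's proof of Theorem~\ref{third}, which the paper explicitly contrasts with the present argument.

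What each buys: the paper's $L^2$ proof identifies concrete defining functions for $Y_c(\varphi)$ (those of the analytic adjoint ideal along $\{w=w_0\}$, cf.\ Remark~\ref{jinaj}) and does not invoke Hironaka; your proof is shorter and more geometric, needs only the standard package (log resolution, adjunction, generic smoothness, restriction of lct), and makes completely transparent why $m=1$ is essential --- the bad parameter set becomes finite rather than a positive-dimensional stratum. One technical point you should make explicit when writing this up is the finiteness of $S$ after shrinking: it rests on the fact that a proper modification $\pi$ has only finitely many exceptional primes over a relatively compact polydisc, and that for each horizontal $E_i$ (and each intersection $E_i\cap E_j$, etc.) the induced map to $\Delta$ has discrete critical values; this is routine but deserves a sentence.
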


It is clear that Theorem \ref{mainresult} implies (\ref{b}) in
Theorem \ref{dk} in the case when $m=1$ immediately. Note the proof of (\ref{b}) in Theorem \ref{dk} is essentially reduced to considering an one-dimensional base by a stratification process (cf. \cite{demailly-kollar}).

The proof of Theorem \ref{mainresult} is obtained via an analytic method, which could precisely point out
the defining functions of the analytic subset $Y_c (\varphi)$ when
$m = 1$ (cf. Remark \ref{jinaj}). The method will be a perfect combination of the $L^2$ extension theorem
({\cite{ohsawa-takegoshi}}), the restriction techniques occurred in
{\cite{guenancia2}}, the generalized Siu's lemma (cf. {\cite{zhouzhu2}},
{\cite{zhouzhu}}) and an improvement of the stability of integrals of the following forms ($\delta > 0$)
$$
  \int \frac{| f (z, w) |^2  }{(| g_1 (z, w) |^2 + \cdots
  + | g_N (z, w) |^2)^{\delta / 2}   } d V (z)
$$
on $\Delta^n_z \times \Delta _w$ as discussed in {\cite{phongsturm}}. Our
improvement (cf. Theorem \ref{useful}) is not merely considering essentially the $f \equiv 1$ case as in
{\cite{phongsturm}}; instead, we replace the original plurisubharmonicity condition on $-\log R$ by a new integrable condition (\ref{219}).

Furthermore, following an algebraic approach, we can also establish a parameter dependent analyticity theorem associated to \emph{proper} holomorphic surjective submersions, which in particular asserts that triviality of restricted multiplier ideals is an open condition in the analytic Zariski topology under certain assumptions.
\begin{theorem}[{{Main theorem 3}}]\label{third}
Let $X$ be a projective manifold, $Y$ be a compact complex manifold and $f:X\rightarrow Y$ be a surjective holomorphic submersion of relative dimension $n$. Let $\varphi$ be a quasi-psh function on $X$ with analytic singularities. Then
$$
Z_c (\varphi):=\bigcup_{y\in Y}\{p\in X_y;c_{p}(\varphi |_{X_y})\leq c\}
$$
is an analytic subset (\emph{i.e.} a finite union of algebraic subvarieties via Serre's \emph{GAGA} principle) of $X$. In particular, if moreover $\mathcal{I}(\varphi|_{X_{y_0}})=\mathcal{O}_{X_{y_0}}$ for some $y_0 \in Y$, then 
\begin{equation}\label{sulia}
\mathcal{I}(\varphi)|_{X_y}=\mathcal{I}(\varphi |_{X_y})=\mathcal{O}_{X_y}
\end{equation}
holds for general $y\in Y$ (i.e. outside some nowhere dense analytic subset in $Y$), where the multiplier ideal sheaf $\mathcal{I}(\varphi)$ is defined as in \emph{(\ref{wocaof})}.
\end{theorem}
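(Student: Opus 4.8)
The plan is to use the algebraicity of analytic singularities to reduce the problem to the variation of log canonical thresholds along the smooth family $f$, to describe $Z_c(\varphi)$ generically by one log resolution, to fill in the exceptional fibres by induction on $\dim Y$, and finally to upgrade the resulting constructibility to genuine analyticity by a semicontinuity ($L^2$-extension) argument. Write $\varphi=\frac{c_0}{2}\log|\mathcal{J}|^2+O(1)$ locally for a globally defined coherent ideal $\mathcal{J}\subset\mathcal{O}_X$ and a constant $c_0>0$ coming from the analytic singularities; then $\varphi|_{X_y}$ has analytic singularities described by $\mathcal{J}_y:=\mathcal{J}\cdot\mathcal{O}_{X_y}$ and $c_p(\varphi|_{X_y})=c_0^{-1}\mathrm{lct}_p(X_y,\mathcal{J}_y)$, so it suffices to study log canonical thresholds of $\mathcal{J}$ along $f$. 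Fix a log resolution $\pi\colon\widetilde{X}\to X$ of $(X,\mathcal{J})$ with $\mathcal{J}\cdot\mathcal{O}_{\widetilde{X}}=\mathcal{O}_{\widetilde{X}}(-\sum a_iE_i)$, $K_{\widetilde{X}/X}=\sum k_iE_i$, and $\sum E_i$ simple normal crossings, and write $\widetilde{X}_y$ for the fibre over $y$ of $f\circ\pi$. By generic smoothness (the set of critical values of a proper holomorphic map being analytic and nowhere dense), applied to $\widetilde{X}\to Y$, to $E_i\to Y$, and to the intersections of the $E_i$, there is a nowhere dense analytic $W\subset Y$ such that for $y\notin W$ the map $\pi$ restricts to a log resolution of $(X_y,\mathcal{J}_y)$ with boundary $\sum(E_i\cap\widetilde{X}_y)$ and, by adjunction, $K_{\widetilde{X}_y/X_y}=\sum k_i(E_i\cap\widetilde{X}_y)$; hence $\mathrm{lct}_p(X_y,\mathcal{J}_y)=\min\{(k_i+1)/a_i:\,p\in\pi(E_i\cap\widetilde{X}_y)\}$ for such $y$. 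Putting $B_c:=\{i:(k_i+1)/a_i\le c_0 c\}$ and $V:=\pi(\bigcup_{i\in B_c}E_i)$, a closed analytic (by GAGA, algebraic) subset of $X$ since $\pi$ is proper, we obtain $Z_c(\varphi)\cap f^{-1}(Y\setminus W)=V\cap f^{-1}(Y\setminus W)$.

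To treat the fibres over $W$, I would argue by induction on $\dim Y$, proving the more flexible assertion that $Z_c$ is a closed analytic subset of the total space for \emph{every} proper holomorphic submersion of complex manifolds and every weight with analytic singularities; the base case $\dim Y=0$ is the classical analyticity of the log canonical threshold locus of an ideal. For the inductive step, let $\nu\colon W'\to W\hookrightarrow Y$ resolve the compact analytic set $W$, so $W'$ is a compact complex manifold, $\nu$ is proper with image $W$, and $\dim W'<\dim Y$. Base change yields a proper holomorphic submersion $f'\colon X':=X\times_Y W'\to W'$ with $X'$ a (compact) complex manifold, a proper map $\nu_X\colon X'\to X$ with image $f^{-1}(W)$, and a pulled-back weight $\varphi'$ still with analytic singularities; since $\nu$ is surjective onto $W$ and $X'_w\cong X_{\nu(w)}$ compatibly with the weights, one has $\nu_X(Z_c(\varphi'))=Z_c(\varphi)\cap f^{-1}(W)$. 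By the inductive hypothesis $Z_c(\varphi')$ is a closed analytic subset of $X'$, whence, by Remmert's proper mapping theorem, $Z_c(\varphi)\cap f^{-1}(W)$ is a closed analytic subset of $X$. Combined with the first paragraph, this shows $Z_c(\varphi)$ is a \emph{constructible} subset of $X$.

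It then remains to prove that $Z_c(\varphi)$ is closed in the Euclidean topology, for a constructible Euclidean-closed set has Zariski closure equal to its Euclidean closure, hence is an algebraic subvariety, and so $Z_c(\varphi)$ is in particular an analytic subset (GAGA being available since $X$ is projective); the constructibility is what pins down, near $f^{-1}(W)$, exactly which components of $V$ survive. I expect this closedness to be the main obstacle, since the gluing above only yields constructibility and $V$ need not describe $Z_c(\varphi)$ over $f^{-1}(W)$: genuine closedness must be imported from the lower semicontinuity of $(y,p)\mapsto\mathrm{lct}_p(X_y,\mathcal{J}_y)$ along $f$, equivalently from the openness of $\{(y,p):e^{-c\varphi|_{X_y}}\in L^1_{\mathrm{loc},p}(X_y)\}$. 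The argument I would run: if $e^{-c\varphi|_{X_{y_0}}}$ is integrable near $p_0$ on the fibre $X_{y_0}$, apply the Ohsawa--Takegoshi extension theorem \cite{ohsawa-takegoshi} to extend the constant $1$ to a holomorphic $F$ near $p_0$ in $X$ with $F|_{X_{y_0}}\equiv1$ near $p_0$ and $\int|F|^2e^{-c\varphi}<\infty$ locally; since $F(p_0)=1$, $e^{-c\varphi}$ is integrable on a full neighbourhood of $p_0$ in $X$, and Fubini together with the constructibility already established then forces the threshold to remain $>c$ throughout a neighbourhood of $(y_0,p_0)$. Alternatively one may invoke the Demailly--Koll\'{a}r semicontinuity theorem \cite{demailly-kollar} (or the stability estimates of \cite{phongsturm}) in the same role.

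Finally, for the multiplier ideal statement (we may assume $Y$ connected): $\mathcal{I}(\varphi|_{X_{y_0}})=\mathcal{O}_{X_{y_0}}$ holds precisely when $c_p(\varphi|_{X_{y_0}})>1$ for every $p\in X_{y_0}$, i.e. when $X_{y_0}\cap Z_1(\varphi)=\emptyset$. As $X$ is compact, $f$ is proper, so $f(Z_1(\varphi))$ is a closed analytic subset of $Y$ by Remmert; it avoids $y_0$, hence is a proper, nowhere dense subset. For $y\notin f(Z_1(\varphi))$ we get $X_y\cap Z_1(\varphi)=\emptyset$, that is $\mathcal{I}(\varphi|_{X_y})=\mathcal{O}_{X_y}$, and the multiplier ideal restriction theorem (again a consequence of Ohsawa--Takegoshi) gives $\mathcal{I}(\varphi|_{X_y})\subseteq\mathcal{I}(\varphi)|_{X_y}\subseteq\mathcal{O}_{X_y}$, so all three agree, which is exactly (\ref{sulia}); moreover the finer equality $\mathcal{I}(\varphi)|_{X_y}=\mathcal{I}(\varphi|_{X_y})$ holds for $y\notin W$ directly from the generic log resolution above.
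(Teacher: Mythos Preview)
Your approach is correct and structurally parallel to the paper's: both run induction on $\dim Y$, use one log resolution to describe $Z_c(\varphi)$ over the generic locus $Y\setminus W$, resolve $W$, base-change, and push forward by Remmert. The difference is in how the generic and special pieces are glued.

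You first obtain constructibility and then upgrade to analyticity by a separate closedness argument (semicontinuity of fibrewise thresholds). The paper bypasses this step by observing that your set $V=\pi\bigl(\bigcup_{i\in B_c}E_i\bigr)$ is exactly the \emph{ambient} threshold locus $F_c(\varphi)=\{x\in X:c_x(\varphi)\le c\}$, and that Ohsawa--Takegoshi (equivalently, the restriction formula $\mathcal{I}(\varphi|_{X_y})\subset\mathcal{I}(\varphi)|_{X_y}$) forces $F_c(\varphi)\subset Z_c(\varphi)$ \emph{everywhere}, not only over $Y\setminus W$. One then has the exact equality of sets
\[
Z_c(\varphi)\;=\;F_c(\varphi)\ \cup\ \bigl(Z_c(\varphi)\cap f^{-1}(W)\bigr),
\]
both pieces already closed analytic (the second by induction), so no closedness argument is needed. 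In your language: the components of $V$ over $f^{-1}(W)$ all \emph{do} survive, and once you notice this your constructible set is visibly a union of two closed analytic sets. Your alternative route via the Demailly--Koll\'ar semicontinuity theorem is valid and yields the same conclusion; the Fubini-plus-constructibility sketch you offer first, however, is not complete as stated, since Fubini controls only almost every fibre. The restriction-formula shortcut is both cleaner and exactly what the paper does. Your treatment of the ``in particular'' clause via $f(Z_1(\varphi))$ and Remmert matches the paper's.
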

\begin{remark}\label{jiaowai1}
It is not hard to see that the first identity in (\ref{sulia}) already holds for general $y\in Y$ under the assumption on $X,Y,f$ and $\varphi$ in the above theorem. We give some details for the reason as follows.

First note that by the main result of \cite{var} (see also Theorem 3.1 in \cite{dem}), $Y$ is K\"{a}hler. Let $L$ be a positive line bundle over $X$. By B. Berndtsson's theorem on positivity of direct images (cf. Theorem 1.2 in \cite{ber}), we moreover see that $Y$ admits a (strictly) Nakano positive vector bundle $f_{\ast}(K_{X/Y}+L)$. Hence $Y$ is projective since $\det f_{\ast}(K_{X/Y}+L)$ is a positive line bundle. Consequently, $f$ is induced by an analytification of a smooth morphism $F:X\rightarrow Y$ (by Serre's GAGA principle). Since $X$ is projective, we see that $\mathcal{I}(\varphi)$ is actually the analytification (which is sometimes called $\cdot^{\textbf{An}}$) of an algebraic coherent sheaf $\mathcal{J}(X,\mathfrak{a}^{c})$ for some $c>0$, where $\mathfrak{a}$ is an integrally closed (algebraic) ideal sheaf corresponding to $\varphi$ (e.g. first fix some $c\in\mathbb {R}_{>0}$ and take finite local generators $\{f_i\}$ of $\varphi/c$, then set $\mathfrak{a}^\textbf{An}\subset \mathcal{O}^\textbf{An}_X$ to be the unique integrally closed ideal sheaf generated by $\{f_i\}$ and $\mathfrak{a}\subset\mathcal{O}_X$ to be the unique algebraic coherent sheaf associated to  $\mathfrak{a}^\textbf{An}$ thanks to GAGA again). Moreover, $\mathfrak{a}|_{X_y}\neq 0$ for general $y\in Y$ because $F$ is proper. Finally, applying Theorem \ref{imp} and Remark \ref{rmk213} to $F:X\rightarrow Y$ and $\mathfrak{a}$ we confirm our claim.
\end{remark}

\begin{remark}\label{supe}
In the case when $f$ is not necessarily a submersion and $\varphi$ has arbitrary singularities, it follows from the proof in \cite{xiamingchen} that the first identity in (\ref{sulia}) holds outside some pluripolar subset in $Y$. 
\end{remark}

Compared with Remark \ref{jiaowai1} and \ref{supe}, the improvement in Theorem \ref{third} here, is that in the special case when $\mathcal{I}(\varphi|_{X_{y_0}})=\mathcal{O}_{X_{y_0}}$ for some $y_0 \in Y$, we precisely point out the set outside which (\ref{sulia}) holds can be taken as $f(Z_1 (\varphi))$, which is an analytic subset in $Y$ according to Remmert's proper mapping theorem. Since the last identity implies the first identity in (\ref{sulia}) by restriction formula, we know a priori, under the assumption of Theorem \ref{third}, that the subset of $Y$ in which (\ref{sulia}) holds is open in the general topology (cf. Remark \ref{spod}).
\subsection{Applications: analyticity of the set of zeroes of relative Bergman kernels}
As an immediate application of Theorem \ref{mainresult}, we can first state the following corollary.
\begin{corollary}\label{kias}
  \label{cor1.6}Suppose $\varphi$ has analytic singularities on $\Delta^n_z
  {\times \Delta_w} $, then the complete pluripolar set
$$
    \{(z, w) \in \Delta^n_z {\times \Delta_w}  {; \log K_{c \varphi}}  (z,
    w) = - \infty\}
$$
  is also an analytic subset for any $c > 0$, where the relative Bergman
  kernel ${K_{c \varphi}}  (z, w)$ is defined as in {{(\ref{28})}}.
\end{corollary}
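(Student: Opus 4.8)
The plan is to connect the set of logarithmic poles of the relative Bergman kernel $K_{c\varphi}(z,w)$ with a sublevel set of fibrewise complex singularity exponents, and then invoke Theorem \ref{mainresult}. Recall that the relative Bergman kernel on the fibre $\{w=w_0\}$ is the Bergman kernel of the weighted space $A^2(\Delta^n_z, e^{-c\varphi_{w_0}})$ (holomorphic functions $L^2$ against $e^{-c\varphi_{w_0}}$), and that $\log K_{c\varphi}(z_0,w_0)=-\infty$ precisely when the evaluation functional $h\mapsto h(z_0)$ vanishes identically on that space, i.e. when every holomorphic $h$ with $\int_{\Delta^n}|h|^2 e^{-c\varphi_{w_0}}<\infty$ satisfies $h(z_0)=0$. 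The key step is therefore a pointwise characterization: for a plurisubharmonic weight $\varphi$ with analytic singularities, one has $\log K_{c\varphi}(z_0,w_0)=-\infty$ if and only if $c_{z_0}(\varphi_{w_0})\le c$, equivalently $c\cdot\varphi_{w_0}$ is not locally integrable near $z_0$, equivalently $1\notin\mathcal I(c\varphi_{w_0})_{z_0}$. The $(\Leftarrow)$ direction is immediate from the definition of the complex singularity exponent; the $(\Rightarrow)$ direction uses the Ohsawa–Takegoshi/H\"ormander–Bombieri–Skoda extension theorem to produce, whenever $1\in\mathcal I(c\varphi_{w_0})_{z_0}$, a holomorphic function $h$ on $\Delta^n$ with $h(z_0)\ne 0$ and $\int|h|^2 e^{-c\varphi_{w_0}}<\infty$, so the evaluation functional does not vanish and $K_{c\varphi}(z_0,w_0)>0$.

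Granting this characterization, I would argue as follows. Fix $c>0$. By the pointwise equivalence just described,
\[
\{(z,w)\in\Delta^n_z\times\Delta_w \; ;\; \log K_{c\varphi}(z,w)=-\infty\}
= \{(z,w)\; ;\; c_z\bigl((c\varphi)_w\bigr)\le 1\}
= Y_1(c\varphi).
\]
Now $c\varphi$ is again plurisubharmonic on $\Delta^n_z\times\Delta_w$ and still has analytic singularities (the singularity type is unchanged up to the positive scalar $c$; if $\varphi=\tfrac{a}{2}\log(|f_1|^2+\cdots+|f_k|^2)+v$ with $v$ bounded, then $c\varphi=\tfrac{ca}{2}\log(|f_1|^2+\cdots+|f_k|^2)+cv$), and $m=1$ here. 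Hence Theorem \ref{mainresult} applies directly to $c\varphi$ and shows that $Y_1(c\varphi)$ is an analytic subset of $\Delta^n_z\times\Delta_w$. This gives analyticity of the pole set. Its complete pluripolarity is then automatic: the function $(z,w)\mapsto\log K_{c\varphi}(z,w)$ is well known to be plurisubharmonic (indeed it is the variation of the relative Bergman kernel metric, plurisubharmonic by e.g. Berndtsson's theorem, or by the standard sup-of-psh-functions description $\log K_{c\varphi}=\sup\{|h(z)|^2 e^{-c\varphi(z,w)} \text{ over unit-norm } h\}$ after a routine regularization), and it is locally integrable because it is not identically $-\infty$ on any fibre where $c\varphi_w\not\equiv-\infty$; so its $-\infty$ locus is by definition a complete pluripolar set.

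The main obstacle is the pointwise characterization, specifically the effective use of extension to get the $(\Rightarrow)$ implication with the value at $z_0$ controlled: one needs that $1\in\mathcal I(c\varphi_{w_0})_{z_0}$ implies the existence of a genuinely nonvanishing section at $z_0$ in the weighted Bergman space on the \emph{whole} polydisc, not just on a small ball. This is handled by first extending on a small polydisc around $z_0$ using H\"ormander–Bombieri–Skoda, then multiplying by a holomorphic cutoff (a function vanishing to high order on the remaining part of the boundary region) or solving a $\bar\partial$-problem with a weight that is plurisubharmonic and only mildly singular, to globalize while keeping $L^2$ integrability against $e^{-c\varphi}$ and keeping the value at $z_0$ nonzero; the analytic-singularities hypothesis makes $e^{-c\varphi}$ locally comparable to $(\sum|f_i|^2)^{-ca}$ times a bounded factor, which keeps all these $L^2$ estimates under control. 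A secondary, more routine point is verifying plurisubharmonicity and local integrability of $\log K_{c\varphi}$ in the parameter $(z,w)$ jointly, which follows from the upper-level-set/semicontinuity properties already recorded in the paper together with the standard theory of Bergman kernels; I would cite this rather than reprove it.
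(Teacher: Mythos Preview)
Your proposal is correct and follows the paper's approach: the pole set of $\log K_{c\varphi}$ equals $Y_c(\varphi)$ (this is exactly the paper's identity (\ref{3.1}), derived just as you describe via H\"ormander--Bombieri--Skoda and the extremal description of the Bergman kernel), and then Theorem~\ref{mainresult} applies. Your ``main obstacle'' is not one: since $\Delta^n$ is pseudoconvex, H\"ormander--Bombieri--Skoda already produces a global $h\in\mathcal{H}_{\Delta^n}(c\varphi_{w_0})$ with $h(z_0)\neq 0$, so no cutoffs or auxiliary $\bar\partial$-problems are needed (this is Proposition~\ref{213}/equation~(\ref{2626}) in the paper).
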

It will be interesting to explore whether the psh function $\log K_{c\varphi}(z,w)$ has analytic singularities in Corollary \ref{kias}. Another interesting question will be to investigate the case in Corollary \ref{kias} when the total manifold is compactly fibered over the base manifold. Motivated by this, we can then state the second corollary as follows, which is obtained by Theorem \ref{third}.

\begin{corollary}\label{second}
Let $X$ be a projective manifold, $Y$ be a compact complex manifold and $f:X\rightarrow Y$ be a surjective holomorphic submersion of relative dimension $n$. Let $A\rightarrow X$ be a very ample line bundle and $\varphi$ be a quasi-psh function on $X$ with analytic singularities so that $2i\partial\overline{\partial}\varphi\geq -C \cdot i\Theta_{h_A}(A)$ for some smooth positive metric $h_A$ on $A$. Then there exists a positive number $N=N(C,n)=\lceil C\rceil +n+1$ (where $"\lceil\cdot\rceil"$ means round up), such that the complete pluripolar set $$S_{N,A}:=\{x\in X; \log K_{X/Y, NA, h_A^{N}e^{-2\varphi}}(x)=-\infty\}$$ is also an analytic subset of $X$, where $K_{X/Y, NA, h_A^{N}e^{-2\varphi}}$ denotes the relative Bergman kernel of $K_{X/Y}+NA$ with respect to the metric $h_A^{N} e^{-2\varphi}$ on $NA$ (see (\ref{chaoshi})).

\end{corollary}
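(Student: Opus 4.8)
The plan is to show that, fibre by fibre, the polar locus $S_{N,A}$ coincides with the co-support of the restricted multiplier ideal sheaf $\mathcal{I}(\varphi|_{X_y})$, and then to quote Theorem \ref{third}. Fix $y\in Y$ and restrict the data to the fibre $X_y$. Since $X$ is projective and $f$ is a submersion, $X_y$ is a projective manifold, $A|_{X_y}$ is very ample, $\varphi|_{X_y}$ is quasi-psh with analytic singularities, and $2i\partial\overline{\partial}(\varphi|_{X_y})\geq -C\,i\Theta_{h_A}(A)|_{X_y}$; moreover $K_{X/Y}|_{X_y}=K_{X_y}$ because the normal bundle $N_{X_y/X}\cong(f^{*}TY)|_{X_y}$ is trivial, so $(K_{X/Y}+NA)|_{X_y}=K_{X_y}+NA|_{X_y}$.

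Unwinding the definition (\ref{chaoshi}) of the relative Bergman kernel, which for $x\in X_y$ is computed using only the holomorphic sections of $K_{X_y}+NA|_{X_y}$ living on the single fibre $X_y$ and whose finiteness of $L^2$-norm is governed by $\mathcal{I}(\varphi|_{X_y})$ (the smooth factor $h_A^{N}$ playing no role), one sees that $\log K_{X/Y,NA,h_A^{N}e^{-2\varphi}}(x)=-\infty$ holds exactly when $x$ lies in the base locus of the linear system $\bigl|H^{0}\bigl(X_y,(K_{X_y}+NA|_{X_y})\otimes\mathcal{I}(\varphi|_{X_y})\bigr)\bigr|$. (A point of $X_y$ at which $\varphi=-\infty$ but $\mathcal{I}(\varphi|_{X_y})$ is still the full sheaf is a point where this kernel takes the value $+\infty$, not $-\infty$, and is therefore harmless for the set $S_{N,A}$.)

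Next I would check that this base locus is exactly $V(\mathcal{I}(\varphi|_{X_y})):=\{p\in X_y:\mathcal{I}(\varphi|_{X_y})_p\neq\mathcal{O}_{X_y,p}\}$. One inclusion is formal, since at a point $p$ with $\mathcal{I}(\varphi|_{X_y})_p\subsetneq\mathcal{O}_{X_y,p}$ every section of the twisted sheaf must vanish. For the reverse inclusion it suffices to prove that $(K_{X_y}+NA|_{X_y})\otimes\mathcal{I}(\varphi|_{X_y})$ is globally generated. For $1\leq i\leq n$ I would equip $(N-i)A$ with the singular metric $h_A^{N-i}e^{-2\varphi}$, whose curvature current dominates $(N-i-C)\,i\Theta_{h_A}(A)$, a form which restricts to a Kähler form on $X_y$ as soon as $N-i-C>0$. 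Hence, taking $N=\lceil C\rceil+n+1>C+n$, the Nadel vanishing theorem gives $H^{q}\bigl(X_y,(K_{X_y}+(N-i)A|_{X_y})\otimes\mathcal{I}(\varphi|_{X_y})\bigr)=0$ for all $q\geq 1$ and all $1\leq i\leq n$; equivalently $(K_{X_y}+NA|_{X_y})\otimes\mathcal{I}(\varphi|_{X_y})$ is $0$-regular with respect to the very ample bundle $A|_{X_y}$, and Castelnuovo--Mumford regularity yields global generation. Thus $S_{N,A}\cap X_y=V(\mathcal{I}(\varphi|_{X_y}))$, and since $\varphi|_{X_y}$ has analytic singularities this set equals $\{p\in X_y:c_{p}(\varphi|_{X_y})\leq 1\}$.

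Taking the union over $y\in Y$ we obtain $S_{N,A}=Z_{1}(\varphi)$ in the notation of Theorem \ref{third}, which is therefore an analytic subset of $X$; and an analytic subset is automatically complete pluripolar, being locally the $-\infty$-set of a plurisubharmonic function of the shape $\log\sum_j|g_j|^{2}$ (this is also consistent with $\log K_{X/Y,NA,h_A^{N}e^{-2\varphi}}$ being plurisubharmonic on $X$ by Berndtsson's theorem on positivity of direct images, as the metric $h_A^{N}e^{-2\varphi}$ on $NA$ has curvature $\geq(N-C)\,i\Theta_{h_A}(A)>0$). I expect the part that really requires care to be the \emph{effective} statement: fixing $N=\lceil C\rceil+n+1$ forces one to run the Nadel/Castelnuovo--Mumford bookkeeping with the precise constant $C$ (the binding index being $i=n$), while one must also keep the relative Bergman kernel genuinely fibrewise and handle the locus $\{\varphi=-\infty\}$ correctly so as to separate the $-\infty$-set of the kernel from its $+\infty$-set.
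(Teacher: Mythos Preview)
Your proof is correct and follows the same strategy as the paper: establish $S_{N,A}=Z_{1}(\varphi)$ fibrewise and then invoke Theorem~\ref{third}. The only cosmetic difference is that the paper obtains global generation of $(K_{X_y}+NA)\otimes\mathcal{I}(\varphi|_{X_y})$ by quoting Siu's uniform global generation formula directly, whereas you unwind that result into its underlying Nadel vanishing plus Castelnuovo--Mumford regularity argument.
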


\subsection{Examples and organizations}

In the end, we will recall C. Li's counterexample ({\cite{lichi}}) to show
that $Y_c (\varphi)$ is in general not an analytic subset even if we assume
$\varphi$ to be{{ continuous}} (which precisely means $e^{\varphi}$ is
continuous). Note the plurisubharmonic function appeared as an counterexample in
{\cite{wangxiaoqin}} (cf. Example \ref{buyong} below) is discontinuous.

The rest of the paper is organized as follows.
In section \ref{sec2}, we list some basic lemmas and propositions for the
proof of our main results.
In section \ref{sec3}, we show proofs of our main results and applications.
In section \ref{sec6}, we give the explicit construction of the aforementioned
counterexamples.

\section{Preparations}\label{sec2}

\subsection{Singularities of plurisubharmonic functions}
Let us first recall some basic notions to measure the singularities of plurisubharmonic functions.

\begin{definition}
  \label{def2.1}Let $\varphi$ be a plurisubharmonic function defined on a
  domain $\Omega \subset \mathbb{C}^n$.
  \begin{enumerate}
    \item A subset $A \subset \Omega$ is called a \textbf{pluripolar
    subset}, if for any $x \in \Omega$, there exists a neighborhood $U$ of
    $x$ and $\psi \in {PSH} (U)$ and $\psi\not\equiv -\infty$, such that
    $A \bigcap U \subset \{\psi = - \infty\}$;
    
    \item A subset $A \subset \Omega$ is called a \textbf{complete
    pluripolar subset}, if for any $x \in \Omega$, there
    exists a neighborhood $U$ of $x$ and $\psi \in {PSH} (U)$ and $\psi\not\equiv -\infty$, such that $A \bigcap U = \{\psi = - \infty\}$;
    
    \item A subset $A \subset \Omega$ is called an \textbf{analytic
    subset}, if $A$ is closed and for any $x \in \Omega$, there exists a
    neighborhood $U$ of $x$ and $F_j \in \mathcal{O} (U)$, $j = 1, \ldots .,
    M$, such that $A \bigcap U = \bigcap_{j = 1}^M F_j^{-1}(0)$.
  \end{enumerate}
\end{definition}

It is easy to see that $A$ is an analytic subset $\Rightarrow$ $A$ is a complete
pluripolar subset ${\Rightarrow}$ $A$ is a pluripolar subset. An equivalent definition of an
analytic subset $A$ is that for any $x \in \Omega$,
$(A, x)$, which means the germ of $A$ at $x$, is analytic (i.e. the germs of zeroes of a family of holomorphic
functions).

\begin{definition}
  \label{def21}Let $\varphi$ be a (quasi-)plurisubharmonic function defined on a
  domain $\Omega \subset \mathbb{C}^n$. For any $x \in \Omega$, the stalk of
  the (analytic) multiplier ideal sheaf $\mathcal{I}(\varphi)$ associated to $\varphi$ at $x$ is
  defined as
  \begin{equation}\label{wocaof}
    \mathcal{I} (\varphi)_x := \{f \in \mathcal{O}_{\Omega, x} ; |f|^2 e^{- 2
    \varphi} \in L^1_{\text{loc}} \text{ near } x\} .
  \end{equation}
  The complex singularity exponent of $\varphi$ at $x$ is defined as
$$
    c_x (\varphi) := \sup \{ c \geq 0 ; e^{- 2 c \varphi} \in
    L^1_{\text{loc}} \text{ near } x \}=\sup \{ c \geq 0 ;\mathcal{I}(c\varphi)_x=\mathcal{O}_{\Omega,x} \}.
$$
  The Lelong number of $\varphi$ at $x$ is defined as
  \begin{equation}
    \nu_x (\varphi) := \liminf_{z \rightarrow x} \frac{\varphi (z)}{\log
    |z - x|} .
  \end{equation}
\end{definition}

We say that a psh or a quasi-psh function $\varphi$ has{\textbf{ analytic singularities}}, if near any $x \in
\Omega$,
\begin{equation}\label{analyticsing}
  \varphi = \frac{\alpha}{2} \log ( \sum_{i = 1}^N |f_i |^2 ) + \mathcal{C}^\infty,
\end{equation}
where $\alpha \in \mathbb{R}_+$ and each $f_i$ is a holomorphic function near $x$. 
It it clear that all plurisubharmonic functions
with analytic singularities are locally H{\"o}lder continuous. When $\varphi$ has analytic singularities, the (analytic) multiplier ideal at $x$ coincides with the multiplier ideal associated to the ideal generated by $\{f_{i,x}\}_{i=1}^N$.

Let $\mathcal{H}_{\Omega}(c\varphi)$ be the Hilbert space containing  all $L^2$ holomorphic functions on $\Omega$ with respect to the measure $e^{-2c\varphi}d\lambda$, where $d\lambda$ is the Lebesgue measure.

\begin{proposition}
  \label{213}Let $\Omega \subset \subset \mathbb{C}^n$ be a pseudoconvex
  domain and $\varphi$ a plurisubharmonic function on $\Omega$. Then for any $c>0$,
$$
    \{ z \in \Omega ; c_z (\varphi) \leq c \} = \{ z \in \Omega ; e^{- 2 c
    \varphi} \notin L^1_{\emph{loc}, z} \} = \bigcap_{f \in \bigcup_{c' > c}
    \mathcal{H}_{\Omega} (c' \varphi)} f^{- 1} (0) .
$$
\end{proposition}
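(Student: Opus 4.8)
The plan is to establish the two equalities $\{c_z(\varphi)\le c\}=\{e^{-2c\varphi}\notin L^1_{\mathrm{loc},z}\}$ and $\{c_z(\varphi)\le c\}=\bigcap_f f^{-1}(0)$ separately, writing $A$, $B$, $C$ for these three sets in this order. For the first equality, the inclusion $B\subseteq A$ is elementary: since a plurisubharmonic function is locally bounded above, near any $z$ one has $e^{-2c'\varphi}\ge(\mathrm{const})\,e^{-2c\varphi}$ for every $c'\ge c$, so non-integrability of $e^{-2c\varphi}$ at $z$ propagates to every $c'\ge c$ and forces $c_z(\varphi)\le c$. The reverse inclusion $A\subseteq B$ is precisely where the (strong) openness property of multiplier ideal sheaves (cf. \cite{guan-zhou1}) is needed: arguing by contraposition, if $e^{-2c\varphi}\in L^1_{\mathrm{loc},z}$, then by openness $e^{-2c'\varphi}\in L^1_{\mathrm{loc},z}$ for some $c'>c$, hence $c_z(\varphi)\ge c'>c$, i.e. $z\notin A$.

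For the second equality, $A\subseteq C$ is again immediate by contraposition: if some $f\in\mathcal{H}_\Omega(c'\varphi)$ with $c'>c$ satisfies $f(z)\neq 0$, then $|f|^2\ge\varepsilon>0$ on a small ball $B(z,r)\subset\subset\Omega$, so $\int_{B(z,r)}e^{-2c'\varphi}\,d\lambda\le\varepsilon^{-1}\|f\|^2_{\mathcal{H}_\Omega(c'\varphi)}<\infty$ and therefore $c_z(\varphi)\ge c'>c$. The content lies in $C\subseteq A$. Here I would take $z\notin A$, i.e. $c_z(\varphi)>c$, choose $c'$ with $c<c'<c_z(\varphi)$ so that $e^{-2c'\varphi}\in L^1_{\mathrm{loc},z}$, and invoke the H\"ormander--Bombieri--Skoda $L^2$ existence theorem (Theorem 4.4.4 in \cite{hormanderbook}, or p.\,384 in \cite{demailly-book}) on the pseudoconvex domain $\Omega$ with weight $2c'\varphi$: it produces $f\in\mathcal{O}(\Omega)$ with $f(z)=1$ and $\int_\Omega|f|^2e^{-2c'\varphi}(1+|w|^2)^{-N}\,d\lambda<\infty$ for some $N$ depending only on $n$. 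Since $\Omega$ is \emph{bounded}, the factor $(1+|w|^2)^{-N}$ is bounded below by a positive constant on $\Omega$, so $f\in\mathcal{H}_\Omega(c'\varphi)$; as $c'>c$ and $f(z)\neq 0$, this shows $z\notin C$.

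The only genuinely non-formal inputs are thus the openness theorem --- which is, in effect, just a reformulation of the identity $A=B$, being exactly what matches the description ``$c_z(\varphi)\le c$'' with ``$e^{-2c\varphi}$ not locally integrable at $z$'' --- and the H\"ormander--Bombieri--Skoda theorem, used to turn local integrability into a global $L^2$ holomorphic function not vanishing at the prescribed point. I expect no serious obstacle beyond these; the one point to watch is the polynomial factor $(1+|w|^2)^{-N}$ in the usual statement of H\"ormander--Bombieri--Skoda, which is harmless here exactly because the hypothesis is $\Omega\subset\subset\mathbb{C}^n$ rather than merely pseudoconvex, so handling it is pure bookkeeping.
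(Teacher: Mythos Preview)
Your proposal is correct and follows essentially the same approach as the paper: the two nontrivial inputs are the H\"ormander--Bombieri--Skoda theorem (to produce a global $L^2$ holomorphic function nonvanishing at a point where $e^{-2c'\varphi}$ is locally integrable) and the openness property of multiplier ideals (to pass from $c_z(\varphi)\le c$ to non-integrability of $e^{-2c\varphi}$). Your organization is in fact slightly cleaner than the paper's---in particular your argument for $C\subseteq A$ avoids openness by choosing $c'$ directly from the definition of $c_z(\varphi)$, and you correctly flag that boundedness of $\Omega$ is what absorbs the polynomial factor in H\"ormander--Bombieri--Skoda.
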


\begin{proof}
  Let us first show the second identity. For the ``$\subset$'' direction, we
  assume that there exists $c' > c,$ $f \in \mathcal{H}_{\Omega} (c' \varphi)$
  such that $f (z) \neq 0$ at $z \in \Omega$. Then it is clear that $e^{- 2 c'
  \varphi}$ is $L_{{loc}}^1$ near $z$ and $c < c' \leq c_z (\varphi)$.
  For the other "${\supset}$" direction, using H{\"o}rmander-Bombieri-Skoda's
  extension theorem (e.g. Theorem 4.4.4 in {\cite{hormanderbook}} or Theorem
  7.6 in {\cite{demailly-book}}) we know that if $e^{- 2 c \varphi} $ is
  $L^1_{{loc}}$ near $z \in \Omega$, then there exists $f \in
  \mathcal{H}_{\Omega} (c \varphi)$ and $f (z) = 1$, which confirms our claim.
  The first identity is nothing but a direct consequence of the openness
  property of multiplier ideal sheaves ({\cite{bern13}}).
\end{proof}

\begin{remark}
  By the same argument of the proof, we can also show that
  \begin{equation}
    \{ z \in \Omega ; c_z (\varphi) \leq c \} = \bigcap_{f \in
    \mathcal{H}_{\Omega} (c \varphi)} f^{- 1} (0). \label{2626}
  \end{equation}
  Hence the openness property of multiplier ideal sheaves (cf. {\cite{bern13}} or
  {\cite{guan-zhou15soc}}) can be equivalent to saying that two analytic subsets $$\bigcap_{f \in \mathcal{H}_{\Omega} (c \varphi)} f^{- 1} (0)$$ and
  $$\bigcap_{f \in \bigcup_{c' > c} \mathcal{H}_{\Omega} (c' \varphi)} f^{- 1}
  (0)$$ coincide. The openness property also yields that
$$
    \{ z \in \Omega ; c_z (\varphi) \leq c \} = {Supp}
    (\mathcal{O}_{\Omega} / \mathcal{I} (c \varphi)) .
$$
\end{remark}

\subsection{Ohsawa-Takegoshi $L^2$ Extension theorem and Restriction formula}

In order to prove our main results, we will adopt the following version of
Ohsawa-Takegoshi $L^2$ extension theorem:

\begin{lemma}
 [cf. {{{\cite{demailly-book-hep}}\label{ote}}}]Let $\varphi (z, w)$ be a
  plurisubharmonic function on $\Delta^n_z \times \Delta^m_w$, then for any $f
  \in \mathcal{O} (\Delta^n_z)$ satisfying that
$$
    \int_{\Delta^n_z} | f (z) |^2 e^{- \varphi (z, 0)} d \lambda_n < \infty,
$$
  there exists an $F \in \mathcal{O} (\Delta^n_z \times \Delta^m_w)$ such that
  $F (z, 0) = f (z)$ and
$$
    \int_{\Delta^n_z \times \Delta^m_w} \frac{| F |^2 e^{- \varphi} 
    }{| w |^{2 m} \log^2 | w |   
    } d \lambda_{m + n} \leq C \int_{\Delta^n_z} |f (z) |^2 e^{-
    \varphi (z, 0)} d \lambda_n ,
$$
  where $C = C_m$ is a numerical constant which only depends on $m$.
\end{lemma}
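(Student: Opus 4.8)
This is the Ohsawa--Takegoshi $L^2$ extension theorem for extension from the codimension-$m$ coordinate subspace $\{w=0\}$ of $\Omega:=\Delta^n_z\times\Delta^m_w$, equipped with the sharp singular weight; it is proved in \cite{demailly-book-hep} by the twisted Bochner--Kodaira--Nakano inequality together with H{\"o}rmander's $\bar\partial$-method, and I would only recall the structure of that argument. A preliminary regularization reduces to the case in which $\varphi$ is smooth and strictly plurisubharmonic on a slightly smaller polydisc: replace $\varphi$ by a decreasing sequence $\varphi_\nu$ of such functions with $\varphi_\nu\downarrow\varphi$, solve the extension problem for each $\varphi_\nu$ (note $e^{-\varphi_\nu}\le e^{-\varphi}$, so the integrability hypothesis on $f$ persists) with a bound uniform in $\nu$, and pass to a weak limit using that bound; the limit is holomorphic and still restricts to $f$ on $\{w=0\}$ because each approximant does, and a Fatou-type argument recovers the estimate for $\varphi$ itself.

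For the analytic core, let $\tilde f(z,w):=f(z)$ be the trivial extension and let $\chi=\chi(|w|^2)$ be a smooth cutoff with $\chi\equiv1$ near $\{w=0\}$ and support contained in $\{|w|<1/2\}$, so that $\tilde g:=\bar\partial(\chi\tilde f)=(\bar\partial\chi)\tilde f$ is a smooth $(0,1)$-form supported where $\bar\partial\chi\ne0$, in particular bounded away from $\{w=0\}$. One solves $\bar\partial u=\tilde g$ on $\Omega$ with respect to a weight of the shape
\[
  e^{-\varphi}\,(|w|^{2}+\varepsilon^{2})^{-m}\,e^{-\theta(-\log(|w|^{2}+\varepsilon^{2}))},
\]
where $\varepsilon>0$ regularizes the pole along $\{w=0\}$ and $\theta$ is a convex function furnishing the Ohsawa-type twist. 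In the limit $\varepsilon\to0$ the factor $(|w|^{2})^{-m}$ fails to be locally integrable across $\{w=0\}$, which is exactly what forces any $L^2$ solution of finite weighted norm to have vanishing trace on $\{w=0\}$; hence, setting $F:=\chi\tilde f-u$, we get $F\in\mathcal{O}(\Omega)$ with $F(z,0)=f(z)$ (using $\chi\equiv1$ near $\{w=0\}$). The twisted Bochner--Kodaira identity bounds $\int_\Omega|u|^2(\cdots)$ by an integral of the form $\int_\Omega\langle B^{-1}\tilde g,\tilde g\rangle(\cdots)$, with $B$ the positive curvature term generated by the twist; since $\tilde g$ lives where $\bar\partial\chi\ne0$, this integral is, after the integration by parts that trades the non-integrable $(|w|^2+\varepsilon^2)^{-m}$ for a logarithmic factor, dominated by $C_m\int_{\Delta^n_z}|f(z)|^2e^{-\varphi(z,0)}\,d\lambda_n$ with $C_m$ numerical and depending only on $m$. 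Letting $\varepsilon\to0$ and extracting a weak limit delivers the estimate with denominator $|w|^{2m}\log^2|w|$ --- the exponent $2m$ together with the single power $\log^2$ being precisely the borderline case of $L^1$-integrability of $|w|^{-2m}(\log|w|)^{-2}$ over $\Delta^m_w$ near the origin.

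The only genuinely delicate point is the calibration of the twist $\theta$ and of the $\varepsilon$-regularized weight so that simultaneously (i) the error produced by $\bar\partial\chi$ is absorbed into the positivity created by the twist, (ii) the final constant is numerical and depends on $m$ alone, and (iii) the output weight is exactly $|w|^{2m}\log^2|w|$ and not a more (or less) singular variant; I note in passing that a naive induction on $m$ by successive one-codimensional extensions does \emph{not} suffice, since it produces the product weight $\prod_j|w_j|^2\log^2|w_j|$, which is genuinely larger than a constant multiple of $|w|^{2m}\log^2|w|$ near the small diagonal. This balancing is precisely the computation carried out in \cite{demailly-book-hep}; the regularization of $\varphi$ and the weak-limit passages, by contrast, are entirely routine.
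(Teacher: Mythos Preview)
The paper does not supply a proof of this lemma at all: it is stated with the attribution ``cf.\ \cite{demailly-book-hep}'' and used as a black box. Your proposal is a faithful outline of the standard twisted Bochner--Kodaira / H{\"o}rmander argument that appears in that reference, so in substance you are doing exactly what the paper does---deferring to Demailly---only with more detail.
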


As a very useful application of $L^2$ extension theorem, we recall the following
restriction formula of multiplier ideal sheaves.

\begin{theorem}
  [{{\emph{Theorem 14.1 in {\cite{demailly-book-hep}}}}}] \label{resc}
  Let $\varphi$ be a
  plurisubharmonic function on a complex manifold $X$, and let $Y \subset X$
  be a submanifold. Then
$$
    \mathcal{I} (\varphi |_Y) \subset \mathcal{I} (\varphi) |_Y .
$$
\end{theorem}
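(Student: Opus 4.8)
The plan is to reduce the statement to a local extension problem and then feed it directly into the Ohsawa--Takegoshi Lemma \ref{ote}. Since the assertion is local on $Y$, I would fix a point $x_0 \in Y$ and choose holomorphic coordinates $(z,w) \in \Delta^n_z \times \Delta^m_w$ centered at $x_0$ in which $X$ is identified with $\Delta^n_z \times \Delta^m_w$ and $Y$ with the slice $\{w=0\} \cong \Delta^n_z$. Spelling out that $\mathcal{I}(\varphi)|_Y$ denotes the analytic inverse image sheaf $\mathcal{O}_Y \otimes_{\mathcal{O}_X} \mathcal{I}(\varphi)$, the inclusion $\mathcal{I}(\varphi|_Y) \subset \mathcal{I}(\varphi)|_Y$ amounts to the following: every germ $f \in \mathcal{I}(\varphi|_Y)_{x_0}$ is the restriction to $Y$ of some germ $F \in \mathcal{I}(\varphi)_{x_0}$. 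If $\varphi|_{\{w=0\}} \equiv -\infty$ near $x_0$, then $\mathcal{I}(\varphi|_Y)_{x_0} = 0$ and there is nothing to prove, so I may assume that $\varphi(z,0)$ is a genuine plurisubharmonic function of $z$.

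Next I would take $f \in \mathcal{I}(\varphi|_Y)_{x_0}$. By Definition \ref{def21}, after shrinking $\Delta^n_z$ we may represent $f$ by a holomorphic function on $\Delta^n_z$ with $\int_{\Delta^n_z} |f(z)|^2 e^{-2\varphi(z,0)}\,d\lambda_n < \infty$. Applying Lemma \ref{ote} with $\varphi$ replaced by $2\varphi$ yields $F \in \mathcal{O}(\Delta^n_z \times \Delta^m_w)$ with $F(z,0) = f(z)$ and
$$
\int_{\Delta^n_z \times \Delta^m_w} \frac{|F|^2 e^{-2\varphi}}{|w|^{2m}\log^2|w|}\, d\lambda_{m+n} \;\leq\; C_m \int_{\Delta^n_z} |f(z)|^2 e^{-2\varphi(z,0)}\, d\lambda_n \;<\; \infty .
$$
Finally, since $|w|^{2m}\log^2|w| \to 0$ as $w \to 0$, there is $\delta \in (0,1)$ with $|w|^{2m}\log^2|w| \leq 1$ for $0 < |w| < \delta$; dropping the weight on the polydisc $\Delta^n_z \times \{|w|<\delta\}$ only decreases the integral, so $\int |F|^2 e^{-2\varphi}\,d\lambda_{m+n} < \infty$ near $x_0$, i.e. $F \in \mathcal{I}(\varphi)_{x_0}$, while $F|_Y = f$ by construction. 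As $x_0$ and $f$ were arbitrary, this gives the claimed inclusion.

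The proof has essentially no obstacle beyond invoking Lemma \ref{ote}: all the analytic difficulty is packaged inside the $L^2$ extension theorem. The only points requiring (minor) care are bookkeeping ones --- correctly interpreting the restriction $\mathcal{I}(\varphi)|_Y$ as the sheaf-theoretic inverse image, keeping track of the factor $2$ between the $e^{-\varphi}$ normalization of Lemma \ref{ote} and the $e^{-2\varphi}$ convention for multiplier ideals in Definition \ref{def21}, and observing that the Ohsawa--Takegoshi weight $(|w|^{2m}\log^2|w|)^{-1}$ blows up along $Y$, which is precisely why the extension lands in $\mathcal{I}(\varphi)$ rather than in some larger space. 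I would also note that the higher-codimension case $m>1$ is handled in one stroke by the stated form of Lemma \ref{ote}, though one could alternatively iterate the codimension-one case.
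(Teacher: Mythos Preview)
Your proof is correct and follows exactly the approach the paper indicates: the paper does not spell out a proof of Theorem~\ref{resc} but cites Demailly and introduces it as ``a very useful application of [the] $L^2$ extension theorem,'' i.e.\ Lemma~\ref{ote}, which is precisely the tool you invoke. Your bookkeeping (the local chart, the $2\varphi$ normalization, the observation that $|w|^{2m}\log^2|w|\to 0$ so the Ohsawa--Takegoshi weight dominates $1$ near $Y$) is all in order.
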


\subsection{Log plurisubharmonicity of fibrewise Bergman kernels}

Let $\Delta^n\times \Delta^m$ be the product of unit polydiscs and $\varphi$ be a plurisubharmonic function defined on $\Delta^n\times\Delta^m$. Let $\mathcal{H}_{\Delta^n} (c \varphi_w)$ denote the Hilbert space which
consists of all $L^2$ holomorphic functions on $\Delta^n$ with respect to the
measure $e^{- 2 c \varphi_w} d \lambda_n$. For any $c > 0$, let $K_{c \varphi}
(z, w)$ denote the fibrewise Bergman kernel with respect to the weight $e^{-
2 c \varphi_w}$. By definition, it can be expressed as an extremal function
\begin{equation}
  K_{c \varphi} (z, w) = \sup \{ |f_w (z) |^2 ; f_w \in
  \mathcal{H}_{\Delta^n} (c \varphi_w),|| f_w ||\leq 1 \} , \label{28}
\end{equation}
where $||f_w||^2=\int_{\Delta^n}|f_w(z)|^2 e^{-2c\varphi(z,w)}d\lambda_n (z)$. Berndtsson has proved the following well-known result.

\begin{theorem}
  [cf. {{{\cite{berndtsson2}}}}]\label{2.4}$\log K_{c \varphi} (z, w)$ is plurisubharmonic with respect to both $z$ and $w$.
\end{theorem}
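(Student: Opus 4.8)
The statement to prove is Berndtsson's theorem that the fibrewise Bergman kernel $\log K_{c\varphi}(z,w)$, defined via the extremal characterization \eqref{28}, is plurisubharmonic jointly in $(z,w)$ (equivalently, as stated, psh separately in $z$ and in $w$ — though the real content is joint psh-ness). The plan is to reduce everything to two ingredients: (i) subharmonicity of $\log$ of a modulus-squared of a holomorphic function in the $z$-variable, handled by the extremal formula; and (ii) Berndtsson's positivity theorem for direct images of the trivially twisted bundle $K_{\Delta^n\times\Delta^m/\Delta^m}$, which is where the curvature computation and the psh-ness of $\varphi$ enter.

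First I would fix the $w$-dependence and recall that for each fixed $w$ the function $z\mapsto \log K_{c\varphi}(z,w)$ is psh: indeed $K_{c\varphi}(\cdot,w)=\sup\{|f_w|^2\}$ over the unit ball of the Hilbert space $\mathcal{H}_{\Delta^n}(c\varphi_w)$, and a locally uniform supremum (after taking upper semicontinuous regularization, which changes nothing here since the Bergman kernel is already continuous where finite) of the psh functions $\log|f_w|^2$ is psh; one must only check the sup is locally bounded above, which follows from the sub-mean-value inequality $|f_w(z)|^2\le C_{K}\|f_w\|^2$ on compact subsets (using that $e^{-2c\varphi_w}$ is locally bounded below by a positive constant away from the $-\infty$ set, and a Cauchy-type estimate). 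Next, for the joint statement I would invoke the key tool: consider the (trivial) family $p:\Delta^n_z\times\Delta^m_w\to\Delta^m_w$ and the Hermitian metric $e^{-2c\varphi}$ on the trivial line bundle. Since $2c\varphi$ is psh, $i\partial\bar\partial(2c\varphi)\ge 0$, so by Berndtsson's curvature theorem for direct images the metric on the (infinite-rank) bundle $E$ with fibres $\mathcal{H}_{\Delta^n}(c\varphi_w)$ is Nakano (in particular Griffiths) semipositive over $\Delta^m_w$. The Bergman kernel $K_{c\varphi}(z,w)$ at a fixed $z$ is, up to the evaluation functional, the norm-squared of the coherent state / reproducing kernel element, and Griffiths semipositivity of $E$ translates — via the standard duality between $\log$ of the dual norm of an evaluation functional and the metric on $E$ — into psh-ness of $w\mapsto \log K_{c\varphi}(z_0,w)$ for each fixed $z_0$.

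To get the \emph{joint} psh-ness rather than separate psh-ness in each variable, I would use the slicing trick: it suffices to show that for every complex line $\ell\hookrightarrow\Delta^n\times\Delta^m$, $t\mapsto\log K_{c\varphi}(\ell(t))$ is subharmonic. Parametrize the disc in $(z,w)$-space and pull back: the restriction of $K_{c\varphi}$ along $\ell$ can again be written as a fibrewise extremal function, now for a new plurisubharmonic weight obtained by restricting $\varphi$, so the one-variable case combined with Berndtsson's theorem closes the argument. Concretely, given $\ell(t)=(z_0+ta, w_0+tb)$, the value $K_{c\varphi}(\ell(t))$ dominates $|f_{w_0+tb}(z_0+ta)|^2$ for admissible $f$, and one checks a local upper bound and applies the sub-mean-value property along $t$; the opposite inequality for the mean value follows from the direct-image positivity applied to the one-dimensional base $\{t\}$.

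The main obstacle is step (ii): making rigorous the passage from ``$i\partial\bar\partial\varphi\ge 0$'' to ``the $L^2$ metric on the fibres is Griffiths-positive'' and then to ``$\log$ of the Bergman kernel is psh.'' This is precisely the heart of Berndtsson's work and requires either (a) citing his theorem on positivity of direct images (as the excerpt does elsewhere, e.g. Theorem 1.2 of \cite{ber}) as a black box and only verifying the dictionary between Bergman kernels and dual metrics, or (b) reproving it via a Hörmander-type $\bar\partial$-estimate with the twisted Bochner-Kodaira inequality. Since the excerpt permits citing earlier-stated results, the cleanest route is to quote Berndtsson's direct-image positivity and spend the effort only on: the local-boundedness estimates needed for the $\sup$ in \eqref{28} to behave well, the identification $\log K_{c\varphi}(z_0,\cdot)=\sup$ over the dual unit ball of $|\langle \mathrm{ev}_{z_0}, f\rangle|^2$, and the restriction-to-a-line bookkeeping that upgrades separate to joint plurisubharmonicity.
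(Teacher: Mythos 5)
The paper offers no proof of Theorem \ref{2.4}: it is quoted directly from Berndtsson \cite{berndtsson2} as a known result, so there is no internal argument to measure your plan against, and since the irreducible core of your own outline is likewise to cite Berndtsson's positivity theorem, the two coincide in substance. Two cautions if you were to write it out in full. First, the step you call ``restriction-to-a-line bookkeeping'' hides the real difficulty: along $\ell(t)=(z_0+ta,w_0+tb)$ the evaluation point $z_0+ta$ moves with the fibre, and separate plurisubharmonicity in $z$ and in $w$ does not control this diagonal behaviour; one must first apply the shear $(\zeta,t)\mapsto(\zeta+ta,\,w_0+tb)$ to freeze the evaluation point, which converts the product family into a family with varying (translated) fibres, so the base-direction positivity is needed for general pseudoconvex total spaces and not only for products. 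Second, the direct-image theorem you invoke from \cite{ber} concerns proper (compact-fibre) fibrations, whereas here the fibres are polydiscs; the relevant statement is the infinite-rank/domain version in \cite{berndtsson2} itself, whose main theorem in fact already asserts the joint plurisubharmonicity of $\log K_{c\varphi}$ on a pseudoconvex domain with psh weight --- so the cleanest correct route is to cite that statement directly for $\Delta^n_z\times\Delta^m_w$ and the weight $2c\varphi$, exactly as the paper does.
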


Now we turn to the case when the fiber is compact. Let $X$ be a weakly pseudoconvex K\"{a}hler manifold and $Y$ be a connected complex manifold. Let $f:X\rightarrow Y$ be a proper surjective holomorphic submersion, $L$ be a holomorphic line bundle over $X$ equipped with a singular psh metric $h_L$ so that

(i) the curvature current $i\Theta_{h_L}(L)\geq 0$;

(ii) $H^0(X_{y_{0}}, \mathcal{O}(K_{X_{y_0}}+L)\otimes \mathcal{I}(h_L |_{X_{y_{0}}}))\neq 0$ for some $y_0 \in Y$, where $X_{y_0}=f^{-1}(y_0)$ and $\mathcal{O}(K_{X_y})$ stands for the canonical bundle over $X_y$.

Let us recall the relative (fibrewise) Bergman kernel $K_{X_y,L,h_L}(x)$ of $K_{X_y}+L|_{X_y}$ at each point $x$ belonging to each compact manifold $X_y$ defined by
\begin{equation}\label{chaoshi} 
\sup\{u(x)\otimes\overline{u}(x);u\in H^0(X_{y}, \mathcal{O}(K_{X_y}+L)\otimes \mathcal{I}(h_L |_{X_y})), \int_{X_y}|u|^2_{ h_L}\leq 1\}.
\end{equation}
Equivalently, the relative (fibrewise) Bergman kernel can be also expressed by
$$
\sum u_j \otimes \overline{u_j},
$$
where $\{u_j\}$ forms orthonormal basis of sections of $\mathcal{O}(K_{X_y}+L)\otimes \mathcal{I}(h_L |_{X_y})$ (see p.346 in \cite{bp08}). Note the Bergman kernel $K_{X_y,L,h_L}$ is not a function, but $\text{log}K_{X_y,L,h_L}$ induces a singular psh metric on $K_{X_y}+L|_{X_y}$ with semi-positive curvature current. By setting $\mathcal{O}(K_{X/Y})=\mathcal{O}(K_X-f^{\ast}K_Y)$, we can identify $\mathcal{O}(K_{X_y}+L|_{X_y})$ with $\mathcal{O}(K_{X/Y}+L)|_{X_y}$. Therefore, when $y$ varies in $Y$, The logarithm of the fibrewise Bergman kernel $\text{log}K_{X_y,L,h_L}$ induces a metric on  the line bundle  $\mathcal{O}(K_{X/Y}+L)$. 

The following well-known result states the plurisubharmonic variation of fibrewise Bergman kernels along the base directions, or equivalently, the plurisubharmonicity of the singular metric induced by fibrewise Bergman kernels.
\begin{theorem} [Theorem 1.3 in \cite{zhouzhujdg}, see also \cite{bp08} for the projective case]\label{domymath}
The logarithm of the fibrewise Bergman kernel (\ref{chaoshi}) of the line bundle $\mathcal{O}(K_{X/Y}+L)$ is plurisubharmonic on $X$.
\end{theorem}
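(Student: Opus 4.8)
The plan is to follow the by-now standard route through positivity of direct image sheaves, as in \cite{bp08} for the projective case and \cite{zhouzhujdg} in general: isolate the analytic content in a single positivity statement for the $L^{2}$-metric on the relevant direct image, and then deduce the assertion about fibrewise Bergman kernels by an evaluation-section argument.

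\emph{Reductions.} Plurisubharmonicity of the metric on $\mathcal{O}(K_{X/Y}+L)$ induced by $\log K_{X_y,L,h_L}$ is local on $X$, so I may shrink and assume $Y$ is a coordinate ball, trivialize $K_{X/Y}$ by $f^{*}(dw)$ together with a nowhere-vanishing relative $n$-form, and choose a local frame of $L$ in which $h_L=e^{-\psi}$ with $i\partial\bar\partial\psi\ge 0$. Restricted to a single fibre, $\log K_{X_y,L,h_L}$ is then, up to a fixed smooth term, the logarithm of $\sum_j|u^y_j|^2e^{-\psi}$ for an orthonormal basis $\{u^y_j\}$, hence psh in the fibre directions by the classical fact that $\log\sum_j|g_j|^{2}$ is psh for holomorphic $g_j$ (cf. \cite{berndtsson2}); the real difficulty is therefore the base and mixed directions. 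I would also record upper semicontinuity of $(y,x)\mapsto\log K_{X_y,L,h_L}(x)$: a section realizing the supremum in (\ref{chaoshi}) within $\varepsilon$ has $L^{2}$-norm at most $1$, so by fibrewise mean-value/elliptic estimates and compactness of the fibres such families are precompact, and a limiting argument yields usc.

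\emph{The analytic core.} Consider the coherent sheaf $E:=f_{*}\big(\mathcal{O}(K_{X/Y}+L)\otimes\mathcal{I}(h_L)\big)$ on $Y$; it is nonzero by hypothesis (ii), and after shrinking $Y$ it is locally free off a nowhere-dense analytic subset $Z\subset Y$, where we endow it with the $L^{2}$-metric $\|u\|_y^{2}=\int_{X_y}|u|^{2}_{h_L}$. The key input---which I expect to be the main obstacle---is that, since $f$ is a proper \emph{K\"ahler} submersion and $i\Theta_{h_L}(L)\ge 0$, this $L^{2}$-metric is positively curved on $E$ in the Griffiths sense (and Nakano positive when $h_L$ is smooth), so in particular the dual metric on $E^{*}$ is negatively curved in the sense of currents. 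For smooth $h_L$ this is Berndtsson's curvature theorem; the general statement, with $h_L$ only semipositive as a current and $\mathcal{I}(h_L|_{X_y})$ possibly jumping, is what is proved in \cite{zhouzhujdg} (and \cite{bp08} in the projective case) through an Ohsawa--Takegoshi/Bochner--Kodaira--Nakano estimate with a twisted weight on the total space. The subtleties there are exactly that $h_L$ is singular, that the multiplier ideal along fibres may jump, that $f_{*}$ need not be locally free, and that one genuinely uses the K\"ahler hypothesis on $X$ (and Hodge theory on the compact fibres).

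\emph{From positivity of $E$ to fibrewise Bergman kernels.} Granting this, for $x\in X\setminus f^{-1}(Z)$ the evaluation $u\mapsto u(x)$ is a holomorphic section $\mathrm{ev}$ of $f^{*}E^{*}\otimes\mathcal{O}(K_{X/Y}+L)$, and by the very definition (\ref{chaoshi}), measured against $f^{*}$ of the dual $L^{2}$-metric on $f^{*}E^{*}$ and the metric $h_\ell$ on $\mathcal{O}(K_{X/Y}+L)$ coming from $h_L$ together with the relative-volume trivialization, one has $|\mathrm{ev}|^{2}=K_{X_y,L,h_L}$. The standard curvature inequality for the logarithm of the squared norm of a holomorphic section, together with additivity of curvature under tensor products, gives
$$
i\partial\bar\partial\log K_{X_y,L,h_L}\;=\;i\partial\bar\partial\log|\mathrm{ev}|^{2}\;\ge\;-\,\frac{\langle\,i\Theta(f^{*}E^{*})\,\mathrm{ev},\,\mathrm{ev}\,\rangle}{|\mathrm{ev}|^{2}}\;-\;i\Theta_{h_\ell}(K_{X/Y}+L),
$$
and since $f^{*}E^{*}$ is negatively curved the first term on the right is $\ge 0$; rearranging shows precisely that the curvature $i\Theta_{h_\ell}(K_{X/Y}+L)+i\partial\bar\partial\log K_{X_y,L,h_L}$ of the Bergman metric on $\mathcal{O}(K_{X/Y}+L)$ is semipositive over $X\setminus f^{-1}(Z)$. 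Finally, $\log K_{X_y,L,h_L}$ is locally bounded above (any local holomorphic section is a competitor) and is bounded below along degenerating fibres by the Ohsawa--Takegoshi extension property, while $f^{-1}(Z)$ is analytic and nowhere dense and usc was established above; hence the removable-singularity theorem for psh functions propagates plurisubharmonicity across $f^{-1}(Z)$, completing the proof. (Alternatively, one can run the more hands-on argument of \cite{bp08}: restrict to a one-dimensional base and bound $K_{X_{w_0}}(x_0)$ by a fibre average of $\log K_{X_w}$ via an Ohsawa--Takegoshi extension with a suitably twisted weight; but this rests on essentially the same twisted Bochner--Kodaira estimate, which is why I regard that estimate as the real content.)
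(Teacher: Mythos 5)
This statement is imported verbatim from Zhou--Zhu \cite{zhouzhujdg} (and \cite{bp08} in the projective case); the paper offers no proof of its own, so there is nothing internal to match your argument against, and your proposal has to stand on its own. Its reductions (localization, fibre-direction plurisubharmonicity, upper semicontinuity by normal families, local boundedness above of the kernel) are fine, and your closing remarks about needing Ohsawa--Takegoshi to control $\log K$ \emph{on} the bad fibres, not just off them, correctly identify a point that is often glossed over.

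The genuine gap is in what you call the analytic core. You deduce the theorem from Griffiths positivity of the $L^{2}$-metric on $E=f_{*}\bigl(\mathcal{O}(K_{X/Y}+L)\otimes\mathcal{I}(h_L)\bigr)$ for a \emph{singular} metric $h_L$ on a weakly pseudoconvex K\"ahler total space. That input is not Berndtsson's curvature theorem, which requires $h_L$ smooth (and in \cite{ber} a projective or K\"ahler fibration with locally free direct image); for singular $h_L$ the $L^{2}$-metric on $E$ is itself singular, and the positivity of such singular Hermitian metrics on direct images is, in the literature (P\u{a}un--Takayama, Hacon--Popa--Schnell, and indeed the treatment in \cite{bp08}), typically \emph{deduced from} the plurisubharmonic variation of the relative Bergman kernel rather than used to prove it. So, as the primary route, your argument is circular unless you supply an independent proof of that positivity statement in the singular, K\"ahler, non-locally-free setting --- which is at least as hard as the theorem. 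Both cited sources in fact prove the theorem by the route you relegate to a parenthesis: extend the extremal section of the central fibre to nearby fibres by a (twisted, optimal-constant) Ohsawa--Takegoshi theorem and derive the sub-mean-value inequality for $\log K$ along discs transverse to the fibres; in \cite{zhouzhujdg} the whole point is that the needed extension theorem on weakly pseudoconvex K\"ahler manifolds with singular weights is the main result of that paper. A complete proof should make that extension argument, not the direct-image positivity, the load-bearing step.
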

\subsection{Approximation of Lelong numbers by complex singularity exponents}
In {\cite{guan-zhou1}}, the author has established a new approximation
process of plurisubharmonic functions.

\begin{theorem}[Theorem 1.3 in \cite{guan-zhou1}]
  \label{dd}Let $\varphi$ be a plurisubharmonic function on an open set $D
  \subset \mathbb{C}^n .$ Then for any $k \in \mathbb{N}$, there exists a
  plurisubharmonic function $\varphi_k$ defined on a neighborhood of $D \times
  \{ o \} \subset \mathbb{C}^n \times \mathbb{C}^k$ with coordinates $(z,
  \xi)$, such that
  \begin{enumerate}
    \item $\varphi_k (z, o) = \varphi (z)$,
    
    \item $\nu_{(z, o)} (\varphi_k) = \nu_z (\varphi)$,
    
    \item $\frac{k}{\nu_z (\varphi)} \leq c_{(z, o)} (\varphi_k) \leq \frac{n
    + k}{\nu_z (\varphi)}$
  \end{enumerate}
  for any $z \in D$, where $o$ is the origin in $\mathbb{C}^k$. One may take
  for instance
$$
    \varphi_k (z, \xi) = \sup_{\zeta \in B (z, | \xi |)  }
    \varphi (\zeta) .
$$
\end{theorem}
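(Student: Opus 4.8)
The plan is to take the explicit candidate suggested in the statement,
$$
\varphi_k(z,\xi):=\sup_{\zeta\in \overline{B(z,|\xi|)}}\varphi(\zeta),
$$
defined on $\{(z,\xi)\in D\times\mathbb{C}^k:\overline{B(z,|\xi|)}\subset D\}$ (a neighbourhood of $D\times\{o\}$), replacing it if necessary by its upper semi-continuous regularization, and to check the three properties. First I would show $\varphi_k$ is plurisubharmonic: for $\xi\neq o$ one has $\{L\xi:L\in\mathrm{Hom}_{\mathbb{C}}(\mathbb{C}^k,\mathbb{C}^n),\ \|L\|\leq 1\}=\overline{B(0,|\xi|)}$ (rank one $L$ already suffice), so $\varphi_k(z,\xi)=\sup_{\|L\|\leq 1}\varphi(z+L\xi)$ is a locally bounded above supremum of plurisubharmonic functions; since $\varphi$ is upper semi-continuous and the balls vary continuously in the Hausdorff metric, this supremum is itself upper semi-continuous, hence plurisubharmonic. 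Property $(1)$ is immediate because $\overline{B(z,o)}=\{z\}$.

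For $(2)$ and the upper bound in $(3)$ I would use the classical one-variable facts about $M_{z_0}(r):=\sup_{|\zeta-z_0|\leq r}\varphi(\zeta)$: it is increasing and convex in $\log r$ with $M_{z_0}(r)/\log r\to \nu_{z_0}(\varphi)$ as $r\to 0^+$, so in particular $M_{z_0}(r)\leq \nu_{z_0}(\varphi)\log r+C$ and $M_{z_0}(r)\geq(\nu_{z_0}(\varphi)+\varepsilon)\log r$ for all small $r$. Since $\overline{B(z,|\xi|)}\subset\overline{B(z_0,|z-z_0|+|\xi|)}$ we get $\varphi(z)=\varphi_k(z,o)\leq\varphi_k(z,\xi)\leq M_{z_0}(|z-z_0|+|\xi|)$; together with the basic restriction property of Lelong numbers along $\{\xi=o\}$ (which gives $\nu_{(z_0,o)}(\varphi_k)\leq\nu_{z_0}(\varphi_k(\cdot,o))=\nu_{z_0}(\varphi)$) this yields $(2)$. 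For the upper bound in $(3)$, from $\varphi_k(z,\xi)\leq\nu_{z_0}(\varphi)\log(|z-z_0|+|\xi|)+C$ we obtain $e^{-2c\varphi_k}\geq e^{-2cC}(|z-z_0|+|\xi|)^{-2c\nu_{z_0}(\varphi)}$, and polar coordinates in $\mathbb{C}^{n+k}$ show this is non-integrable near $(z_0,o)$ once $c\,\nu_{z_0}(\varphi)\geq n+k$; hence $c_{(z_0,o)}(\varphi_k)\leq(n+k)/\nu_{z_0}(\varphi)$.

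The key point is the lower bound $c_{(z_0,o)}(\varphi_k)\geq k/\nu_{z_0}(\varphi)$. Here I would restrict $\varphi_k$ to the \emph{special} slice $Y:=\{z=z_0\}\cong\mathbb{C}^k_\xi$, on which it becomes the radial plurisubharmonic function $\xi\mapsto M_{z_0}(|\xi|)$. Its complex singularity exponent at $o$ is computed directly: with $t=|\xi|=e^u$,
$$
\int_{|\xi|<\delta}e^{-2cM_{z_0}(|\xi|)}\,d\lambda_k(\xi)=\omega_{2k}\int_{-\infty}^{\log\delta}e^{2ku-2cM_{z_0}(e^u)}\,du,
$$
where $\omega_{2k}$ is the area of $S^{2k-1}$, and the two inequalities for $M_{z_0}$ above show this integral converges iff $c\,\nu_{z_0}(\varphi)<k$, i.e. $c_o(\varphi_k|_Y)=k/\nu_{z_0}(\varphi)$. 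Finally I would invoke the restriction formula for multiplier ideals (Theorem \ref{resc}) applied to $c\varphi_k$: it gives $\mathcal{I}(c\varphi_k|_Y)\subset\mathcal{I}(c\varphi_k)|_Y$, so if $e^{-2c\varphi_k|_Y}$ is locally integrable near $o$ then the germ $1$ on $Y$ is the restriction of some $H\in\mathcal{I}(c\varphi_k)_{(z_0,o)}$ with $H(z_0,o)=1$, forcing $e^{-2c\varphi_k}$ to be locally integrable near $(z_0,o)$; thus $c_{(z_0,o)}(\varphi_k)\geq c_o(\varphi_k|_Y)=k/\nu_{z_0}(\varphi)$, which completes $(3)$.

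I expect this last step to be the main obstacle. A direct pointwise estimate on the total space---say via the sub-mean value inequality $M_z(|\xi|)\geq\frac{1}{|B(z,|\xi|)|}\int_{B(z,|\xi|)}\varphi$, followed by Jensen and a change of the order of integration---only bounds the integrability of $e^{-2c\varphi_k}$ near $(z_0,o)$ by that of $e^{-2c\varphi}$ near $z_0$, which is far too weak: it yields no more than $c_{(z_0,o)}(\varphi_k)\geq c_{z_0}(\varphi)\geq 1/\nu_{z_0}(\varphi)$. The extra gain by the factor $k$ is genuinely a consequence of the radial behaviour in the $\xi$-variables, and routing it through the slice $Y$ by means of the $L^2$ extension/restriction theorem, rather than through an explicit integral estimate on $D\times\mathbb{C}^k$, is what makes it work; one should also be mildly careful with the upper semi-continuity of the raw supremum defining $\varphi_k$ and with the uniformity (for $z$ near $z_0$) of the elementary comparison estimates.
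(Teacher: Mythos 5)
Your proof is correct; note that the paper itself gives no proof of this statement but simply quotes it from Guan--Zhou, and your argument is essentially the one in that reference: the explicit sup-construction, the elementary convexity estimates on $M_{z_0}(r)=\sup_{\overline{B(z_0,r)}}\varphi$ for (1), (2) and the upper bound in (3), and the Ohsawa--Takegoshi restriction property applied to the slice $\{z=z_0\}$ for the crucial lower bound $c_{(z_0,o)}(\varphi_k)\geq k/\nu_{z_0}(\varphi)$. Your closing remark is also on target: the factor $k$ really does come from the radial behaviour in $\xi$ on that slice, and the $L^2$ extension theorem is the standard way to transport it to the total space.
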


Their motivation is to express the upper-level set $E_c (\varphi)= \{ z
\in D ; \nu_z (\varphi) \geq c \}$ as
$$
  E_c (\varphi) = \bigcap_{k \in \mathbb{N}} \{ z \in D ; c_{(z, o)} ((n + k)
  \varphi_k) \leq c^{- 1} \}
$$
by 3) in Theorem \ref{dd} and consequently, give Siu's analyticity theorem another elegant proof. In our
case, a slight modification of the proof of Theorem \ref{dd} implies

\begin{lemma}
  \label{appr}Let $\varphi$ be a plurisubharmonic function defined on
  $\Delta^n_z \times \Delta^m_w$. Then there exists a plurisubharmonic
  function $\varphi_k$ defined on a neighborhood of $\Delta^n_z \times
  \Delta^m_w \times \{ o \} \subset \mathbb{C}^n \times \mathbb{C}^m \times
  \mathbb{C}^k$ with coordinates $(z, w, \xi)$, such that:
  \begin{enumerate}
    \item $\varphi_k (z, w, o) = \varphi (z, w)$,
    
    \item $\nu_{(z, o)} (\varphi_{k, w}) = \nu_z (\varphi_w)$,
    
    \item $\frac{k}{\nu_z (\varphi_w)} \leq c_{(z, o)} (\varphi_{k, w}) \leq
    \frac{n + k}{\nu_z (\varphi_w)}$
  \end{enumerate}
  for any $(z, w) \in \Delta^n_z \times \Delta^m_w$, where $o$ is the origin
  in $\mathbb{C}^k$ and $\varphi_{k, w_0} := \varphi_k |_{w = w_0} =
  \varphi_{k} (\cdot, w_0) $. If we further assume that $e^{\varphi}$ is
  H{\"o}lder continuous with respect to $w$, then for any $(z, w_1), (z, w_2)
  \in \Delta^n_z \times \Delta^m_w$,
  \begin{equation}
    e^{\varphi_k (z, w_1)} \leq e^{\varphi_k (z, w_2)} + C | w_1 - w_2
    |^{\alpha} \label{eqd} 
  \end{equation}
  holds for some uniform $C$ and $\alpha$. One may take for instance
  \begin{equation}
    \varphi_k (z, w, \xi) = \sup_{\zeta \in B (z, | \xi |) 
    } \varphi (\zeta, w), \label{o;}
  \end{equation}
  where $B (z, | \xi |)$ is the ball centered at $z$ with radius $|\xi|$ in $\mathbb{C}^n$.
\end{lemma}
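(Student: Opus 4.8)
The plan is to verify directly that the explicit candidate $\varphi_k$ given by formula (\ref{o;}), with $B(z,|\xi|)$ read as the \emph{closed} ball $\{\zeta\in\mathbb{C}^n:|\zeta-z|\le|\xi|\}$, enjoys all four conclusions; since its restriction to each fixed slice $\{w=w_0\}$ is literally the function constructed in Theorem \ref{dd}, the only genuinely new input is joint plurisubharmonicity in $(z,w,\xi)$ together with the estimate (\ref{eqd}). First I would fix the domain: let
\[
U:=\{(z,w,\xi)\in\mathbb{C}^n\times\mathbb{C}^m\times\mathbb{C}^k:\ \overline{B(z,|\xi|)}\subset\Delta^n_z,\ w\in\Delta^m_w\},
\]
which is open and contains $\Delta^n_z\times\Delta^m_w\times\{o\}$, hence is the advertised neighborhood; on $U$ the supremum in (\ref{o;}) is taken over a compact set on which $\varphi(\cdot,w)$ is defined, so $\varphi_k$ makes sense and $\varphi_k\ge\varphi(z,w)$ (take $\zeta=0$), while conclusion $(1)$ is immediate because $B(z,o)=\{z\}$.

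The main point is plurisubharmonicity. Here I would rewrite the ball-supremum as an upper envelope of compositions of $\varphi$ with holomorphic maps, using that for each fixed $\xi$ one has $\{\zeta\in\mathbb{C}^n:|\zeta|\le|\xi|\}=\{A\xi:A\in\mathrm{Hom}_{\mathbb{C}}(\mathbb{C}^k,\mathbb{C}^n),\ \|A\|_{\mathrm{op}}\le1\}$ (for the non-obvious inclusion, given $\eta$ with $|\eta|\le|\xi|$ and $\xi\neq o$, take the rank-one map $\zeta\mapsto|\xi|^{-2}\big(\sum_j\overline{\xi_j}\,\zeta_j\big)\eta$). Thus
\[
\varphi_k(z,w,\xi)=\sup_{\|A\|_{\mathrm{op}}\le1}\varphi\big(z+A\xi,\,w\big).
\]
For every such $A$ the map $(z,w,\xi)\mapsto(z+A\xi,w)$ is $\mathbb{C}$-affine, hence holomorphic, so each $(z,w,\xi)\mapsto\varphi(z+A\xi,w)$ is psh on $U$; since the parameter set $\{\|A\|_{\mathrm{op}}\le1\}$ is compact and $(z,w,\xi,A)\mapsto\varphi(z+A\xi,w)$ is upper semicontinuous, the partial supremum $\varphi_k$ is itself upper semicontinuous and locally bounded above, hence a supremum of psh functions that coincides with its own usc regularization; therefore $\varphi_k\in\mathrm{PSH}(U)$ (and $\varphi_k\not\equiv-\infty$ since $\varphi_k|_{\xi=o}=\varphi$). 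This is exactly the place where the argument of Theorem \ref{dd} is ``slightly modified'': the extra base variable $w$ is simply carried along untouched, which is legitimate because the ball in (\ref{o;}) moves only the $z$-coordinate.

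Conclusions $(2)$ and $(3)$ I would then read off from Theorem \ref{dd} with no further work: for fixed $w_0$, the function $\varphi_{w_0}=\varphi(\cdot,w_0)$ is psh on $\Delta^n_z$, and the slice $\varphi_{k,w_0}(z,\xi)=\varphi_k(z,w_0,\xi)$ is precisely the function $\sup_{\zeta\in B(z,|\xi|)}\varphi_{w_0}(\zeta)$ produced by Theorem \ref{dd} out of $\varphi_{w_0}$; hence $\nu_{(z,o)}(\varphi_{k,w_0})=\nu_z(\varphi_{w_0})$ and $\tfrac{k}{\nu_z(\varphi_{w_0})}\le c_{(z,o)}(\varphi_{k,w_0})\le\tfrac{n+k}{\nu_z(\varphi_{w_0})}$ are items $(2)$ and $(3)$ of Theorem \ref{dd} applied to $\varphi_{w_0}$.

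Finally, for (\ref{eqd}): since $t\mapsto e^t$ is increasing and continuous, $e^{\varphi_k(z,w,\xi)}=\sup_{|\zeta|\le|\xi|}e^{\varphi(z+\zeta,w)}$; for each admissible $\zeta$ the points $(z+\zeta,w_1)$ and $(z+\zeta,w_2)$ share the same $z$-coordinate, so the H\"older-in-$w$ hypothesis (\ref{1.1}) on a compact $K$ containing the region under consideration gives $e^{\varphi(z+\zeta,w_1)}\le e^{\varphi(z+\zeta,w_2)}+C_K|w_1-w_2|^{\alpha_K}$, and taking the supremum over $|\zeta|\le|\xi|$ on both sides yields (\ref{eqd}) with $C=C_K$, $\alpha=\alpha_K$ (uniform once the region is fixed). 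The only real obstacle in all of this is the joint plurisubharmonicity, i.e.\ recognizing $\varphi_k$ as the upper envelope $\sup_{\|A\|_{\mathrm{op}}\le1}\varphi(z+A\xi,w)$; everything else is either a direct appeal to Theorem \ref{dd} along slices or the elementary estimate just indicated.
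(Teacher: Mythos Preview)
Your proposal is correct and follows essentially the same route as the paper: the paper simply cites Theorem \ref{dd} for items (1)--(3) (implicitly including joint plurisubharmonicity of the explicit $\varphi_k$ in (\ref{o;})) and then proves (\ref{eqd}) by exactly your argument---apply the H\"older bound pointwise to $e^{\varphi(\zeta,w)}$ and take the supremum over $\zeta\in B(z,|\xi|)$. The only difference is that you spell out the joint plurisubharmonicity in $(z,w,\xi)$ via the rewriting $\varphi_k(z,w,\xi)=\sup_{\|A\|_{\mathrm{op}}\le1}\varphi(z+A\xi,w)$, whereas the paper regards this as absorbed in the reference to Theorem~\ref{dd}; this extra detail is welcome but not a different method.
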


\begin{proof}
  Since condition 1, 2, 3 has already been proved in Theorem \ref{dd}, our
  mission is to prove the second result (\ref{eqd}). Choose $\varphi_k$ as in
  (\ref{o;}), H{\"o}lder continuity of $e^{\varphi}$ yields that:
  \begin{equation}
    e^{\varphi  (\zeta, w_1)} \leq e^{\varphi  (\zeta, w_2)} + C | w_1 -
    w_2 |^{\alpha} \leq e^{\varphi_k (z, w_2)} + C | w_1 - w_2
    |^{\alpha} \label{sp}
  \end{equation}
  holds for any $\zeta \in B (z, | \xi |)  $. Once taking
  the supremum on the left hand side of (\ref{sp}), we get the desired result.
  Exchanging the position of $w_1$ and $w_2$, we will see that $e^{\varphi_k}$
  is still H{\"o}lder continuous.
\end{proof}

We remark that once Question \ref{xiaochou} has an affirmative answer, Theorem
\ref{se} will be a direct corollary from the following proposition and Lemma \ref{appr}.

From now on, we fix the notation on $\varphi_k$ as in (\ref{o;}).

\begin{proposition}\label{wanmei}
  \label{2.6}Let $\varphi$ be a plurisubharmonic function defined on
  $\Delta^n_z \times \Delta^m_w$. Then
$$
    X_c (\varphi) = \bigcap_{k \in \mathbb{N}} ( Y_{c^{- 1}} ((n + k)
    \varphi_k)  \bigcap \{ \xi = o \} ),
$$
where $Y_{c^{- 1}} ((n + k)
    \varphi_k)$ is interpreted as the following set
 \begin{equation}\label{interpret}
    \bigcup_{w\in\Delta^m}\{(z,w,\xi)\in\Delta^n\times\Delta^m\times\Delta^k;c_{(z,\xi)}(\varphi_{k,w})\leq c^{-1}, |\xi|<1-|z|\}.
  \end{equation}
\end{proposition}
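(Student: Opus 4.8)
The plan is to prove the set-theoretic identity in Proposition \ref{wanmei} by a double inclusion, using the three properties of $\varphi_k$ from Lemma \ref{appr} together with the elementary one-dimensional reciprocity between Lelong numbers and complex singularity exponents. Recall that for a plurisubharmonic function $u$ of a single complex variable one has $c_0(u) = 1/\nu_0(u)$, and more generally the bound $c_x(u)\geq 1/\nu_x(u)$ in arbitrary dimension, while property 3 of Lemma \ref{appr} says precisely that $c_{(z,o)}(\varphi_{k,w})$ is squeezed between $k/\nu_z(\varphi_w)$ and $(n+k)/\nu_z(\varphi_w)$. The point of multiplying by $(n+k)$ is to renormalize so that $c_{(z,o)}((n+k)\varphi_{k,w}) = c_{(z,o)}(\varphi_{k,w})/(n+k)$ lies in $[\,k/((n+k)\nu_z(\varphi_w)),\,1/\nu_z(\varphi_w)\,]$; as $k\to\infty$ the lower bound converges to $1/\nu_z(\varphi_w)$, so the limit pins down the value exactly.

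For the inclusion $X_c(\varphi)\subset\bigcap_k\big(Y_{c^{-1}}((n+k)\varphi_k)\cap\{\xi=o\}\big)$: fix $(z_0,w_0)\in X_c(\varphi)$, so $\nu_{z_0}(\varphi_{w_0})\geq c$. Then for every $k$ we have $c_{(z_0,o)}((n+k)\varphi_{k,w_0}) \leq 1/\nu_{z_0}(\varphi_{w_0}) \leq 1/c = c^{-1}$ by property 3. Since $(z_0,w_0,o)$ trivially satisfies $|\xi| = 0 < 1-|z_0|$ (note $|z_0|<1$ as it lies in the open polydisc) and lies on $\{\xi=o\}$, it belongs to the set in (\ref{interpret}) for each $k$, hence to the intersection. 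For the reverse inclusion, suppose $(z_0,w_0,o)$ lies in $Y_{c^{-1}}((n+k)\varphi_k)\cap\{\xi=o\}$ for every $k$; since it sits on the slice $\{\xi=o\}$ the membership forces $c_{(z_0,o)}((n+k)\varphi_{k,w_0})\leq c^{-1}$, i.e. $c_{(z_0,o)}(\varphi_{k,w_0})\leq (n+k)/c$. Combining with the lower bound $c_{(z_0,o)}(\varphi_{k,w_0})\geq k/\nu_{z_0}(\varphi_{w_0})$ from property 3 gives $k/\nu_{z_0}(\varphi_{w_0})\leq (n+k)/c$, that is $\nu_{z_0}(\varphi_{w_0})\geq ck/(n+k)$ for every $k$; letting $k\to\infty$ yields $\nu_{z_0}(\varphi_{w_0})\geq c$, so $(z_0,w_0)\in X_c(\varphi)$. (One should also keep in mind the degenerate case $\nu_{z_0}(\varphi_{w_0})=0$, where property 3 is read as $c_{(z,o)}(\varphi_{k,w})=+\infty$ and $(z_0,w_0)$ is excluded from $X_c(\varphi)$ for every $c>0$, consistently with the fact that $c_{(z_0,o)}((n+k)\varphi_{k,w_0})=+\infty>c^{-1}$ so the point is never in $Y_{c^{-1}}$.)

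The one genuine subtlety — and the step I expect to need the most care — is the bookkeeping around the auxiliary variable $\xi$ and the domain constraint $|\xi|<1-|z|$ in the interpretation (\ref{interpret}). The function $\varphi_k(z,w,\xi)=\sup_{\zeta\in B(z,|\xi|)}\varphi(\zeta,w)$ is only defined (and plurisubharmonic) on the neighborhood of $\Delta^n_z\times\Delta^m_w\times\{o\}$ where the ball $B(z,|\xi|)$ stays inside $\Delta^n$, which is exactly why the set $Y_{c^{-1}}((n+k)\varphi_k)$ has to be taken with the restriction $|\xi|<1-|z|$ rather than over the full polydisc $\Delta^k$; I would state this domain-of-definition issue explicitly at the start so that the symbol $c_{(z,\xi)}(\varphi_{k,w})$ in (\ref{interpret}) is unambiguous. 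Once that is settled, the proof is essentially just the inequality chasing above plus the observation that intersecting with $\{\xi=o\}$ reduces everything to the slice $\xi=o$ where properties 1–3 of Lemma \ref{appr} apply verbatim. No $L^2$ machinery or analyticity input is needed here; this proposition is purely a formal reduction, and its role is to convert the sub-level-set description of $X_c(\varphi)$ into one amenable to the analyticity theorems proved later.
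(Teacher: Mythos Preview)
Your proposal is correct and follows exactly the same approach as the paper, which simply states that the result ``follows immediately from 3) in Lemma \ref{appr}.'' You have spelled out in detail the double-inclusion argument that the paper leaves implicit, including the careful handling of the domain constraint $|\xi|<1-|z|$ and the degenerate case $\nu_{z_0}(\varphi_{w_0})=0$; these are welcome clarifications but do not deviate from the paper's route.
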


\begin{proof}
  This result follows immediately from 3) in Lemma \ref{appr}.
\end{proof}

\subsection{Tools from birational geometry}

The main goal in this subsection is to recall some basic tools from birational
geometry. 

Let $X$ be a smooth projective variety, and
$\mathfrak{a} \subset \mathcal{O}_X$ be an ideal sheaf and $c > 0$. A
projective birational map $\mu : X' \rightarrow X$ is called a {{log
resolution}} of $\mathfrak{a}$ if
\begin{equation}\label{reso}
  \mu^{- 1} \mathfrak{a} = \mathfrak{a} \cdot O_{X'} = \mathcal{O }_{X'} (- F)
\end{equation}
for some effective divisor $F$ on $X'$ such that ${Exc} (\mu) + F$ has
simple normal crossing support. The existence of log resolution of $\mathfrak{a}$ is
guaranteed by Hironaka's resolution of singularities ({\cite{hironaka}}). Let us recall the analytic version of this important result.

\begin{theorem}[{{Hironaka's desingularization theorem, cf. main theorem in \cite{bl}}}]
  \label{hiro} Let $X$ be a
  complex manifold and $S$ be a(n) (analytic) subvariety in $X$. Then there exists a locally
  finite sequence of blow-ups $\mu_j : X_{j + 1} \rightarrow X_j$ $(j = 1, 2,
  \ldots)$ with smooth centers $Y_j$ such that:
  \begin{enumerate}
    \item Each component of $Y_j$ lies either in $(S_j)_{{sing}}$ or $S_j
    \bigcap E_j$, where $S_1 = S$, $S_{j + 1}$ denotes the strict transform
    of $S_j$ via $\mu_j$, $E_1 = \varnothing$ and $E_{j + 1}$ denotes the
    exceptional divisor $\mu^{- 1}_j (Y_j  \bigcup E_j)$;
    
    \item Let $S'$ and $E'$ denote the final strict transform of $S$ and the
    exceptional divisor respectively, then the underlying point set $|S' |$ is
    smooth and simultaneously $|S' |$ and $E'$ have only normal crossings.
  \end{enumerate}
\end{theorem}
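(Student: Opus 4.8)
The plan is to deduce the statement from the standard characteristic-zero \emph{embedded resolution / principalization} algorithm applied to the ideal sheaf $\mathcal{I}_S$ of $S$, and to observe that every construction in that algorithm is local, canonical, and compatible with the complex-analytic structure, so that it globalises over the non-compact manifold $X$. First I would pass from $S$ to its ideal sheaf and, more generally, work with \emph{marked ideals} (basic objects) $(\mathcal{J},b)$ on a manifold, carried along with a simple normal crossings (s.n.c.) divisor $E$ that records the exceptional locus produced so far; the aim is to reach a situation where $\mathcal{J}=\mathcal{O}(-F)$ with $F+E$ of s.n.c.\ support, after which the strict transform $S'$ of $S$ is automatically smooth and meets $E'$ normally. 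A blow-up is declared \emph{admissible} if its centre is smooth, is contained in the top-order locus $\{\mathrm{ord}\,\mathcal{J}\ge b\}$, and has normal crossings with $E$; condition (1) of the theorem will then follow because at a smooth point of $S_j$ that is transverse to $E_j$ the local invariant already takes its minimal value (order $1$, trivial coefficient ideal, s.n.c.\ with $E_j$), so such a point lies in none of the admissible centres.

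The heart of the argument is an induction on $\dim X$. Near a point of the maximal-order locus I would produce, using that we are in characteristic zero, a smooth hypersurface of \emph{maximal contact}: if $d=\max\mathrm{ord}\,\mathcal{J}$ and $g$ is a local generator for which $\partial^{d-1}g$ does not vanish identically, a suitable coordinate $x_n$ (a Tschirnhaus/Weierstrass change) cuts out an $H$ such that the top-order locus of $\mathcal{J}$, and of its strict transforms under admissible blow-ups, remains inside $H$. Restricting the \emph{coefficient ideal} $\mathcal{C}(\mathcal{J})=\sum_{j=0}^{d-1}(\mathcal{D}^{j}\mathcal{J})^{\,d!/(d-j)}$ to $H$ gives a marked ideal in one fewer dimension, to which the inductive hypothesis applies, and admissible blow-ups on $H$ are induced by admissible blow-ups on $X$. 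Packaging $d$ together with the invariant of $\bigl(\mathcal{C}(\mathcal{J})|_{H},\,\cdot\,\bigr)$ and the number of ``old'' components of $E$ produces an upper-semicontinuous local invariant $\mathrm{inv}_x$ with values in a well-ordered set of bounded-length tuples of rationals, whose maximal locus is smooth; blowing up that maximal locus strictly decreases $\max_x\mathrm{inv}_x$, and well-foundedness forces termination.

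Two points then remain. First, independence of choices: the hypersurface of maximal contact is highly non-unique, so I would verify (this is the longest piece of bookkeeping) that $\mathrm{inv}_x$, and hence the prescribed sequence of centres, is independent of it; this canonicity is exactly what allows the local resolutions to be glued into a single sequence of blow-ups on all of $X$. Second, the analytic and non-compact aspects: blowing up a smooth analytic centre stays within complex manifolds, the s.n.c.\ conditions are preserved by admissibility, and since the algorithm is canonical and purely local it runs on each relatively compact open subset and terminates there after finitely many steps, so patching yields the asserted \emph{locally} finite sequence $\mu_j:X_{j+1}\to X_j$ with centres $Y_j$ contained in $(S_j)_{\mathrm{sing}}\cup(S_j\cap E_j)$ and final transforms $|S'|$ smooth and normal-crossing with $E'$. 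The main obstacle is the dimension-reducing core — constructing maximal contact in characteristic zero, calibrating the exponents in the coefficient ideal so that $\mathrm{inv}$ genuinely drops and lives in a well-founded set, and proving smoothness of the maximal locus — together with the choice-independence needed for globalisation; the remaining verifications (stability of admissibility under blow-up, the bookkeeping for condition (1), and the patching over $X$) are formal.
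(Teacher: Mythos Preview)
The paper does not prove this theorem at all: it is quoted as a black box with the citation ``cf.\ main theorem in \cite{bl}'' (Bierstone--Milman), and is used later only as an input to the proof of Theorem~\ref{third}. There is therefore no ``paper's own proof'' to compare against.

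That said, your outline is precisely the modern algorithmic approach of the cited reference (and of Encinas--Villamayor, W{\l}odarczyk, Koll\'ar): principalize $\mathcal{I}_S$ by working with marked ideals $(\mathcal{J},b)$ together with an accumulated s.n.c.\ boundary $E$, reduce dimension via a maximal-contact hypersurface available in characteristic zero, pass to a coefficient ideal, package everything into a well-ordered upper-semicontinuous invariant whose maximal locus is smooth and is the next centre, and use canonicity of the invariant to globalise and obtain local finiteness on a non-compact $X$. Your justification of condition~(1) (that centres avoid smooth points of $S_j$ transverse to $E_j$ because the invariant is already minimal there) is the correct mechanism. So your proposal is consistent with the source the paper invokes; just be aware that what you call ``the longest piece of bookkeeping'' --- independence of the invariant from the choice of maximal-contact hypersurface --- is genuinely the hard core of the argument and cannot be waved through in a complete write-up.
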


\begin{definition}
  [{{Multiplier ideals associated to $\mathfrak{a}$ and $c$, Definition
  9.2.3 in {\cite{Larbook}}}}] Fix a log resolution $\mu : X' \rightarrow X$
  as in (\ref{reso}), the multiplier ideal associated to $\mathfrak{a}$ and $c$ is
  defined as
  \begin{equation}
    \mathcal{J} (X, \mathfrak{a}^c) := \mu_{\ast}\mathcal{O}_{X'} (K_{X' / X} - [c \cdot
    F]), \label{21155}
  \end{equation}
  where $[\cdot]$ means the round down process.
\end{definition}

It is clear that definition (\ref{21155}) does not depend on the choice of the
log resolutions (9.2.18 of {\cite{Larbook}}).

 Let $U\subset X$ be an affine coordinate and $\{f_j\}_{j=1}^{N}\in\mathbb{C}[X]$ be the generators of $\mathfrak{a}|_U$. Let $\varphi =
\varphi_{\mathfrak{a}}$ be the corresponding plurisubharmonic function defined as
$$
  \varphi =\varphi_{\mathfrak{a}}= \frac{1}{2} \cdot \log (\sum_{j = 1}^N |f_j |^2).
$$
Then it is easy to see
that $\mathcal{I} (c \cdot \varphi ) = \mathcal{J} (X, \mathfrak{a}^c)^{\textbf{An}}$, where $\cdot^{\textbf{An}}$ is the analytification process for (algebraic) coherent sheaves (cf. Theorem 9.3.42 in \cite{Larbook}). Conversely, the computation of $\mathcal{I}(\varphi)$ for $\varphi$ with analytic singularities can be reduced to a purely algebraic problem by using Hironaka's resolution of (analytic) singularities and the formula (\ref{21155}).

The following important theorem focuses on how (algebraic)
multiplier ideals vary in families.

\begin{theorem}
  \label{imp} {{(Theorem 9.5.35 and Example 9.5.37 in {\cite{Larbook}})}}
  Let $X$ and $T$ be two non-singular (irreducible, algebraic) varieties, $\mathfrak{a}
  \subset \mathcal{O}_X$ be an ideal, and $p : X \rightarrow T$ a smooth
  surjective mapping. Assume that 
  \begin{equation}\label{assss}
  \mathfrak{a}_t := \mathfrak{a} \cdot
  X_t\neq 0 \emph{ for \textbf{all} } t \in T,
  \end{equation}
  then
  \begin{equation}
    \mathcal{J} (X, \mathfrak{a }^c) |_{X_t} = \mathcal{J} (X_t, \mathfrak{a
    }_t^c) \label{220}
  \end{equation}
  holds for general $t \in T$ \emph{(i.e., outside some Zariski closed subsets of $T$)} and every $c > 0$.
\end{theorem}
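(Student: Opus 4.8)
The plan is to reduce everything to a single fixed log resolution of $\mathfrak{a}$ and then track, fibrewise, every piece of data entering the definition (\ref{21155}). First I would fix, by Hironaka's theorem (Theorem \ref{hiro}), a log resolution $\mu\colon X'\to X$ of $\mathfrak{a}$ as in (\ref{reso}), so that $\mathfrak{a}\cdot\mathcal{O}_{X'}=\mathcal{O}_{X'}(-F)$ with $F=\sum_i a_iE_i$ effective, $\mathrm{Exc}(\mu)+F$ of simple normal crossing support, and $\mathcal{J}(X,\mathfrak{a}^c)=\mu_*\mathcal{O}_{X'}(K_{X'/X}-[cF])$. Next, working in characteristic zero, I would produce a dense Zariski open $U\subseteq T$ that serves for all $c$ at once: generic smoothness applied to $p\circ\mu$ and to the restriction of $p\circ\mu$ to each stratum of $\mathrm{Exc}(\mu)+F$, together with generic flatness of $X'\to T$, yields a $U$ such that for every $t\in U$ the fibres $X_t$ and $X'_t:=\mu^{-1}(X_t)$ are smooth, $p\circ\mu$ is smooth over $t$ (so $X'$ is flat over $T$ near $X'_t$), no component of $F$ or of $\mathrm{Exc}(\mu)$ contains a component of $X'_t$, and $(\mathrm{Exc}(\mu)+F)|_{X'_t}$ still has simple normal crossing support. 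Then $F_t:=F|_{X'_t}=\sum_i a_i(E_i|_{X'_t})$ is an honest effective divisor, $\mathfrak{a}_t\cdot\mathcal{O}_{X'_t}=\mathcal{O}_{X'_t}(-F_t)$, $\mathrm{Exc}(\mu_t)\subseteq(\mathrm{Exc}(\mu))|_{X'_t}$, so $\mu_t:=\mu|_{X'_t}\colon X'_t\to X_t$ is a log resolution of $\mathfrak{a}_t$ and $\mathcal{J}(X_t,\mathfrak{a}_t^c)=\mu_{t*}\mathcal{O}_{X'_t}(K_{X'_t/X_t}-[cF_t])$. (Hypothesis (\ref{assss}) guarantees $\mathcal{J}(X_t,\mathfrak{a}_t^c)$ is defined for every $t$; for the conclusion, which only concerns general $t$, it is enough that $\mathfrak{a}_t\neq 0$ for general $t$, which is automatic since the zero scheme $V(\mathfrak{a})$ is a proper closed subset and therefore contains no general fibre.)

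Now I would check the two compatibilities with restriction and conclude. Since $p$ is smooth, $K_{X/T}|_{X_t}=K_{X_t}$; since $p\circ\mu$ is smooth over $U$, $K_{X'/T}|_{X'_t}=K_{X'_t}$; writing $K_{X'/X}=K_{X'/T}-\mu^*K_{X/T}$ gives $K_{X'/X}|_{X'_t}=K_{X'_t/X_t}$. Because the round-down $[cF]=\sum_i[ca_i]E_i$ is taken coefficientwise and the $a_i$ are unchanged by restriction, $[cF]|_{X'_t}=\sum_i[ca_i](E_i|_{X'_t})=[cF_t]$ for \emph{every} $c>0$ --- this is exactly why $U$ may be chosen independent of $c$ (cf.\ Example 9.5.37 in \cite{Larbook}). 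Hence $\mathcal{O}_{X'}(K_{X'/X}-[cF])|_{X'_t}=\mathcal{O}_{X'_t}(K_{X'_t/X_t}-[cF_t])$. By the Local Vanishing Theorem (Theorem 9.4.1 in \cite{Larbook}, a form of Kawamata--Viehweg vanishing for $\mu$), $R^q\mu_*\mathcal{O}_{X'}(K_{X'/X}-[cF])=0$ and $R^q\mu_{t*}\mathcal{O}_{X'_t}(K_{X'_t/X_t}-[cF_t])=0$ for all $q>0$. Since $\mathcal{O}_{X'}(K_{X'/X}-[cF])$ is a line bundle flat over $T$ on $p^{-1}(U)$ and $X_t\hookrightarrow X$ is cut out by a regular sequence pulled back from $T$ that remains regular on $X'$ (by that flatness), cohomology and base change for the proper morphism $\mu$ applies and the vanishing of higher direct images forces the formation of $\mu_*$ to commute with $\cdot\,\mathcal{O}_{X_t}$ with no torsion contribution. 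Therefore, for $t\in U$,
\[ \mathcal{J}(X,\mathfrak{a}^c)\cdot\mathcal{O}_{X_t}=\mu_{t*}\mathcal{O}_{X'_t}(K_{X'_t/X_t}-[cF_t])=\mathcal{J}(X_t,\mathfrak{a}_t^c), \]
which, $U$ being independent of $c$, is the identity (\ref{220}) for general $t\in T$ and every $c>0$.

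I expect the main obstacle to be the base-change step of the last paragraph: because $X_t\hookrightarrow X$ is not a flat morphism one cannot interchange $\mu_*$ with restriction to $X_t$ formally, and one must genuinely combine the flatness of $X'$ (equivalently of the line bundle $\mathcal{O}_{X'}(K_{X'/X}-[cF])$) over $T$ with the local vanishing $R^{>0}\mu_*=0$ to license it. A secondary technical point is the stratum-by-stratum application of generic smoothness needed to ensure that the restricted morphism $\mu_t$ really remains a log resolution of $\mathfrak{a}_t$ for general $t$.
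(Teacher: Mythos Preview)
Your proposal is correct and follows essentially the same approach that the paper indicates: the paper does not give its own proof of Theorem~\ref{imp} but cites Lazarsfeld, and in Remark~\ref{rmk213} explicitly identifies the key ingredients as ``log resolution of singularities and the theorems on local vanishing and generic smoothness,'' which is precisely the structure of your argument. Your treatment is in fact more detailed than what the paper records, including the careful justification of the base-change step via flatness over $T$ combined with $R^{>0}\mu_*=0$, and the observation (matching Example~9.5.37 in \cite{Larbook}) that the open set $U$ can be chosen uniformly in $c$ because the round-down is taken coefficientwise.
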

\begin{remark}
  \label{rmk213} We want to show condition (\ref{assss}) could be replaced by the following assumption $(\ast)$ in the above theorem:
  $$(\ast)\quad \mathfrak{a} |_{X_t}\neq 0 \text{ for
  \textbf{general} } t \in T. $$

  In fact, let $\mu : X' \rightarrow X$ be the log resolution of
  $\mathfrak{a}$ and $\mu^{- 1} \mathfrak{a} = \mathcal{O}_{X'} (- F)$, where
  the support of $F$ denoted by $E_i$ has simple normal crossings. The
  assumption (\ref{assss}) is to assure
  that the prime divisor $E_i |_{X'_t}$ makes sense for all $t\in T$, where $X'_t : = (p \circ \mu)^{- 1} (t)$. However,
  if we substitute
  (\ref{assss}) by $(\ast)$, then we obtain $E_i |_{X'_t}\neq X'_t$ for
  general $t \in T$.
  Consequently, equality (\ref{220}) still holds for general $t \in T$ by
  the same proof (whose key point is applying log resolution of singularities and the theorems on local vanishing and generic smoothness, cf. Corollary 10.7 in \cite{hart}) as in {\cite{Larbook}}.
\end{remark}

\subsection{Restriction techniques}

Because of the lack of continuity, it may be difficult to generalize algebraic results to plurisubharmonic functions at times. For
instance, the fundamental adjunction exact sequence appeared in birational
geometry can be extended for plurisubharmonic functions with an additional
H{\"o}lder continuity assumption (cf. Theorem C in {\cite{guenancia2}}). For
general plurisubharmonic function, this result may not be true for the lack of
H{\"o}lder continuity of $e^{\varphi}$ (cf. {\cite{guanli}}).

Before stating the restriction techniques for H{\"o}lder continuous
plurisubharmonic functions, we recall the following useful lemma.

\begin{lemma}
  \label{nb}{{(Lemma 2.15 in {\cite{guenancia2}})}} Let $\Omega \subset
  \mathbb{C}^n$ be a domain and $\varphi$ a plurisubharmonic function defined
  on $\Omega$. Suppose that $\varphi \leq - 1$, $\alpha > 0$ and $f \in
  \mathcal{O} (\Omega)$. If
  \[ \int_{\Omega} \frac{| f |^2 e^{- 2 (1 + \varepsilon) \varphi} 
     }{(- \varphi)^{\alpha}} d \lambda < \infty \]
  for some $\varepsilon > 0$, then there exists some ${\varepsilon'}  > 0$
  such that
  \[ \int_{\Omega} | f |^2 e^{- 2 ( {1 + \varepsilon'}  ) \varphi}
     d \lambda < \infty .   \]
  
\end{lemma}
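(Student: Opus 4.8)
The plan is to reduce the lemma to a single elementary scalar inequality. Write $\psi := -\varphi$, so that $\psi \geq 1$ on $\Omega$; the hypothesis becomes
\[
  \int_{\Omega} |f|^2\, e^{2(1+\varepsilon)\psi}\, \psi^{-\alpha}\, d\lambda < \infty ,
\]
and the goal is to produce some $\varepsilon' > 0$ with $\int_{\Omega} |f|^2\, e^{2(1+\varepsilon')\psi}\, d\lambda < \infty$. Since both integrands are nonnegative, it suffices to find a finite constant $C$ and some $\varepsilon' \in (0,\varepsilon)$ for which the pointwise comparison $e^{2(1+\varepsilon')\psi} \leq C\, e^{2(1+\varepsilon)\psi}\, \psi^{-\alpha}$ holds (almost) everywhere on $\Omega$, and then integrate this inequality against $|f|^2\, d\lambda$.

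Concretely I would take $\varepsilon' := \varepsilon/2$. After dividing both sides by $e^{2(1+\varepsilon')\psi}$, the desired pointwise bound is equivalent to $\psi^{\alpha} \leq C\, e^{\varepsilon \psi}$, i.e. to the one-variable estimate $t^{\alpha} \leq C\, e^{\varepsilon t}$ valid for all $t \geq 1$. This is where the strict gain $\varepsilon - \varepsilon' = \varepsilon/2 > 0$ enters: the continuous function $t \mapsto t^{\alpha} e^{-\varepsilon t}$ on $[1,\infty)$ tends to $0$ as $t \to \infty$, hence is bounded there by some $C = C(\alpha,\varepsilon) < \infty$. With this $C$ one gets $e^{2(1+\varepsilon')\psi} \leq C\, e^{2(1+\varepsilon)\psi}\, \psi^{-\alpha}$ at every point where $\varphi > -\infty$ — and $\{\varphi = -\infty\}$ is Lebesgue-null, so it may be ignored — whence
\[
  \int_{\Omega} |f|^2\, e^{2(1+\varepsilon')\psi}\, d\lambda \;\leq\; C \int_{\Omega} |f|^2\, e^{2(1+\varepsilon)\psi}\, \psi^{-\alpha}\, d\lambda \;<\; \infty ,
\]
as required; one may in fact take $\varepsilon' = \varepsilon/2$.

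There is no substantial obstacle here: the lemma is a routine weight manipulation used later as a black box, and its only content is the trade-off in which one spends a little of the exponent of $e^{-\varphi}$ in order to absorb the polynomial factor $(-\varphi)^{-\alpha}$. The one point deserving a moment's care is that the comparison constant $C$ has to be uniform over the whole, unbounded range $\psi \in [1,\infty)$ of $-\varphi$; this is precisely why one must keep $\varepsilon' < \varepsilon$ strictly rather than allow $\varepsilon' = \varepsilon$. If one prefers to avoid the global pointwise estimate, an equivalent route is to split $\Omega$ into $\{\psi \leq M\}$ — on which $e^{2(1+\varepsilon')\psi}$ is bounded and $|f|^2$ is integrable because the hypothesis integrand is bounded below by a positive constant there — and $\{\psi > M\}$, on which the same scalar inequality applies once $M$ is chosen large enough.
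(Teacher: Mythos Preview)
Your proof is correct and is essentially identical to the paper's own argument: both take $\varepsilon' = \varepsilon/2$ and reduce to the scalar bound $t^{\alpha} \leq C\, e^{\varepsilon t}$ for $t \geq 1$ (the paper phrases this as $C := \inf_{x \geq 1} e^{\varepsilon x}/x^{\alpha} > 0$ and uses $C^{-1}$ as the comparison constant). Your additional remarks on the null set $\{\varphi=-\infty\}$ and the alternative splitting are fine but not needed.
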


\begin{proof}
  Set $C := \inf \{ e^{\varepsilon x} / x^{\alpha} ; x \geq 1 \}$ and
  $\varepsilon' = \varepsilon / 2$, we know that:
  
$$
    \int_{\Omega} |  f | ^2 e^{- 2 ( {1 + \varepsilon'} ) \varphi} d \lambda \leq C^{- 1} \int_{\Omega} \frac{| f |^2 e^{- 2
    (1 + \varepsilon) \varphi}  }{(- \varphi)^{\alpha}} d
    \lambda < \infty . 
$$
  
  \ 
\end{proof}

Now we are ready to show the following key lemma, which was implicitly used in
the proof of Theorem 2.16 in {\cite{guenancia2}} to show the coherence of the
analytic adjoint ideal sheaves.

\begin{proposition}
  \label{rest}Let $\varphi$ be a plurisubharmonic function defined on
  $\Delta^n_z \times \Delta^m_w$. Suppose that $e^{\varphi}$ is H{\"o}lder
  continuous with respect to $w$, then for any
$$
    F \in {\mathcal{H}_{\Delta^n_z \times \Delta^m_w}}  ((1 + \varepsilon)
    \varphi + \log | w - w_0 |^{2 m} + \log (\log^2 | w - w_0 |)), 
$$
  $w_0 \in \Delta^m_w$ and $0 < r < 1$,
  \begin{equation}
    F |_{w = w_0} = F (\cdot, w_0) \in \mathcal{H}_{\Delta^n_z (r) \times
    \{ w_0 \}}( ( {1 + \varepsilon'}  ) \varphi)
    \label{claim} 
  \end{equation}
  for some ${\varepsilon'}  > 0$. Here $\Delta^n_z (r)$ denotes the polydisc
  with radius $(r, \ldots ., r) .$
\end{proposition}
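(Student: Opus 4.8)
The plan is to pass, via Fubini's theorem, to a weighted $L^2$-estimate for $F(\cdot,w_0)$ on the slice $\{w=w_0\}$ that carries a harmless negative power of $-\varphi$ in the integrand, and then to absorb that power using Lemma \ref{nb}. I begin with two reductions. Since the assertion is local around $\{w=w_0\}$ and only concerns $\Delta^n_z(r)$ with $r<1$, I fix a small $\rho_0\in(0,1-|w_0|)$ and, after subtracting a large constant from $\varphi$ (which merely multiplies the H\"older constant of $e^\varphi$ and the weight $e^{-2(1+\varepsilon)\varphi}$ by finite factors), assume $\varphi\le-1$ on $K:=\overline{\Delta^n_z(r)}\times\overline{B(w_0,\rho_0)}$; let $C_H,\alpha_H>0$ be the constants of (\ref{1.1}) attached to $K$. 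Concretely, the hypothesis says that $\int_{\Delta^n_z\times\Delta^m_w}\frac{|F|^2e^{-2(1+\varepsilon)\varphi}}{|w-w_0|^{2m}\log^2|w-w_0|}\,d\lambda_{m+n}<\infty$; in particular $F(z,\cdot)\in\mathcal{O}(\Delta^m_w)$ for every $z$, and $\varphi(\cdot,w_0)\in\mathrm{PSH}(\Delta^n_z)$ (or is $\equiv-\infty$, treated at the end).

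The heart of the matter is a pointwise estimate on the slices. For $z\in\Delta^n_z(r)$ with $\varphi(z,w_0)>-\infty$ (hence $\le-1$), I would set $\rho(z):=\min\{(C_H^{-1}e^{\varphi(z,w_0)})^{1/\alpha_H},\,\rho_0\}$. Then (\ref{1.1}) gives $e^{\varphi(z,w)}\le e^{\varphi(z,w_0)}+C_H\rho(z)^{\alpha_H}\le2e^{\varphi(z,w_0)}$, hence $e^{-2(1+\varepsilon)\varphi(z,w_0)}\le4^{1+\varepsilon}e^{-2(1+\varepsilon)\varphi(z,w)}$, for all $w\in B(w_0,\rho(z))$. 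Feeding this into the sub-mean value inequality $|F(z,w_0)|^2\le\frac{c_m}{\rho(z)^{2m}}\int_{B(w_0,\rho(z))}|F(z,w)|^2\,d\lambda_m(w)$ (valid since $|F(z,\cdot)|^2$ is plurisubharmonic and $B(w_0,\rho(z))\subset\Delta^m_w$), together with the elementary inequality $\rho^{-2m}\le C_1(\log^2\rho)\,|w-w_0|^{-2m}(\log^2|w-w_0|)^{-1}$ for $0<|w-w_0|\le\rho\le\rho_0$ (which follows since $\sup\{t^{2m}\log^2t/(\rho^{2m}\log^2\rho):0<t\le\rho\le\rho_0\}=\sup_{u\ge0}e^{-2mu}(1+u/|\log\rho_0|)^2<\infty$, by the substitution $t=e^{-s}$, $\rho=e^{-(s-u)}$), and with $\log^2\rho(z)\le C_2(-\varphi(z,w_0))^2$ (immediate from the definition of $\rho(z)$ and $\varphi(z,w_0)\le-1$), I obtain, for a.e. $z\in\Delta^n_z(r)$,
\[
\frac{|F(z,w_0)|^2e^{-2(1+\varepsilon)\varphi(z,w_0)}}{(-\varphi(z,w_0))^2}\ \le\ C\int_{\Delta^m_w}\frac{|F(z,w)|^2e^{-2(1+\varepsilon)\varphi(z,w)}}{|w-w_0|^{2m}\log^2|w-w_0|}\,d\lambda_m(w).
\]

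Integrating over $z\in\Delta^n_z(r)$ and applying Fubini on the right-hand side, the hypothesis yields $\int_{\Delta^n_z(r)}|F(z,w_0)|^2e^{-2(1+\varepsilon)\varphi(z,w_0)}(-\varphi(z,w_0))^{-2}\,d\lambda_n(z)<\infty$; since $\varphi(\cdot,w_0)\le-1$ on $\Delta^n_z(r)$, Lemma \ref{nb} (with $\alpha=2$ and $f=F(\cdot,w_0)$) then produces an $\varepsilon'>0$ with $\int_{\Delta^n_z(r)}|F(z,w_0)|^2e^{-2(1+\varepsilon')\varphi(z,w_0)}\,d\lambda_n<\infty$, which is exactly (\ref{claim}). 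One point needs separate treatment: the locus $\{\varphi(\cdot,w_0)=-\infty\}$, where the slice estimate degenerates. If $\varphi(\cdot,w_0)\not\equiv-\infty$ it is Lebesgue-null and harmless; if $\varphi(\cdot,w_0)\equiv-\infty$ the strong $w$-singularity of the hypothesis integral along $\{w=w_0\}$ forces $F(\cdot,w_0)\equiv0$ and (\ref{claim}) is trivial. I expect the only genuine work to be the bookkeeping around the $z$-dependent radius $\rho(z)$ — in essence the verification that the Ohsawa--Takegoshi denominator $|w-w_0|^{2m}\log^2|w-w_0|$ is precisely strong enough to be traded for the integrable factor $(-\varphi)^{-2}$, which is the shape required by Lemma \ref{nb}.
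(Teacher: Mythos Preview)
Your proof is correct and follows essentially the same route as the paper: choose a $z$-dependent radius $\rho(z)\sim e^{\varphi(z,w_0)/\alpha_H}$, use the H\"older condition to compare $e^{\varphi(z,w)}$ with $e^{\varphi(z,w_0)}$ on $B(w_0,\rho(z))$, pass to a slice estimate carrying a negative power of $(-\varphi(\cdot,w_0))$, integrate in $z$ via Fubini, and finish with Lemma~\ref{nb}. The one technical difference is that you extract $|F(z,w_0)|^2$ from the ball integral by the sub-mean value inequality for the psh function $|F(z,\cdot)|^2$, whereas the paper uses the pointwise lower bound $|F(z,w)|^2\ge|F(z,w_0)|^2-C_1|w-w_0|^2$; your choice is a little cleaner since it avoids the resulting error term, at the harmless cost of landing on the exponent $\alpha=2$ rather than $\alpha=1$ in Lemma~\ref{nb}.
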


\begin{proof}
  First find some $0 < r' < 1$ such that $w_0 \in \Delta^m_w (r')$. For any
  $(z, w_0), (z, w ) \in \Delta^n_z (r) \times \Delta^m_w (r')$, the
  mean-value theorem implies that
  \begin{equation}
    | F (z, w ) |^2 \geq | F (z, w_0) |^2 - C_1 | w -
    w_0 |^2 \label{gold} 
  \end{equation}
  for some uniform $C_1 > 0$ which only depends on $r$ and $r'$. Moreover, the
  H{\"o}lder continuity condition of $e^{\varphi}$ yields that
  \begin{equation}
    e^{2 \varphi (z, w )} \leq ( e^{\varphi (z, w_0)} + C_2 | w  -
    w_0 |^{\alpha}) \label{eqdd}^2 \leq C_3 (e^{2 \varphi (z, w_0)} + | w  -
    w_0 |^{2 \alpha})  
  \end{equation}
  for some uniform $C_2, C_3, \alpha > 0$. Setting $f (z) := F (z, w_0)$,
  we obtain the following inequality:
  \[ \frac{| F (z, w) |^2 e^{- 2 (1 + \varepsilon) \varphi (z, w)} 
     }{| w - w_0 |^{2 m} \log^2 | w - w_0 |  
      } \quad ( \text{by } ( \ref{eqdd} )
     ) \]
  \[ \geq C_3^{- (1 + \varepsilon)} \frac{| F (z, w) {| }^2 
     }{\log^2 | w - w_0 |  } \cdot \frac{1}{| w -
     w_0 |^{2 m} (e^{2 \varphi (z, w_0)} + | w  - w_0 |^{2 \alpha})^{(1 +
     \varepsilon)}    } \quad (
     \text{by } ( \ref{gold} ) ) \]
  \[ \geq \frac{C_3^{- (1 + \varepsilon)} | f (z) {| }^2 
     }{| w - w_0 |^{2 m} \log^2 | w - w_0 | (e^{2 \varphi (z, w_0)} + |
     w  - w_0 |^{2 \alpha})^{(1 + \varepsilon)}  
        } \]
  \begin{equation}
    - \frac{C_3^{- (1 + \varepsilon)} C_1^{- 1}}{| w - w_0 |^{2 m - 2} \log^2
    | w - w_0 | (e^{2 \varphi (z, w_0)} + | w  - w_0 |^{2 \alpha})^{(1 +
    \varepsilon)}     
    } . \label{1}
  \end{equation}
  Now letting $\rho (z) := \delta e^{\alpha^{- 1} \varphi (z, w_0)}$
  where $\delta > 0$ is chosen small enough to guarantee that $\{ | w - w_0 |
  < \rho (z) \}  $ is contained in $\Delta^n_z (r) \times
  \Delta^m_w (r')$, for the right term in the right hand side we have
  \[ \int_{| w - w_0 | < \rho (z)  } \frac{C_3^{- (1 +
     \varepsilon)} C_1^{- 1}}{| w - w_0 |^{2 m - 2} \log^2 | w - w_0 | (e^{2
     \varphi (z, w_0)} + | w  - w_0 |^{2 \alpha})^{(1 + \varepsilon)}
          } d
     \lambda_m \]
  \begin{equation}
    \leq e^{- 2 (1 + \varepsilon) \varphi (z, w_0)} C_4 \int_{| w - w_0 | <
    \rho (z)  } \frac{d \lambda_m}{| w - w_0 |^{2 m - 2}
     } \leq C_5 e^{2 [\alpha^{- 1} - (1 + \varepsilon)]
    \varphi (z, w_0)}, \label{2}
  \end{equation}
  where $C_3, C_4, C_5$>0 are some uniform constants. Therefore, the right
  hand side of (\ref{2}) has a uniform upper bound if $\alpha \leq (1 +
  \varepsilon)^{- 1}$. Note the first inequality uses an upper bound of
  $\log^{- 2} | w - w_0 |  $ as $| w - w_0 | \rightarrow 0
   $, and the second inequality follows from the spherical
  transformation of coordinates.
  
  As for the remaining term, we get
  \[ \int_{| w - w_0 | < \rho (z)  } \frac{C_3^{- (1 +
     \varepsilon)}}{| w - w_0 |^{2 m} \log^2 | w - w_0 | (e^{2 \varphi (z,
     w_0)} + | w  - w_0 |^{2 \alpha})^{(1 + \varepsilon)} 
         } d \lambda_{2 m}
  \]
  \[ \geq C_6 {e^{- 2 (1 + \varepsilon) \varphi (z, w_0)}}  \int_{| w - w_0 |
     < \rho (z)  } \frac{d \lambda_1}{| w - w_0 |^{2 m}
     \log^2 | w - w_0 |    } \]
  \[ \geq C_7 {e^{- 2 (1 + \varepsilon) \varphi (z, w_0)}}  \int_0^{\rho (z)}
     \frac{d r}{r (\log r)^2} \]
  \[ = - C_7 {e^{- 2 (1 + \varepsilon) \varphi (z, w_0)}}  \frac{1}{\log  \rho
     (z)} \]
  \begin{equation}
    = \frac{C_7 {e^{- 2 (1 + \varepsilon) \varphi (z, w_0)}} }{- \log \delta
    - \alpha^{- 1} \varphi (z, w_0)} . \label{3}
  \end{equation}
  Thanks to the estimate (\ref{1}), (\ref{2}), (\ref{3}) and the fact that
  \[ {\int_{\Delta^n_z \times \Delta^m_w}}  \frac{| F (z, w) |^2 e^{- 2 (1 +
     \varepsilon) \varphi (z, w)}  }{| w - w_0 |^{2 m}
     \log^2 | w - w_0 |    } d
     \lambda_{2 m + 2 n} < \infty, \]
  we infer that
$$
    \int_{\{ w_0 \} \times \Delta^n_z (r)} \frac{| f (z) |^2 {e^{- 2
    (1 + \varepsilon) \varphi (z, w_0)}}  }{- \varphi (z, w_0)}
    d \lambda_{2 n} < \infty
$$
  by Fubini's theorem. Now using Lemma \ref{nb}, our claim (\ref{claim})
  follows immediately.
\end{proof}

\subsection{Holomorphic stability of the integrals for 1-parameter
deformation}

The main purpose of this subsection is to prove the following result, which can also be viewed as a slight modification of Theorem 7 in
{\cite{phongsturm}}.

\begin{theorem}
  \label{useful}Let $D \subset \mathbb{C}^n$ be an open set containing $o$. Let $R (z, w)$ be
  a function of the form:
  \begin{equation}
    R (z, w) = R (z, w ; 2, \delta) : = \frac{| F(z,w) |^2  }{|
    G(z,w) |^{\delta}  } := \frac{\sum_{i = 1}^I | F_i (z,
    w) |^2  }{( \sum_{j = 1}^J | G_j (z, w)
    |^2)^{\delta / 2}  }, \label{222}
  \end{equation}
  where $F_i, G_j$ are both holomorphic functions on $D \times \Delta^1$ and
  $\delta$ is a fixed nonnegative rational number. Assume that
  
  \begin{equation}
    \int_D R (z, 0) d V (n) < \infty, \label{226}
  \end{equation}
  where $dV(n)$ is the Lebesgue measure of $\mathbb{C}^n$. Fix some $\beta \in \mathbb{Q}_+$ such that
  \begin{equation}
    \int_{{Q_1} } \frac{| F (z, 0) |^2  }{| G (z, 0)
    |^{\delta (1 + \beta)}  } < \infty, \label{227}
  \end{equation}
  where $Q_1$ is relative compact polydisc centered at $o$ in $D$. Now fix some $\alpha \in (1 -
  \beta, 1)$ and assume that
  \begin{equation}
    \int_{Q_2 \times \Delta (\varepsilon)} \frac{| F |^2 | G |^{- (1 + \beta)
    \delta}    }{| w |^{2 \alpha}
     } d V (n + 1) < \infty \label{219}
  \end{equation}
  for a smaller polydisc $Q_2 \subset Q_1 \subset D$ centered at $o$ and $\varepsilon > 0$.
  Then there is a small disc $\Delta^1 (\rho)$ for some $1 > \rho > 0$ and a relative open $D'
  \subset D$, such that:
$$
    w \mapsto \int_{D'} R (z, w) d V (n)
$$
  is both finite and continuous on $\Delta^1 (\rho)$.
\end{theorem}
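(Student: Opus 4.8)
The plan is to reduce the continuity statement to controlling the auxiliary fibre integral attached to the ``margin-$\beta$'' integrand $\widetilde R(z,w):=|F(z,w)|^2|G(z,w)|^{-(1+\beta)\delta}$, and then to transfer the result back to $R$ by a H\"older inequality that consumes the margin $\beta$. Since the conclusion allows us to shrink, I fix once and for all relatively compact polydiscs $o\in D'\subseteq Q_2$ and a disc $\Delta^1(\rho)\subseteq\Delta(\varepsilon)$, so that $\overline{D'\times\Delta^1(\rho)}$ is compact in $D\times\Delta^1$ and $|F|,|G|$ are bounded there, and I set $\Lambda(w):=\int_{D'}\widetilde R(z,w)\,dV(n)\in(0,+\infty]$. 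The point is that $\widetilde R=|F|^2e^{-\psi}$ where $\psi:=\tfrac{(1+\beta)\delta}{2}\log\big(\sum_j|G_j|^2\big)$ is plurisubharmonic and locally bounded above, so the fibrewise $L^2$-machinery applies to $\Lambda$.

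First I would show that $\Lambda$ is locally bounded above on $\Delta^1(\rho)$. Tonelli's theorem and hypothesis (\ref{219}) give $\int_{\Delta^1(\rho)}\Lambda(w)\,|w|^{-2\alpha}\,dV(1)(w)<\infty$, and since $|w|^{-2\alpha}$ is finite and positive away from $o$, this forces $\Lambda(w)<\infty$ for a.e.\ $w$; in particular $\Lambda\not\equiv+\infty$ (and $\Lambda(o)<\infty$ by (\ref{227})). Now regularise: pick smooth psh functions $\psi_\nu\downarrow\psi$ on $D'\times\Delta^1(\rho)$, so that $\Lambda_\nu(w):=\int_{D'}|F(z,w)|^2e^{-\psi_\nu(z,w)}\,dV(n)\uparrow\Lambda(w)$ by monotone convergence. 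For each $\nu$ the weight $e^{-\psi_\nu}$ is a bounded smooth metric on the pseudoconvex polydisc $D'$, so Berndtsson's theorem on the plurisubharmonic variation of fibrewise weighted $L^2$-norms (cf.\ Theorem \ref{2.4} and \cite{berndtsson2}), applied to each $F_i$ and summed (log-sum-exp preserves subharmonicity), shows $w\mapsto\log\Lambda_\nu(w)$ is subharmonic on $\Delta^1(\rho)$. Hence $\log\Lambda=\sup_\nu\log\Lambda_\nu$ is an increasing limit of subharmonic functions, not identically $+\infty$, so its upper semicontinuous regularisation $(\log\Lambda)^\ast$ is subharmonic; as $\log\Lambda\le(\log\Lambda)^\ast$ and subharmonic functions are locally bounded above, $\Lambda$ is locally bounded above on $\Delta^1(\rho)$.

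Finally I would pass to $R$ and conclude by uniform integrability. On $D'$, $R=|F|^2|G|^{-\delta}=\big(|F|^2|G|^{-(1+\beta)\delta}\big)^{1/(1+\beta)}\big(|F|^2\big)^{\beta/(1+\beta)}$, so H\"older's inequality with exponents $1+\beta$ and $(1+\beta)/\beta$ gives, for every measurable $E\subseteq D'$ and every $w\in\Delta^1(\rho)$,
\[
  \int_E R(z,w)\,dV(n)\ \le\ \Lambda(w)^{1/(1+\beta)}\Big(\int_E|F(z,w)|^2\,dV(n)\Big)^{\beta/(1+\beta)}\ \le\ C\,\Lambda(w)^{1/(1+\beta)}\,|E|^{\beta/(1+\beta)},
\]
with $C$ depending only on $\sup_{\overline{D'\times\Delta^1(\rho)}}|F|$. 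Since $\Lambda$ is locally bounded above, this bound is uniform in $w$ near any $w_0\in\Delta^1(\rho)$ and vanishes as $|E|\to0$; thus $\{R(\cdot,w)\}_w$ is uniformly integrable on $D'$ near $w_0$. As also $R(z,w)\to R(z,w_0)$ for a.e.\ $z\in D'$ (everywhere $G(z,w_0)\ne0$, whose complement is null), Vitali's convergence theorem yields $\int_{D'}R(z,w_0)\,dV(n)<\infty$ and continuity of $w\mapsto\int_{D'}R(z,w)\,dV(n)$ at $w_0$. Since $w_0\in\Delta^1(\rho)$ is arbitrary, this is the assertion of the theorem for this $D'$ and $\rho$.

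The main obstacle is the middle step, i.e.\ gaining control of $\Lambda$ over a full $w$-neighbourhood of $o$: plurisubharmonicity of $\psi$ by itself only makes $\log\Lambda$ an increasing limit of subharmonic functions, which a priori could be $\equiv+\infty$, and it is exactly hypothesis (\ref{219}) — with the room $\alpha\in(1-\beta,1)$ and the margin $\beta$ fixed via (\ref{227}) — that excludes this and unlocks the local boundedness of $\Lambda$; this integrable condition is what replaces the plurisubharmonicity of $-\log R$ in \cite{phongsturm}. An alternative, closer to \cite{phongsturm}, is to apply Hironaka's desingularisation (Theorem \ref{hiro}) to the ideals generated by the $F_i$, the $G_j$ and $w$, reducing $\int_{D'}R(z,w)\,dV(n)$ to an explicit monomial integral over a perturbed toric hypersurface $\{w=\text{monomial}\times\text{unit}\}$, where (\ref{226}), (\ref{227}), (\ref{219}) become numerical inequalities on the monomial exponents; I prefer the argument above since it avoids resolving singularities.
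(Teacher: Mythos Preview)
Your argument has a genuine gap in the middle step. You assert that Berndtsson's theorem (Theorem~\ref{2.4}) yields subharmonicity of $w\mapsto\log\Lambda_\nu(w)=\log\int_{D'}|F(z,w)|^2e^{-\psi_\nu(z,w)}\,dV(n)$, but that theorem concerns the fibrewise \emph{Bergman kernel}, i.e.\ the extremal quantity $\sup\{|f(z)|^2:\|f\|_{\psi_w}\le1\}$, not the $L^2$-norm of a \emph{fixed} holomorphic section. These go in opposite directions: the complex Pr\'ekopa--Berndtsson principle says that for psh $\psi$ the function $-\log\int e^{-\psi(z,w)}\,dV(z)$ is psh in $w$, so $\log\int e^{-\psi}$ is plurisuper\-harmonic, not subharmonic. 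Equivalently, positivity of the direct image means that $\log\|s\|^2$ for a holomorphic section $s$ tends to be \emph{super}harmonic, not subharmonic. A concrete instance: with $F\equiv1$ and $\psi_\nu(z,w)=|w|^2$ (smooth, psh) one gets $\log\Lambda_\nu(w)=\mathrm{const}-|w|^2$, which is strictly superharmonic. Nothing in your setup forces the regularisations $\psi_\nu$ to avoid this behaviour, and the presence of the factor $|F|^2$ does not repair it (you are taking $\log$ of an integral of $e^{\text{psh}-\text{psh}}$, which has no sign). Once this step fails, you lose the local upper bound on $\Lambda$, and the uniform-integrability/H\"older argument in your final step has nothing to feed on.

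This is not a cosmetic issue: obtaining a \emph{uniform} bound on the fibre integrals near $w=0$ is exactly the crux of the theorem, and the paper's proof attacks it by a completely different mechanism. Following Phong--Sturm, one first reduces to Weierstrass polynomials with non-degenerate exponents, then iteratively integrates out one $z$-variable at a time using their sharp one-dimensional algebraic estimates (Theorem~4 in \cite{phongsturm}), producing at each stage an equivalence $\int R\sim |w|^{\mu}\cdot\widetilde T$ valid for $w\neq0$. The new hypothesis~(\ref{219}) enters not through any subharmonicity but as precisely the input needed to apply the generalised Siu lemma (Lemma~\ref{zji}) at every stage of the induction, guaranteeing that the auxiliary denominators $\tilde L_k$ are $\not\equiv0$ and that the limiting integral at $w=0$ is recovered as an average over shrinking discs; this is what forces $\mu_n\ge0$ in the final one-variable asymptotic~(\ref{2237}) and hence the uniform bound. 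Your H\"older/Vitali reduction at the end is fine once one has $\sup_w\Lambda(w)<\infty$, but getting that bound requires either the Phong--Sturm iteration with Lemma~\ref{zji}, or a resolution-of-singularities computation of the type you allude to at the very end; it does not follow from Berndtsson's positivity.
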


\begin{remark}
  The original statement assumes plurisubharmonicity of $- \log R (z, c)$ to
  replace condition (\ref{219}). Their motivation is to apply Siu's lemma, or
  essentially, the $L^2$ extension theorem to get an estimate of the infimum
  of the integral (\ref{222}) parametrized by a puncture disk. Note
  (\ref{219}) is necessary since if we consider the following integration
$$
    I (w) : = \int_{\Delta} \frac{| z - w |  }{| z |^2
     } d V (1),
$$
  it is easy to see that $I (w)$ does not satisfies (\ref{219}) and
  simultaneously has not stable deformation around $w = 0$.
\end{remark}

\begin{remark}\label{spod}
  When $| F | \equiv \text{constant} > 0  $, such a
  holomorphic stability property still holds even if it is parametrized by
  multiple parameters. This result is implicitly proved in Theorem 4.2 of
  {\cite{guan-zhou-li}} by using an convergence theorem of weighted $L^2$ norm
  related to strong openness conjecture. 
\end{remark}

Before giving its proof, we highlight that there are two main ingredients for
the original proof of Theorem \ref{useful}. One is iterating sharp
estimates (Theorem~4 in {\cite{phongsturm}}) for one-dimensional integrals of
the form (\ref{222}), the other is the following Siu's lemma:

\begin{lemma}
  \label{siulemma}{{({\cite{phongsturm}}'s appendix)}} Let $D \subset
  \mathbb{C}^n$ be a pseudoconvex domain and $D' \subset D$ a relative compact
  subset. Let $\varphi (z, c)$ be a negative plurisubharmonic function on $D
  \times \Delta$ such that
$$
    \int_D e^{- \varphi (z, 0)} d V (n) < \infty,
$$
  then there exists a positive number $C > 0$ which only depends on $D$ and
  $D'$, such that
$$
    \inf_{0 < | c | < \rho  } \int_{D'} e^{- \varphi (z, c)}
    d V (n) \leq C \int_D e^{- \varphi (z, 0)} d V (n)
$$
  for $\rho > 0$ sufficiently small.
\end{lemma}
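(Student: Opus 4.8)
The plan is to derive the lemma from the Ohsawa--Takegoshi $L^2$ extension theorem (Lemma~\ref{ote}, in the one--parameter case), applied to the constant function $1$ on the central fibre $D\times\{0\}$.

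First I would put $M:=\int_D e^{-\varphi(z,0)}\,dV(n)$ and extend $1$. Since $\int_D|1|^2 e^{-\varphi(z,0)}\,dV(n)=M<\infty$ and $D\times\Delta$ is pseudoconvex, the extension theorem (valid with the polydisc $\Delta^n_z$ of Lemma~\ref{ote} replaced by the pseudoconvex $D$) yields $F\in\mathcal O(D\times\Delta)$ with $F(z,0)\equiv 1$ and
\[
  \int_{D\times\Delta}\frac{|F(z,c)|^2\,e^{-\varphi(z,c)}}{|c|^2\log^2|c|}\,dV(n+1)\ \le\ C_1\,M ,
\]
where $C_1$ is the numerical constant of Lemma~\ref{ote}. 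This is the only analytic input; the rest is bookkeeping.

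Because $F$ is continuous and equals $1$ on the compact set $\overline{D'}\subset D$ inside the fibre $c=0$, there is a radius $\rho\in(0,1)$ with $|F(z,c)|^2\ge 1/2$ for all $(z,c)\in\overline{D'}\times\overline{\Delta(\rho)}$; this is what ``$\rho$ sufficiently small'' will mean. Hence for $0<|c|<\rho$ we get $\int_{D'}e^{-\varphi(z,c)}\,dV(n)\le 2\,H(c)$, where $H(c):=\int_{D'}|F(z,c)|^2 e^{-\varphi(z,c)}\,dV(n)$. Restricting the displayed estimate to $D'\times\Delta(\rho)\subset D\times\Delta$ and using Tonelli (the integrand is nonnegative and measurable) gives $\int_{\Delta(\rho)}H(c)\,(|c|^2\log^2|c|)^{-1}\,dV(1)\le C_1 M$. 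In polar coordinates, $\mu_\rho:=\int_{\Delta(\rho)}(|c|^2\log^2|c|)^{-1}\,dV(1)=\frac{2\pi}{\log(1/\rho)}$ is \emph{finite} --- this convergence at $c=0$ is precisely the role of the $\log^2$ factor in Lemma~\ref{ote} --- and therefore
\[
  \inf_{0<|c|<\rho}\int_{D'}e^{-\varphi(z,c)}\,dV(n)\ \le\ 2\inf_{0<|c|<\rho}H(c)\ \le\ \frac{2}{\mu_\rho}\int_{\Delta(\rho)}\frac{H(c)}{|c|^2\log^2|c|}\,dV(1)\ \le\ \frac{2C_1}{\mu_\rho}\,M .
\]
So the asserted inequality holds with $C=2C_1/\mu_\rho=C_1\log(1/\rho)/\pi$, depending only on the universal Ohsawa--Takegoshi constant and on the chosen $\rho$.

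The one delicate point --- and the place where relative compactness of $D'$ enters --- is the passage from the weighted integral of $|F|^2 e^{-\varphi}$ to the unweighted integral of $e^{-\varphi}$: this needs the \emph{pointwise} lower bound $|F(z,c)|^2\ge 1/2$ on $D'\times\Delta(\rho)$, which is available only after shrinking $\rho$ to the size dictated by the continuity of $F$. A tempting substitute --- the sub-mean-value inequality for the subharmonic function $c\mapsto\log|F(z,c)|^2$, whose value at $c=0$ is $0$, so that by Jensen the circular averages of $|F(z,c)|^2$ are $\ge 1$ --- controls only averages of $|F|^2$, not of $|F|^2 e^{-\varphi}$, and so does not dispense with the pointwise bound; consequently $\rho$, and with it the final constant, genuinely depends on $F$, hence on $\varphi$ (with this understanding the statement is as quoted). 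Apart from this, the argument is a routine combination of $L^2$ extension and Fubini.
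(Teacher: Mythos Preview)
The paper does not supply its own proof of this lemma; it is quoted from the appendix of \cite{phongsturm} and used only as background for the generalized version, Lemma~\ref{zji}, which is what the paper actually applies. Your approach---extend the constant $1$ from the central fibre by Ohsawa--Takegoshi, then average in $c$ against the weight $(|c|^{2}\log^{2}|c|)^{-1}$ via Fubini and pass from $|F|^{2}e^{-\varphi}$ to $e^{-\varphi}$ using a pointwise lower bound on $|F|$ near $c=0$---is exactly the argument given in Phong--Sturm's appendix, so there is no methodological difference to report.

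Your closing caveat deserves a comment. You are right that, as written, your constant $C=C_{1}\log(1/\rho)/\pi$ depends on $\rho$, and $\rho$ is fixed by the continuity of the particular extension $F$ and hence by $\varphi$; so the clause ``$C$ depends only on $D$ and $D'$'' is not literally established by the argument you give. This is not a defect of your proof so much as a slight imprecision in how the lemma is stated here: for the intended use (in \cite{phongsturm} and implicitly in this paper), one only needs that for \emph{some} small $\rho$ the infimum is finite and controlled by the central-fibre integral, which your argument delivers. If one insists on a constant independent of $\varphi$, one can exploit the hypothesis $\varphi\le 0$ to get an unweighted $L^{2}$ bound on $F$ over $D\times\Delta$, then use Cauchy estimates on $D'\subset\subset D$ to bound $|F(z,c)-1|$ in terms of $|c|$ and $M=\int_{D}e^{-\varphi(\cdot,0)}$; this makes $\rho$ explicit in $M$, but the resulting bound is of order $M\log M$ rather than $M$, so a strictly linear estimate with $C=C(D,D')$ does appear to require more than the bare Ohsawa--Takegoshi input. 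In short: your proof is correct and standard, and the dependency issue you flag is a feature of the quoted formulation, not a gap in your reasoning.
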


It is also worthwhile to mention that Zhou and Zhu gave a
generalized version of Lemma \ref{siulemma} (cf. \cite{zhouzhu2}) to establish an optimal $L^2$
extension theorem for singular weights on weakly pseudoconvex K\"{a}hler manifolds (cf. {\cite{zhouzhujdg}}). Their
proof of generalized version of Lemma \ref{siulemma} strongly relies on a solution to the strong openness conjecture of
multiplier ideal sheaves. Later on, they further developed Siu's
lemma for plurisubharmonic functions with non-trivial multiplier ideal
sheaves.

\begin{lemma}
  \label{zji}{{(Theorem 1.2 in {\cite{zhouzhu}}, $p = 2$ case)}} Let
  $\varphi (z, w)$ be a plurisubharmonic function defined on a product domain
  of unit balls $B^n_z \times B^m_w$, and $f (z)$ a holomorphic function on
  $B^n_w$ satisfying that:
$$
    \int_{B^n} | f (z) |^2 e^{- \varphi (z, 0)} d V (n) < \infty . 
$$
  Assume that $0 < r_1 < r_2 < r_3 < 1$, $\beta$ being a positive number such
  that
$$
    \int_{B^n (r_3)} | f (w) |^2 e^{- (1 + \beta) \varphi (z, 0)} d V (n)
   =: I_{\beta} < \infty  
$$
  {{(the existence of $\beta$ is guaranteed by
  {\cite{guan-zhou15soc}})}}, and $\alpha \in ( 1 - \frac{\beta}{m}, 1
  )$ be a non-negative number. Then there exists a holomorphic function
  $F (z, w)$ such that $F (z, 0) = f (z)$ on $B^n_z (r_3)$,
  \begin{equation}
    \int_{B^n_z (r_3) \times B^m_w} \frac{| F (z, w) |^2 
    }{| w |^{2 m \alpha}  } e^{- (1 + \beta)
    \varphi (z, w)} d V (m + n) < \infty \label{223},
  \end{equation}
  and
  \[ \lim_{\varepsilon \rightarrow 0} \frac{1}{\lambda_m (B ^m (\varepsilon))}
     \int_{B^n  (r_1) \times B^m_w (\varepsilon) } |F (z, w) |^2 e^{-
     \varphi (z, w)} d V (m + n) \]
  \begin{equation}
    = \int_{B^n (r_1)} |f (z)|^2 e^{- \varphi (z, 0)} d V (n) . \label{224}
  \end{equation}
  Moreover, any holomorphic extension $F$ of $f$ satisfying
  {{(\ref{223})}} must have {{(\ref{224}).}}
\end{lemma}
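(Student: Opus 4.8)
The plan is to produce $F$ from the Ohsawa--Takegoshi theorem and then establish the asymptotic identity (\ref{224}) for \emph{every} extension satisfying (\ref{223}), the hard half of which is an upper estimate that ultimately forces a reduction to one base variable in the spirit of \cite{phongsturm}.

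\emph{Construction of $F$ and reduction.} Apply the $L^2$ extension theorem (Lemma \ref{ote}) on the bounded pseudoconvex domain $B^n_z(r_3)\times B^m_w$ with the plurisubharmonic weight $(1+\beta)\varphi$, extending $f$ across $B^n_z(r_3)\times\{0\}$; since $I_\beta<\infty$ this yields a holomorphic $F$ with $F(\cdot,0)=f$ on $B^n(r_3)$ and $\int \tfrac{|F|^2 e^{-(1+\beta)\varphi}}{|w|^{2m}\log^2|w|}\lesssim I_\beta$. Because $\alpha<1$, the elementary bound $|w|^{-2m\alpha}=\big(|w|^{2m(1-\alpha)}\log^2|w|\big)\,|w|^{-2m}\log^{-2}|w|\le C_{m,\alpha}\,|w|^{-2m}\log^{-2}|w|$ (valid on all of $B^m_w$ after enlarging the constant) upgrades this to (\ref{223}); note that only $\alpha<1$, not the sharp $\alpha>1-\beta/m$, is needed here. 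Granting this, (\ref{224}) for the constructed $F$ follows from the ``moreover'' clause, so I now take $F$ to be an arbitrary holomorphic extension satisfying (\ref{223}) and set $\Psi_\varepsilon:=\lambda_m(B^m(\varepsilon))^{-1}\int_{B^n(r_1)\times B^m(\varepsilon)}|F|^2 e^{-\varphi}$, which by Fubini equals $\int_{B^n(r_1)}I_\varepsilon(z)\,dV(z)$ with $I_\varepsilon(z):=\lambda_m(B^m(\varepsilon))^{-1}\int_{B^m(\varepsilon)}|F(z,w)|^2 e^{-\varphi(z,w)}dV(w)$; the goal is $\Psi_\varepsilon\to\Psi_0:=\int_{B^n(r_1)}|f|^2 e^{-\varphi(\cdot,0)}$.

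\emph{The lower bound} is the easy half: for fixed $z$, continuity of $w\mapsto F(z,w)$ and upper semicontinuity of $w\mapsto\varphi(z,w)$ give $|F(z,w)|^2 e^{-\varphi(z,w)}\ge (|f(z)|^2-\delta)e^{-\varphi(z,0)-\delta}$ for all small $|w|$ (any $\delta>0$), hence $\liminf_{\varepsilon\to0}I_\varepsilon(z)\ge |f(z)|^2 e^{-\varphi(z,0)}$ for a.e.\ $z$, and Fatou's lemma yields $\liminf_{\varepsilon\to0}\Psi_\varepsilon\ge\Psi_0$. This uses neither (\ref{223}) nor $\beta$.

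\emph{The upper bound is the main obstacle.} We must prove $\limsup_{\varepsilon\to0}\Psi_\varepsilon\le\Psi_0$, and here both the finiteness (\ref{223}) and the strict inequality $\alpha>1-\beta/m$ are genuinely needed: $e^{-\varphi}$ is only lower semicontinuous (being the exponential of a negative of a plurisubharmonic function), so its mass can concentrate transverse to $\{w=0\}$, and — as a quick check shows — every naive Hölder interpolation between the weights $e^{-(1+\beta)\varphi}$ and $e^{0}$, which is the only integrability level at our disposal, produces a factor $\varepsilon^{2m(\alpha-1)/(1+\beta)}\to\infty$ that defeats the normalisation $\lambda_m(B^m(\varepsilon))^{-1}\asymp\varepsilon^{-2m}$. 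The route is therefore the one indicated in the introduction: first reduce to $m=1$ by iterating in the base variables, writing $B^m=B^1\times B^{m-1}$ and invoking the one-variable case along slices while checking that the hypotheses on $\beta$ and $\alpha$ descend to the slices; then, for $m=1$, combine Siu's lemma (Lemma \ref{siulemma}, or its singular-weight refinements in \cite{zhouzhu2,zhouzhujdg}) with the sharp one-dimensional estimates for integrals of the form (\ref{222}) (Theorem~4 of \cite{phongsturm}), the point being that $\alpha>1-\beta/m$ is precisely the threshold under which those sharp one-dimensional estimates apply and force the extension $F$ to ``absorb'' the part of $e^{-\varphi}$ that would otherwise concentrate near $\{w=0\}$; simultaneously, splitting $e^{-\varphi}$ at a large threshold $T$ and using the quasicontinuity of plurisubharmonic functions (so that the sublevel sets $\{\varphi<-\log T\}$ occupy relative measure tending to $0$ inside $B^m(\varepsilon)$) shows that the bounded part $|F|^2 e^{-\max(\varphi,-\log T)}$ contributes at most $\int_{B^n(r_1)}|f|^2 e^{-\max(\varphi(\cdot,0),-\log T)}=\Psi_0+o_T(1)$, while the complementary part is sent to $0$ by first $\varepsilon\to0$ and then $T\to\infty$. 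Putting the two bounds together gives $\Psi_\varepsilon\to\Psi_0$, i.e.\ (\ref{224}), and the constructed $F$ then satisfies both (\ref{223}) and (\ref{224}). An alternative, less self-contained argument follows \cite{zhouzhu2,zhouzhujdg}: realize (\ref{224}) with equality for the minimal extension $F_0$ from the sharpness of the optimal $L^2$-extension constant, and use (\ref{223}) to force $F-F_0$ to vanish along $\{w=0\}$ to high enough order (again governed by $\alpha>1-\beta/m$) that it makes no contribution in the limit. In all of this, the genuine difficulty is the control of the transverse concentration of $e^{-\varphi}$; the remaining steps are extension theory and Fubini.
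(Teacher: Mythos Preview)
The paper does not prove this lemma at all: it is quoted verbatim as Theorem 1.2 of \cite{zhouzhu} (the $p=2$ case) and used as a black box in the proof of Theorem \ref{useful}. So there is no ``paper's own proof'' to compare your proposal against.

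That said, your main line of argument has a genuine gap in the upper bound. You propose to reduce to $m=1$ and then invoke Theorem~4 of \cite{phongsturm}, but that theorem gives sharp one-dimensional estimates only for integrands of the very special algebraic shape (\ref{222}), namely ratios of sums of squared moduli of holomorphic functions. Here $\varphi$ is an \emph{arbitrary} plurisubharmonic function, so the Phong--Sturm machinery simply does not apply; you cannot ``absorb'' the concentration of $e^{-\varphi}$ near $\{w=0\}$ this way without first approximating $\varphi$ by functions with analytic singularities, and that approximation would have to be uniform in a sense you have not controlled. The quasicontinuity splitting you sketch is likewise incomplete: quasicontinuity gives smallness in capacity of $\{\varphi<-\log T\}$, not smallness of its relative Lebesgue measure inside shrinking balls $B^m(\varepsilon)$, and without the latter the ``complementary part'' is not sent to zero.

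Your closing ``alternative'' is in fact much closer to how the result is actually established in \cite{zhouzhu}: one uses the strong openness property \cite{guan-zhou15soc} together with the optimal $L^2$ extension theorem to show that the minimal extension already realises equality in (\ref{224}), and then the integrability (\ref{223}) with $\alpha>1-\beta/m$ forces any other admissible extension to differ from the minimal one by something vanishing to sufficiently high order along $\{w=0\}$ that it contributes nothing in the limit. If you want a self-contained argument, that is the route to flesh out; the Phong--Sturm reduction should be discarded.
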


Now we are prepared to giving our proof of Theorem \ref{useful}, whose idea is
mainly based on the proof of Theorem 7 in {\cite{phongsturm}}. The proof will
be almost the same as {\cite{phongsturm}} except for showing the replacement
of assumption (\ref{219}), which has been essentially applied in step 2-step 4, does not affect our final destination. Note also the 
assumption (\ref{219}) behaves as stable as the original one in
{\cite{phongsturm}} (see step 1).

Let us first fix the notations
$$
  z' = (z_1, \ldots, z_{n - 1}), z'' = (z_1, \ldots, z_{n - 2}), \ldots
$$
for simplicity. 

Following \cite{phongsturm}, an {{absolute rational power}} in $n$ variables $Z =
(Z_{1,} \ldots, Z_n)$ is an expression of the following forms:
$$
  R = R (Z ; \varepsilon, \delta) = \frac{(\sum_{i = 1}^I |F_i (Z)
  |^2)^{\varepsilon / 2}}{(\sum_{j = 1}^J |G_j (Z) |^2)^{\delta / 2}} = \frac{K
  (Z)}{L (Z)},
$$
where $F_i, G_j$ are polynomials in $Z$ with coefficients in a ring $A$, $\varepsilon$ and $\delta$ are nonnegative rational numbers, and at least one of
the $F_i$'s is nonzero. All
absolute rational powers with coefficients in $A$ will be denoted by the set
$A |Z|$. When $F_i, G_j$ are Weierstrass polynomials in the $z_n$ directions, a
pair of exponents $(\varepsilon, \delta) \subset \mathbb{Q }^2_{> 0}$ of $R
(z, w ; \varepsilon, \delta)$ is called \emph{non-degenerate} if
\begin{equation}
  \nu \varepsilon + 2 - \frac{l}{[\delta] + 1} \delta \neq 0 \label{239}
\end{equation}
for any integers $0 \leq \nu \leq \max_{i\in I} M_i$ and $ 0 \leq l \leq \max_{j\in J} N_j$, where
$$
  |F|^2 = \sum_{i = 1}^I |F_i |^2 = \sum_{i = 1}^I | \sum_{\mu = 0}^{M_i}
  a_{i, \mu} (z' ,w) z_n^{\mu} |^2
$$
and
$$
  |G|^2 = \sum_{j = 1}^J |G_j |^2 = \sum_{j = 1}^J | \sum_{\nu = 0}^{N_j}
  b_{j, \nu} (z' ,w) z_n^{\nu} |^2.
$$
\begin{proof}
  [Proof of Theorem \ref{useful}] Thanks to Lemma 5.4 in \cite{phongsturm}, we just need to show
  \begin{equation}\label{goal}
    \sup_{w \in \Delta (\rho)} \int_{P } R (z, w ; 2, \delta + \eta) d V (n) <
    \infty
  \end{equation}
  for some $\rho, \eta > 0$, where $P$ is a small polydisc in $\mathbb{C}^n$. To make things clear, we divide the proof into
  four steps:
  \begin{step}
   Reduction of each $F_i,G_j$ into
  Weierstrass polynomials with non-degenerate exponents $(2,\delta)$
  \end{step}
  
  Thanks to Lemma 5.3 in {\cite{phongsturm}}, we may assume that each $F_i (z,
  w), G_j (z, w)$ are Weierstrass polynomials in $z_n$ in a smaller polydisc
  $B^{n - 1}_{r_{n - 1}} \times B^1_{r_n} \times B^1_{\rho} \subset \subset
  Q_2$ for some $r_{n - 1}, r_n, \rho > 0$ after a linear change of $z$
  variables.
  
  Let us choose $\eta > 0$ such that (\ref{226}) still holds for $R (z, w ;
  2, \delta + \eta)$ (after possibly shrinking $D$) and the pair $(2, \delta + \eta)$ is non-degenerate. This
  is always possible by an openness argument and the fact that $(2, \delta +
  \eta)$ is degenerate (i.e., (\ref{239}) does not hold) for a finite choice
  of numbers $\eta$. We can also arrange that (\ref{219}) still
  holds for this choice of $\eta$ (after possibly shrinking $Q_2$ and $\varepsilon$).
  
  In summary, we may now assume each $F_i (z, w), G_j (z, w)$ are Weierstrass
  polynomials and the pair $(2, \delta)$ is non-degenerate.
  \begin{step}
 Lower the integral domain into a
  $n - 1$ dimensional polydisc
  \end{step}
  Thanks to our non-degenerate assumption and Theorem 4 in
  {\cite{phongsturm}}, we obtain that ($d V_i$ will stand for the 1-dimensional volume form along the $z_i$-direction)
  \begin{equation}\label{sv}
    \int_{B_{r_n}^1} R (z',z_n, w) d V_n \sim T_{\lambda (a (z', w), b (z', w))}
    ((a (z', w), b (z', w))) := T_{\lambda} (z', w),
  \end{equation}
  where $\lambda(a (z', w), b (z', w)):=\lambda (z', w) \in \{0, \ldots, \mathcal{N}\}$ denotes the
  index (which is determined by $a,b$) for a finite chain of algebraic varieties
$$
    \{(a, b) = (a_{i, \mu}, b_{j, \nu}) |a_{i, \mu}, b_{j, \nu} \in
    \mathbb{C}, a_{i, \mu} \neq 0 \text{ for some } i, \mu\} = : U_0
    \supset \ldots \supset U_{\mathcal{N}} = \varnothing
$$
  and $T_{\lambda} \in \mathbb{Z} | (a , b ) |$. The implied constant in (\ref{sv}) is independent of $(z',w)$. Let $\lambda' := \inf_{(z',
  w) \in B^{n - 1}_{r_{n - 1}} \times B^1_{\rho}} \lambda (z', w)$, and write
  $T_{\lambda'}$ as
$$
    T_{\lambda'}(z',w) = \frac{K_{\lambda'} (z', w)}{L_{\lambda'} (z', w)}
$$
  for some absolute polynomials $K_{\lambda'}, L_{\lambda'}$.
  
  We claim that{ {$L_{\lambda'}  \not\equiv 0$}
  {(for the sake of (\ref{248}-\ref{250}))}}. Otherwise, the
  integral (\ref{226}) would be infinite for all $w$ belonging to a small
  neighborhood of $w = 0$. Then Lemma \ref{zji} implies that
$$
    \lim_{\varepsilon \rightarrow 0} \frac{1}{\pi \varepsilon^2} \int_{P 
    \times \Delta (\varepsilon)} R (z, w) d V (n+1)  = \int_{P } R (z, 0) d
    V (n) < \infty
$$
  for any $P \subset \subset Q_2$ {{by our
  assumption (\ref{219})}}. This is clearly impossible since the left hand
  side is infinite if $L_{\lambda'} \equiv 0$.
  
  Applying Theorem 4 in {\cite{phongsturm}} again, it implies that, for each nonzero
  $w \in \Delta (\rho)$ and all $z'$ outside the subvariety $Z'_w$ defined by
  $Z_w' = \{z' : K_{\lambda'} (z', w) = 0\},$  
  \begin{equation}
    \int_{B_{r_n}^1} R (z, w) d V_n \sim T_{\lambda'} (z', w) = |w|^{\mu_1}
    \cdot \tilde{T}_{\lambda'} (z', w) \label{247}
  \end{equation}
  holds. Here $\mu_1 \in \mathbb{Q}$, $\tilde{T}_{\lambda'} =
  (\tilde{K}_1)^{\varepsilon_1} / (\tilde{L_1})^{\delta_1}$, $\tilde{K}_1$
  and $\tilde{L}_1$ are sums of absolute values of holomorphic functions, and
  $\tilde{K}_1 (z', 0)  \not\equiv 0$, $\tilde{L}_1 (z', 0)  \not\equiv 0$. Note
  $\tilde{K}_1 (z', 0)  \not\equiv 0$ implies that $Z_w'$ is a proper
  subvariety{ (in particular of measure zero, for the sake of
  the following multiple integrals, e.g. \ref{forj})} for all nonzero $w \in B^1_{\rho}$ after shrinking $\rho$ a little bit.
  \begin{step}
 An iteration process of the first
  two steps
  \end{step}
  After making a linear change in the $z'$ variables and using Lemma 5.3 in
  {\cite{phongsturm}} again, we may assume that $\tilde{K}_1 (z', w),
  \tilde{L}_1 (z', w)$ are sums of absolute values of monic polynomials in
  $z_{n - 1}$ whose coefficients are analytic functions of $(z'', w)$ for $z'=(z'',z_n,w)
  \in B_{r_{n - 2}}^{n - 2}\times B^1_{r_{n-1}} \times B^1_{\rho}$ (possibly after shrinking $\rho$ and $r_{n-1}$) and some $r_{n - 2} > 0$. Now choose $\sigma_1 >
  0$ small enough such that:
  \begin{enumerate}
    \item \label{aa} $(\tilde{K}_1)^{\varepsilon_1} / (\tilde{L'_1})^{\delta_1 +
    \sigma_1}$ is non-degenerate with respect to the $z_{n - 1}$ variable;
    
    \item \label{bb}
    \begin{equation}
      \int_{B^{n - 2}_{r_{n - 2}} \times B^1_{r_{n - 1}} \times B^1_{r_n}}
      \frac{|F (z, 0) |^2 | G (z, 0) |^{- \delta} 
      }{\tilde{L}_1 (z', 0)^{\sigma_1}} d V (n) < \infty \label{248}
    \end{equation}
    and
    \begin{equation}
      \int_{B^{n - 2}_{r_{n - 2}} \times B^1_{r_{n - 1}} \times B^1_{r_n}}
      \frac{|F (z, 0) |^2 | G (z, 0) |^{- (1 + \beta) \delta} 
      }{\tilde{L}_1 (z', 0)^{(1 + \beta) \sigma_1}} d V (n) < \infty
      ; \label{249}
    \end{equation}
    {(
     we might shrink $r_{n-2},r_{n-1},r_n$ again for the use of strong openness property)}
    
    \item \label{cc}
    \begin{equation}
      \int_{S_1 \times \Delta (\varepsilon_1)} \frac{| F (z, w) |^2 | G (z, w)
      |^{- (1 + \beta) \delta}    }{|
      w |^{2 \alpha} \tilde{L}_1 (z', w)^{(1 + \beta) \sigma_1} 
      } d V (n + 1) < \infty \label{250}
    \end{equation}
    holds with $B^{n -
    2}_{r_{n - 2}} \times B^1_{r_{n - 1}} \times B^1_{r_n} \subset \subset S_1
    \subset \subset Q_2, \varepsilon_1 < \varepsilon$.
  \end{enumerate}
  This can be done as the first step as before. Applying
  (\ref{248})-(\ref{250}) and{ Lemma \ref{zji}} again, we
  obtain
  \begin{equation}
    \lim_{\varepsilon \rightarrow 0} \frac{1}{\pi \varepsilon^2} \int_{{P_2} 
    \times \Delta (\varepsilon)} \frac{R (z, w)}{\tilde{L}_1 (z',
    w)^{\sigma_1}} d V (n+1)  = \int_{{P_2} } \frac{R (z, 0)}{\tilde{L}_1
    (z', 0)^{\sigma_1}} d V (n) < \infty \label{240} ,
  \end{equation}
  where $P_2=B^{n -2}_{r_{n - 2}} \times B^1_{r_{n - 1}} \times B^1_{r_n}$.
  
  Now we get for all nonzero $w \in B^1_{\rho}$,
  \begin{equation}\label{forj}
    \int_{B^1_{r_{n - 1}}} \int_{B^1_{r_n}} \frac{R (z, w)}{\tilde{L}_1 (z',
    w)^{\sigma_1}} d V_n d V_{n - 1} \sim |w|^{\mu_1} \int_{B^1_{r_{n - 1}}}
    \frac{(\tilde{K_1} (z', w))^{\varepsilon_1}}{(\tilde{L}_1 (z',
    w))^{\delta_1 + \sigma_1}} d V_{n - 1}
  \end{equation}
  holds thanks to (\ref{247}). We may apply Theorem 4 in {\cite{phongsturm}} again
  by condition (\ref{aa}) and find that
$$
   |w|^{\mu_1} \int_{B^1_{r_{n - 1}}} \frac{(\tilde{K}_1 (z',
    w))^{\varepsilon_1}}{(\tilde{L}_1 (z', w))^{\delta_1 + \sigma_1}} d V_{n -
    1} \sim |w|^{\mu_1} T_{\lambda''} (z'', w) := |w|^{\mu_2} \tilde{T}_{\lambda''}
    (z'', w)
$$
  for nonzero $w\in B^1_{\rho}$ (possibly after shrinking) and all $z'' \in B^{n - 2}_{r_{n - 2}}$ outside
  a proper subvariety $Z''_w$ which depends on $w$. 
  
  If we write $\tilde{T}_{\lambda''}
  = (\tilde{K}_2)^{\varepsilon_2} / (\tilde{L}_2)^{\delta_2}$ and assume that $\tilde{K}_2 (z'', w),
  \tilde{L}_2 (z'', w)$ are sums of Weierstrass polynomials in $z_{n-3}$ for $z''=(z''',z_{n-2},w)\in B^{n-3}_{r_{n-3}}\times B^1_{r_{n-2}} \times B^1(\rho)$ and some $r_{n-3}>0$ as before, then
  we can choose $\sigma_2 > 0$ such that
  \begin{enumerate}
    \item $(\tilde{K}_2)^{\varepsilon_2} / (\tilde{L}_2)^{\delta_2 +
    \sigma_2}$ is non-degenerate with respect to the $z_{n - 2}$ variable;
    
    \item
$$
     \int_{B^{n - 3}_{r_{n - 3}} \times B^1_{r_{n - 2}} \times B^1_{r_{n -
       1}} \times B^1_{r_n}} \frac{|F (z, 0) |^2 | G (z, 0) |^{- \delta}
        }{\tilde{L}_1 (z', 0)^{\sigma_1} \tilde{L}_2 (z'',
       0)^{\sigma_2}} d V (n) < \infty 
$$
    and
$$
      \int_{B^{n - 3}_{r_{n - 3}} \times B^1_{r_{n - 2}} \times B^1_{r_{n -
      1}} \times B^1_{r_n}} \frac{|F (z, 0) |^2 | G (z, 0) |^{- (1 + \beta)
      \delta}  }{\tilde{L}_1 (z', 0)^{(1 + \beta)
      \sigma_1} \tilde{L}_2 (z'', 0)^{(1 + \beta) \sigma_2}} d V (n) < \infty
      ;
$$
    \item
$$
      \int_{S_2 \times \Delta (\varepsilon_2)} \frac{| F (z, w) |^2 | G (z, w)
      |^{- (1 + \beta) \delta}    }{|
      w |^{2 \alpha} \tilde{L}_1 (z', w)^{(1 + \beta) \sigma_1} \tilde{L}_2
      (z'', w)^{(1 + \beta) \sigma_2}  } d V (n + 1) <
      \infty
$$
    holds 
    with $B^{n -
    3}_{r_{n - 3}} \times B^1_{r_{n - 2}} \times B^1_{r_{n - 1}} \times
    B^1_{r_n} \subset \subset S_2 \subset\subset S_1, \varepsilon_2 < \varepsilon_1<\varepsilon$.
  \end{enumerate}
  ...
 \begin{step}
 Application of generalized Siu's
  lemma (i.e. Lemma \ref{zji})
  \end{step}
  
  In summary, by the iteration and induction process as above, we finally obtain that for each nonzero $w\in\Delta(\rho)$,
  \begin{equation}
    \int_{P_n} \frac{R (z, w)}{\Lambda (z, w)} d V (n) \sim T (w) \label{2237}
    \sim| w |^{\mu_n},
  \end{equation}
  where the left hand side is finite at $w = 0$, $\Lambda (z, w) = \prod_{k =
  1}^n \tilde{L}_k  (z, w)^{\sigma_k}$ , $P_n = \prod_{k = 1}^n B^1_{r_k}$.
  Here for each $1 \leq j \leq n$, we require that each $\tilde{L}_j$ and
  $\sigma_j > 0$ not only satisfy that $(\tilde{K}_j)^{\varepsilon_j} /
  (\tilde{L}_j)^{\delta_j + \sigma_j}$ are non-degenerate with respect to
  $z_{n - j}$, but also satisfy the following integrable conditions:
  \begin{equation}
    \int_{P_j} \frac{| F |^2  }{| G |^{\delta}
    \cdot \prod_{k = 1}^j L_k (z_1, \ldots, z_{n - k}, 0)^{\sigma_k} } dV(n)< \infty, \label{257}
  \end{equation}
  \begin{equation}
    \int_{P_j} \frac{| F |^2  }{| G |^{(1 +
    \beta) \delta} \cdot \prod_{k = 1}^j L_k (z_1, \ldots, z_{n - k}, 0)^{(1 +
    \beta) \sigma_k} }dV(n) < \infty, \label{j}
  \end{equation}
  
  \begin{equation}
    \int_{S_j \times \Delta (\varepsilon_j)} \frac{| F |^2 | G |^{- (1 +
    \beta) \delta}    }{| w
    |^{2 \alpha} \prod_{k = 1}^j L_k^{(1 + \beta) \sigma_k} } d V
    (n + 1) < \infty \label{238}
  \end{equation}
  for $\varepsilon > \varepsilon_1 > \cdots > \varepsilon_j > 0$, and $S_j
  \subset \ldots . \subset S_1 \subset Q_2$, and $P_j=B^{n-j}_{r_{n-j}}\times\prod_{k=1}^j B^1_{r_{n-j+1}}\subset\subset S_j$. As a result of (\ref{257}),
  (\ref{j}) and (\ref{238}) with $j = n$ and applying Lemma \ref{zji} with
  $\varphi = \delta \log | G | + \log \Lambda (z, w)  $,
  one concludes finally that
  \begin{equation}
    \lim_{\varepsilon \rightarrow 0} \frac{1}{\pi \varepsilon^2} \int_{{P_n} 
    \times \Delta (\varepsilon)} \frac{R (z, w)}{\Lambda (z, w)} d V (n+1) 
    = \int_{{{P_n} } } \frac{R (z, 0)}{\Lambda (z, 0)} d V (n) < \infty
    \label{2409}
  \end{equation}
  for ${P_n}  \subset \subset S_n$. Equality (\ref{2409}) implies that the left hand
  side of (\ref{2237}), as a function of $w$, is bounded with respect to some subsequence $\{ w_k \}_{k=1}^\infty$ approaching
  to zero. In particular, we obtain from (\ref{2237}) that
$$
    \sup_{w \in \Delta (\rho)} \int_{{P_n} } \frac{R (z, w)}{\Lambda (z, w)} d
    V (n) < \infty,
$$
  and consequently
$$
    \sup_{w \in \Delta (\rho)} \int_{P_n} R (z, w) d V (n) < \infty ,
$$
     which actually confirms (\ref{goal}).
\end{proof}

\section{Proof of main results and applications}\label{sec3}

In this section, we will first give proofs of Theorem \ref{polar} and Theorem \ref{third}.

\begin{proof}
  [Proof of Theorem \ref{polar}] Thanks to
  (\ref{2626}) (which is proved by the same argument as in Proposition \ref{213}) and the definition of relative
  Bergman kernels (\ref{28}), we obtain
  \begin{equation}
    Y_c (\varphi) = \{ (z, w) \in \Delta^n \times \Delta^m ; \log K_{c
    \varphi} (z, w) = - \infty \}. \label{3.1}
  \end{equation}
  Then by using Theorem \ref{2.4} we infer that $Y_c (\varphi)$ is a complete pluripolar set for any $c > 0$. In fact, equality (\ref{3.1}) was already implicitly
  proved and used in {\cite{guancrelle}}.
  
  In order to verify that $X_{c}(\varphi)$ is a pluripolar set, according to Proposition~\ref{2.6}, we just need to show that for some $k \in \mathbb{N}$, $$Y_{c^{- 1}} ((n + k) \varphi_k) 
  \bigcap \{ \xi = o \} = \{(z,w,o_\xi);\log K_{c^{- 1} (n + k) \varphi_k} (z, w, o_{\xi})
  = - \infty\}$$ (see (\ref{interpret})) is a proper (and therefore complete pluripolar by the fact that (\ref{3.1}) is complete pluripolar) subset of $\Delta^n \times \Delta^m$. This is so
  since otherwise 
 $\varphi(z,w)=\varphi_k (z,w,o_\xi) \equiv - \infty$ on $\Delta^n \times \Delta^m$. 
\end{proof}

\begin{proof}
  [Proof of Theorem \ref{third}] Let us first assume that $c = 1$
  without loss of generality. Now consider the following two statements for
  any $m \in \mathbb{N}$:
  
  {\noindent}{{Statement $A_m$.}} $Z_1 (\varphi)$ is an
  analytic subset for any quasi-psh $\varphi \in \mathcal{} {QPSH} (X)$ with analytic singularities and compact complex manifold $Y$, where $f:X\rightarrow Y$ is a surjective holomorphic submersion and $Y$ is of dimension $m$.{\medskip}
  
  {\noindent}{{Statement $B_m$.}} $Z_{1, S} (\varphi)
  := \bigcup_{s \in S} \{z \in X_s; c_z (\varphi | _{X_s}) \leq 1\}$ is an
  analytic subset for any quasi-psh $\varphi \in {QPSH} (X) $ with analytic
  singularities and any (irreducible) projective subvarieties $X$ and $S$, where $f:X\rightarrow S$ (which is precisely induced by a surjective submersion between the ambient manifolds, i.e. there exist ambient projective manifolds $X_{\text{amb}}\supset X$, $S_{\text{amb}} \supset S$ and a holomorphic surjective submersion $f_{\text{amb}}: X_{\text{amb}}\rightarrow S_{\text{amb}}$ so that $f = f_{\text{amb}}|_{X}$) is surjective with smooth fibers and $S$ is of pure dimension $m$. Here $\varphi \in {QPSH} (X) $ has analytic singularities precisely means that $\varphi$ is the restriction of a quasi-psh function with analytic singularities defined on the ambient smooth manifold $X_{\text{amb}}$.{\medskip}
  
  Let us first prove that $A_m$ implies $B_m$.
  
  Actually, we first take a log resolution $\mu :
  \tilde{S} \rightarrow S$ such that $\tilde{S}$ is smooth due to
  Theorem \ref{hiro}. Then the fiber product $X\times_{S} \tilde{S}$ becomes a smooth manifold (since it is the preimage of a submanifold $\tilde{S}$ by an ambient surjective submersion) and the induced map $f':=X\times_{S} \tilde{S}\rightarrow\tilde{S}$ is also a surjective submersion. Let $\mu':= X\times_{S} \tilde{S}\rightarrow X$ be the induced proper mapping. It is clear that the quasi-psh function $\mu'^{\ast} \varphi$ has analytic
  singularities as well on $X\times_{S} \tilde{S}$ by the construction of $\mu$ in Theorem
  \ref{hiro}. Note $\mu'$ also does not change the value of
  $\varphi$ along each fiber $X_s$ (i.e. $\varphi(p) = \mu'^{*}\varphi(p') $ holds for any $s\in S$, $p\in f^{-1}(s)$ and $p' \in \mu'^{-1}(p)$), we obtain that $\mu' (Z_1 (\mu'^{\ast} \varphi))
  = Z_{1,S} (\varphi)$, where $Z_1 (\mu'^{\ast} \varphi)=\bigcup_{s'\in S'}\{p\in (f')^{-1}(s');c_p (\mu'^{\ast} \varphi)\leq 1\}$. Thanks to the assumption $A_m$ we obtain $Z_1 (\mu'^{\ast} \varphi)$ is an analytic subset. Then by Remmert's proper
  mapping theorem, it follows that $Z_{1,S} (\varphi)$ is also an analytic
  subset in $X$.
  
  Next we claim that $B_0, \ldots, B_m$ implies $A_{m + 1}$.
  
As mentioned in Remark \ref{jiaowai1}, we already see that $Y$ is a $m+1$ dimensional projective manifold and $f$ corresponds to a morphism by GAGA. By the fact that $\varphi$ has analytic singularities, there is an algebraic integrally closed ideal sheaf $\mathfrak{a}\subset\mathcal{O}_{X}$ corresponding to $\varphi$ so that $\mathcal{J}(X,\mathfrak{a}^c)^\textbf{An}=\mathcal{I}(\varphi)$ for some $c>0$ and $\mathcal{J}(X,\mathfrak{a}|_{X_y}^c)^\textbf{An}=\mathcal{I}(\varphi|_{X_{y}})$ holds for any $y\in Y$. Hence according to Theorem \ref{imp} and its remark, we obtain that
  \begin{equation}
    \mathcal{I} (\varphi |_{X_y})
    = \mathcal{I} (\varphi) \cdot \mathcal{O }_{X_y} \label{41}
  \end{equation}
  holds for general $y$ outside some analytic subset $W \subset Y$.
  We might assume that $W$ consists of irreducible components $W =
  \bigcup_{i = 1}^N W_i$, and $\dim W_i \leq m$.
  
  Now we need to prove
  \begin{equation}\label{lab}
    Z_1 (\varphi) = F_1 (\varphi)  \bigcup (\bigcup_{i = 1}^N Z_{1, W_i}
    (\varphi)),
  \end{equation}
  which yields the analyticity of $Z_1 (\varphi)$ by our assumption of $B_0,
  \ldots, B_m$ and the analyticity of $F_1 (\varphi)$. Here $F_1 (\varphi):=\{x\in X;c_x (\varphi)\leq 1\}$ and $Z_{1, W_i} (\varphi ) = \bigcup_{w_i \in W_i} \{p \in
  X_{w_i}; c_p (\varphi_{w_i}) \leq 1\}$.
  
  In fact,
  \[ Z_1 (\varphi) = \bigcup_{y \in Y} \{p\in X_y ; c_p (\varphi |_{X_y})
     \leq 1\} = \bigcup_{y\in Y} \{p\in X_y ; \mathcal{I} 
     (\varphi |_{X_y})_p \neq \mathcal{O}_{X_y ,p} \} \]
  \begin{equation}
    =  \{x\in X\backslash f^{-1}(W) ; \mathcal{I}
    (\varphi )_{x} \neq \mathcal{O}_{X,x} \}  \bigcup (\bigcup_{i =
    1}^N Z_{1, W_i} (\varphi)) \label{43}
  \end{equation}
  \begin{equation}
    = F_1 (\varphi)  \bigcup (\bigcup_{i = 1}^N Z_{1, W_i} (\varphi)),
    \label{44}
  \end{equation}
  where the third identity (\ref{43}) is obtained from (\ref{41}) and the
  Ohsawa-Takegoshi $L^2$ extension theorem, and the last identity (\ref{44})
  has applied the restriction formula (cf. Theorem \ref{resc}) or the Ohsawa-Takegoshi $L^2$ extension
  theorem again. Hence (\ref{lab}) has been proved.
  
  Therefore, analyticity of $Z_1 (\varphi)$ in any dimension follows from the easy fact that $B_0 = A_0$
  is an analytic subset and the induction statement $A_m , B_m$ as above.
  
 For the "moreover" part, by Remmert's mapping theorem and the fact that $f(Z_{1}(\varphi))\neq Y$, it follows easily that the last identity in (\ref{sulia}) holds for general $y\in Y\backslash f(Z_{1}(\varphi))$. Thanks to restriction formula of multiplier ideals, the whole equality in (\ref{sulia}) holds for $y\in Y\backslash f(Z_{1}(\varphi))$.
\end{proof}

Among the proof of Theorem \ref{third}, we are essentially using the algebro-geometric log
resolution of singularities due to Hironaka. By contrast, our next
task is to show the proof of Theorem \ref{mainresult} via an analytic
approach without using resolution of singularities. Another advantage is that
it exactly shows which analytic functions in $\mathcal{O}_{n + 1}$ defining each germs of $Y_c (\varphi)$.

\begin{proof}
 [Proof of Theorem \ref{mainresult}] By scaling,
  we may assume that $c = 1$. Since our claim is local, we might also assume
  that $\varphi = c_0 \log ( \sum_{j = 1}^N | G_j |^2) 
  $ for some $c_0 > 0$, $G_j \in \mathcal{O} (\Delta^n_z \times \Delta
  _w)$ and $e^{\varphi}$ is H{\"o}lder continuous on $\Delta^n_z \times \Delta
  _w$. Our mission is to show the analyticity of the germ of $(Y_1 (\varphi),
  o = (o_z, o_w)) .$
  
  According to Proposition \ref{213}, we can write
  
$$
    Y_1 (\varphi) = \bigcup_{w \in \Delta} ( \bigcap_{f_w \in \bigcup_{c
    > 1 } \mathcal{H}_{\Delta^n \times \{ w \}} (c \varphi_w)} f^{- 1}_{{{ }
    }_w} (0) )=: I_1 .
$$
  
  We next define an analytic subset of $\Delta^n_z \times \Delta _w$ by
  \begin{equation}
    I_2 := \bigcap_{F \in \bigcup_{c > 1} \mathcal{H}_{\Delta^n \times
    \Delta } (c \varphi + \log | w |  + \log (- \log  | w  |))_{\mathcal{}}
       } F^{- 1} (0) \label{calim}
  \end{equation}
  and claim that
$$
    (I_1, o) = (I_2, o),
$$
  by which analyticity of $Y_1 (\varphi)$ at $o$ in $\Delta^n \times \Delta $
  follows immediately.
  
   First note that for any $( z  {, w_0}  ) \in
  I_2$, $c > 1$ and $f_{w_0} \in \mathcal{H}_{\Delta^n \times \{ w_0 \}} (c
  \varphi_{w_0})$, Lemma \ref{ote} yields that if $w_0 \neq 0$, then there
  exists
$$
    F \in \mathcal{H}_{\Delta^n \times \Delta} (c \varphi + \log | w |)
    \subset \mathcal{H}_{\Delta^n \times \Delta } (c \varphi + \log | w | +
    \log (- \log  | w |)),    
$$
  such that $F (\cdot, w_0) = f_{w_0}$; if $w_0 = 0$, then there exists
$$
    F \in \mathcal{H}_{\Delta^n \times \Delta } (c \varphi + \log | w | + \log
    (- \log  | w |))    
$$
  such that $F (\cdot, w_0) = f_{w_0}$. In any case, we get that $f_{w_0} (z)
  = F (z, w_0) = 0$ and $(I_2, o) \subset (I_1, o) .$
  
   Conversely, thanks to
  the Noetherian property of the coherent sheaf $\mathcal{O}_{m + n}$, one may
  find a small polydisc $U$ such that
$$
    I_2  \bigcap U = \{ (z, w) \in U ; F_j (z, w) = 0, j = 1, \ldots ., M \} .
$$
  By Proposition \ref{rest}, it implies that for any $F_j \in
  \mathcal{H}_{\Delta^n \times \Delta } (c_j \varphi + \log | w |  + \log (-
  \log | w  |))    $ for some $c_j > 1$,
$$
    F_j (\cdot, o_w) \in \mathcal{H}_{\Delta^n (r) \times \{ o_w \}} (c'_j
    \varphi_{o_w})
$$
  holds for some $c'_j > 1$ and $0 < r < 1$. Now the holomorphic stability Theorem
  \ref{useful} can be applied for $F := (F_1, \ldots ., F_M)$ since
  (\ref{219}) and (\ref{226}) are both valid for the weight $e^{- 2 c''
  \varphi } = (| G_1 |^2 + \cdots + | G_N |^2)^{- c''}  
   $ with some $1 < c'' \in \mathbb{Q} < c_{\min} :=
  \min \{ c'_j \}$ and $1 + \beta = c_{\min} / c''>1$. As a consequence, one may
  find a smaller relative compact polydisc $U' = \Delta^n (r_1) \times
  \Delta  (r_2)$ such that
$$
    w \mapsto \int_{\Delta^n (r_1)} \frac{\sum_{j = 1}^M | F_j (z, w) |^2
     }{e^{2 c'' \varphi}} d V (z)
$$
  is both continuous and finite when $w \in \Delta (r_2)$. Hence the
  inclusion $I_1 \bigcap U' \subset I_2 \bigcap U'$ holds.
\end{proof}

\begin{remark}\label{jinaj}
  Indeed, what we have proved is that $(Y_c (\varphi), (z_0, w_0))$ is nothing
  but the germ of the supporting analytic subset of the coherent analytic sheaf $\text{Supp}
  \mathcal{O}_{n + m} / {Adj}_{\{w = w_0 \}} (c \varphi)$ at $(z_0, w_0)$,
  where ${Adj}_{\{w = w_0 \}} (\varphi)$ is the analytic adjoint ideal
  sheaf (see Definition 2.1 in {\cite{guenancia2}}) with respect to the
  hyperplane $\{w = w_0 \} .$
\end{remark}

We can now eventually present the proof of Corollary \ref{cor1.6} and Corollary~\ref{second}.

\begin{proof}
  [Proof of Corollary \ref{cor1.6}] This is just an immediate consequence of
  (\ref{3.1}) and Theorem \ref{mainresult}.
\end{proof}

\begin{proof}[Proof of Corollary \ref{second}] 
 Let $N=\lceil C\rceil +n+1$. It follows from Theorem \ref{domymath} that $S_{N,A}$ is a priori complete pluripolar subset. We claim that
\begin{equation}\label{calimm}
S_{N,A}=Z_1 (\varphi):=\bigcup_{y\in Y}\{x\in X_y; c_x (\varphi |_{X_y})\leq 1\},
\end{equation}
from which analyticity of $S_{N,A}$ follows immediately by Theorem \ref{third} and the fact that $\varphi$ has analytic singularities. 

By the extremal properties of Bergman kernels, in order to prove (\ref{calimm}), it will be sufficient to show the "$\subset$" direction. In fact, according to Siu's uniform global generation formula (cf. Theorem 6.27 in \cite{demailly-book-hep} or Proposition~1 in \cite{siuin}), for any pseudoeffective line bundle $(L,h)$ on $X_y$ and $y\in Y$, the sheaf $\mathcal{O}(K_{X_y}+(n+1)A+L)\otimes\mathcal{I}(h)$ is generated by its global sections. Take $(L,h)=(\lceil C\rceil A, h_A^{\lceil C\rceil}e^{-2\varphi})$, then we obtain once $\mathcal{I}(\varphi|_{X_y})_p=\mathcal{O}_{X_y}$ for some $p\in X_y$, $K_{X/Y,NA,h_{A}^{N}e^{-2\varphi}}$ will be positive at $p$ from (\ref{chaoshi}). Hence the left hand side in (\ref{calimm}) is contained in the right hand side and the proof is complete now.
\end{proof}

\section{Some counterexamples}\label{sec6}

In this section, we turn to analyze some explicit examples of the
aforementioned subsets. To show that $X_c (\varphi)$ and $Y_c (\varphi)$ are
in general non-analytic, the authors of {\cite{wangxiaoqin}} have constructed
the following example:

\begin{example}\label{buyong}
  (Example 4.3 in {\cite{wangxiaoqin}}) Let
$$
    \varphi (z, w) := \sum_{k = 1}^{\infty} \log (| w - w_k - z^{m_k}
    |^{\alpha_k} + | z |^{\beta_k})
$$
  for $(z, w) \in \Delta^2$. We might choose $w_k =(k + 1)^{-1}$ and
  $\alpha_k = k^{- 2}$, $m_k$ and $\beta_k$ to be determined later. Thus
  $\varphi (z, w)$ has a minorant
$$
    \sum_{k = 1}^{\infty} \alpha_k \log | w - w_k - z^{m_k}  |  \in
    L_{\text{loc}}^1 (\Delta^2)  
$$
  and consequently, is well-defined. It is clear that $\varphi (z, w)$ is not
  continuous at $(0, 0)$ since $\varphi (0, 0) > - \infty$ but $\varphi(0,w_k)=-\infty$. A standard
  computation for Lelong numbers yields that:
$$
    \nu (\varphi_{w_k}, (0,w_k)) = \min (m_k \alpha_k, \beta_k) .
$$
  So if we take for instance $m_k = c \cdot k^2$ and $\beta_k = c + 1$, then
  all $(0, w_k) \in X_c (\varphi)$ but $(0, 0) \notin X_c (\varphi)$. The same
  argument also holds for $Y_c (\varphi)  .$
\end{example}

Before giving the construction of \emph{continuous} plurisubharmonic counterexamples, let us recall the
definition of generalized Cantor sets.

\begin{definition}
  Let $\{s_k \}_{k \geq 1}$ be a sequence of numbers such that $0 < s_k < 1$
  for all $k$. Denote $C (s_1)$ to be the set obtained from $[0, 1]$ by
  removing an open interval with length $s_1$ from the center. By induction,
  let $C (s_1, \ldots, s_k)$ be obtained by removing an open interval whose
  length is a proportion $s_k$ of $C (s_1, \ldots, s_{k - 1})$ from each
  center of small intervals of $C (s_1, \ldots, s_{k - 1})$. Then the
  associated generalized Cantor set is defined to be
$$
    \mathcal{C}= \bigcap_{k = 1}^{\infty} C (s_1, \ldots s_k) .
$$
\end{definition}

Let's recall the construction of such $\varphi$ in \cite{lichi}, which aims at
giving a counterexample to Demailly's approximation conjecture of
plurisubharmonic functions with isolated singularities.

\begin{example}
  {{{{(\cite{lichi})}} }}Let $\mathcal{O}_{\mathbb{P}^1} (- 1)
  := M$ be the tautological line bundle over $\mathbb{P}^1$, or
  equivalently, the blow up of $\mathbb{C}^2$ at the origin. Let $\rho : M
  \rightarrow \mathbb{P}^1$ and $\theta : M \rightarrow \mathbb{C}^2$ be the
  corresponding mappings. If we denote the canonical projection map by $\pi :
  \mathbb{C}^2 \backslash \{ 0 \} \rightarrow \mathbb{P}^1$, then those three
  mappings satisfies: $\rho \circ \theta^{- 1} = \pi$ on $\mathbb{C}^2
  \backslash \{ 0 \}$. Let $\omega_0$ be the normalized ($\int_{\mathbb{P}^1}\omega_0 =1$) Fubini-Study $(1, 1)$
  form defined on $\mathbb{P}^1$, and $u$ a $\omega_0$-psh function to be
  determined.
  
  The classical potential theory implies that there is an one-to-one
  correspondence between the space of $\omega_0$-psh functions and the space
  of probability measures in $\mathbb{P}^1$, which are respectively given by:
$$
    u\in\{\text{$\omega_0$-psh functions}\} \mapsto w_0 + \sqrt{- 1} \partial \overline{\partial} u\in\{\text{probability measures in $\mathbb{P}^1$}\}
$$
  and
$$
    \mu\in\{\text{probability measures in $\mathbb{P}^1$}\} \mapsto 
    p_{\mu}\in\{\text{$\omega_0$-psh functions}\}
$$
  where $p_{\mu}:=2 \pi \int_{\mathbb{P}^1} (- G (z, w)) d \mu (w)$, $G(\cdot,\cdot)$ is the Green function of the Laplace operator of $\omega_0$ (cf. section 3 in \cite{lichi}). One may verify that $w_0 + \sqrt{- 1} \partial \overline{\partial} p_u =
  \mu$.
  
  Now choose a non-decreasing generalized Cantor function $c (x)$ (see (9) in
  \cite{lichi}) associated to any fixed generalized Cantor set
  $\mathcal{C}=\mathcal{C} (s_1, \ldots ., s_k)$ as a distribution function,
  and denote the measure associated to $c (x)$ by $\mu$. Continuity of $c (x)$ implies that $\mu$ has no atoms at any point of
  $\mathcal{C}.$ Let $p_{\mu}$ be the potential of $\mu$, then it is easy to
  see that $p_{\mu}$ is smooth on $\mathbb{P}^1 \backslash \mathcal{C}$, and
  precisely takes the value $- \infty$ at $\mathcal{C}$. If the choice of $\{
  s_k \}$ satisfies certain conditions (Proposition 3.3 in \cite{lichi}),
  then $p_{\mu}$ will be continuous as a function from $\mathbb{P}^1$ to $[-
  \infty, + \infty)$.
  
  Set
  \begin{equation}
    \varphi := \theta^{\ast} ( \max \{ 2 \log | z
    |^2, \log | z |^2 + \pi^{\ast} p_{\mu} \}) \label{4.6}, | z |^2 =
    |z_1 |^2 + |z_2 |^2 
  \end{equation}
  to be a continuous plurisubharmonic function on $M$ by adding
  value $- \infty$ to the exceptional divisor $\mathbb{P}^1$.
\end{example}

\begin{theorem}
  Let $\varphi$ be defined as in {{(\ref{4.6})}}. Fix a small unit polydisc $\Delta_z\times\Delta_w\subset M $ neighborhood of $(0_o, o)$ so that $\rho(z,w)=w$, then $X_c (\varphi)$
  {{(and equivalently $Y_c (\varphi)$, which are defined as subsets of $\Delta_z\times\Delta_w$)}} is not analytic at $(0_o, o)$,
  where $w=o$ is the origin in $\mathbb{C}\subset\mathbb{P}^1$.
\end{theorem}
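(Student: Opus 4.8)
The plan is to compute the fibrewise Lelong numbers of $\varphi$ explicitly in a convenient chart around $(0_o,o)$, to identify $X_c(\varphi)$ (and, via the one‑variable relation between Lelong numbers and complex singularity exponents, $Y_c(\varphi)$) with $\{0\}\times(\mathcal C\cap\Delta_w)$ for $c$ in an appropriate range, and finally to derive non‑analyticity from the elementary facts that the intersection of an analytic set with a complex submanifold is analytic and that a proper analytic subset of a one‑dimensional disc is discrete.

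First I would fix the chart of $M=\mathcal O_{\mathbb P^1}(-1)$ near $(0_o,o)$ in which $\theta(z,w)=(z,zw)$ and $\rho(z,w)=w$; then the exceptional divisor is $\{z=0\}$ and $o$ is the point $w=0$. Recall that the generalized Cantor set underlying Li's construction sits in $[0,1]\subset\mathbb R\subset\mathbb C\subset\mathbb P^1$ with $0\in\mathcal C$, and we take the chart so that $o$ corresponds to $0$; being perfect, $\mathcal C$ accumulates at $o$, and $\mathcal C\subset\mathbb R$ has empty interior in $\mathbb C$. Using $\theta^*(|z_1|^2+|z_2|^2)=|z|^2(1+|w|^2)$ and $(\pi^*p_\mu)\circ\theta(z,w)=p_\mu(w)$, formula $(\ref{4.6})$ becomes, on $\Delta_z\times\Delta_w$,
\[
\varphi(z,w)=\max\bigl\{\,2\log|z|^2+2\log(1+|w|^2),\ \log|z|^2+\log(1+|w|^2)+p_\mu(w)\,\bigr\}.
\]

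Next I would compute $\nu_z(\varphi_w)$ near $(0,0)$. If $z\neq0$ then $\varphi_w$ is finite and continuous near $z$, hence $\nu_z(\varphi_w)=0$. If $z=0$ and $w\notin\mathcal C$ then $p_\mu(w)$ is finite and, for $|z|$ small, the second term in the maximum strictly dominates, so $\varphi_w(z)=\log|z|^2+\mathrm{const}$ and $\nu_0(\varphi_w)=2$. If $z=0$ and $w\in\mathcal C$ then $p_\mu(w)=-\infty$, so $\varphi_w(z)=2\log|z|^2+\mathrm{const}$ and $\nu_0(\varphi_w)=4$. Therefore, for every $c\in(2,4]$,
\[
X_c(\varphi)\cap(\Delta_z\times\Delta_w)=\{0\}\times(\mathcal C\cap\Delta_w).
\]
Moreover each $\varphi_w$ is literally of the form $\gamma\log|z|+\mathrm{const}$ with $\gamma\in\{0,2,4\}$, so in complex dimension one $c_z(\varphi_w)=1/\nu_z(\varphi_w)$, and the same set equals $Y_c(\varphi)\cap(\Delta_z\times\Delta_w)$ for every $c\in[1/4,1/2)$.

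Finally I would derive the contradiction. If $X_c(\varphi)$ (resp. $Y_c(\varphi)$) were analytic at $(0_o,o)$ for such a $c$, then its intersection with the one‑dimensional complex submanifold $\{z=0\}\cong\Delta_w$ would be an analytic subset of $\Delta_w$, namely $\mathcal C\cap\Delta_w$; but a proper analytic subset of the disc $\Delta_w$ is discrete, whereas $\mathcal C\cap\Delta_w$ contains an uncountable neighborhood of $o$ in the perfect set $\mathcal C$ and, since $\mathcal C\subset\mathbb R$, is not all of $\Delta_w$ — a contradiction, which proves the theorem. The continuity and plurisubharmonicity of $\varphi$ on $M$, together with the fact that $p_\mu$ is continuous $\mathbb P^1\to[-\infty,+\infty)$ with polar set exactly $\mathcal C$, are quoted from \cite{lichi}; the only mildly delicate points are the bookkeeping that the maximum defining $\varphi$ does not affect the Lelong‑number computation along the punctured fibers (one of the two competing terms strictly wins near $z=0$) and the reduction of analyticity to the one‑variable case by slicing along $\{z=0\}$. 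I do not anticipate any genuine obstacle.
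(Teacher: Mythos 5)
Your proposal is correct and follows essentially the same route as the paper: compute the fibrewise Lelong numbers at the exceptional divisor, obtaining $4$ on $\mathcal{C}$ and $2$ off it (and $0$ for $z\neq 0$), identify $X_c(\varphi)$ with $\{0\}\times(\mathcal{C}\cap\Delta_w)$ for $c\in(2,4]$, and conclude non-analyticity by slicing along $\{z=0\}$, since a proper analytic subset of a one-dimensional disc cannot be a generalized Cantor set. Your version merely spells out the chart computation and the one-variable relation $c_z=1/\nu_z$ in more detail than the paper does.
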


\begin{proof}
  Let us first note that the line bundle $M$ is trivialized as $\mathbb{C}\times \Delta_w$ when restricted as a fibration over $\Delta_w \subset \mathbb{P}^1$, then for any $w \in
  \mathcal{C}$, $\nu (\varphi_w, (0_w, w)) = 4.$ However, since $p_{\mu}$ is
  smooth on $\mathbb{P}^1 \backslash \mathcal{C}$ we know that for any $w \in
  \Delta_w \backslash \mathcal{C}$, $\nu (\varphi_s, (0_w, w)) = 2.$ This already
  shows the non-analyticity of the germ of $X_3 (\varphi)$ (and $Y_{1 / 3}
  (\varphi)$) at $(0_o,o)$ because $X_3 (\varphi)  \bigcap \Delta_w =\mathcal{C}$ is a generalized
  Cantor set.
\end{proof}

\begin{acknowledgements}
The author would sincerely thank his supervisor Prof. Xiangyu Zhou for the valuable comments and discussions, and the anonymous referee(s) for his/her suggestions to the original version of this paper. Additionally, the author is extremely grateful to Dr. Zhenqian Li for the fruitful suggestions and Prof. Qi'an Guan for the generous support.
\end{acknowledgements}

\end{document}